\renewcommand{\phi}{\varphi}
\renewcommand{\epsilon}{\varepsilon}
\renewcommand{\theta}{\vartheta}
\def\ZZ{{\mathbf Z}}
\def\AAA{{\mathbf A}}
\def\RR{{\mathbf R}}
\def\QQ{{\mathbf Q}}
\def\cO{\mathcal{O}}
\def\fra{\mathfrak{a}}
\def\frb{\mathfrak{b}}
\def\frm{\mathfrak{m}}
\DeclareMathOperator{\Spec}{Spec}
\DeclareMathOperator{\lct}{lct}
\DeclareMathOperator{\ord}{ord}
\DeclareMathOperator{\mult}{mult}
\DeclareMathOperator{\cosupp}{Cosupp}
\DeclareMathOperator{\mld}{mld}
\newcommand{\llbracket}{[\negthinspace[}
\newcommand{\rrbracket}{]\negthinspace]}
\newtheorem{lemma}{Lemma}[section]
\newtheorem{theorem}[lemma]{Theorem}
\newtheorem{proposition}[lemma]{Proposition}
\newtheorem{conjecture}[lemma]{Conjecture}
\theoremstyle{definition}
\newtheorem{remark}[lemma]{Remark}
\theoremstyle{remark}
\newtheorem*{remark*}{Remark}
\newtheorem*{note*}{Note}
\numberwithin{equation}{section}
\begin{document}

\title{A boundedness conjecture for minimal log discrepancies on a fixed germ}

\author[M.~Musta\c{t}\u{a}]{Mircea~Musta\c{t}\u{a}}
\address{Department of Mathematics, University of Michigan, 530 Church Street,
Ann Arbor, MI 48109, USA}
\email{mmustata@umich.edu}

\author[Y.~Nakamura]{Yusuke~Nakamura}
\address{Graduate School of Mathematical Sciences, University of Tokyo, 3-8-1 Komaba,
Meguro-Ku, Tokyo, 153-8914, Japan}
\email{nakamura@ms.u-tokyo.ac.jp}

\dedicatory{Dedicated to Lawrence Ein, on the~occasion of
his~sixtieth~birthday}

\subjclass[2010] {Primary 14B05; Secondary 14J17, 14E15.}
\thanks{The first author 
was partially supported by
 NSF grants DMS-1265256 and DMS-1401227. The second author was partially supported by the Grant-in-Aid for 
 Scientific Research (KAKENHI No. 25-3003) and the Program for Leading Graduate Schools, MEXT, Japan.}
\keywords{Minimal log discrepancy, ascending chain condition.}

\begin{abstract}
We consider the following conjecture: 
on a klt germ $(X,x)$, for every finite set $I$ there is a positive integer $\ell$ with the property that
for every $\RR$-ideal $\fra$ on $X$ with exponents in $I$,
there is a divisor $E$ over $X$ that computes the minimal log discrepancy $\mld_x(X,\fra)$ and such that its
discrepancy $k_E$ is bounded above by $\ell$. 
We show that this implies Shokurov's ACC conjecture for
minimal log discrepancies on a fixed klt germ and give some partial results towards the conjecture.
\end{abstract}

\maketitle

\section{Introduction}

One of the outstanding open problems in birational geometry is the Termination of Flips conjecture, which predicts that there are no infinite chains of
certain birational transformations (\emph{flips}). It is an insight due to Shokurov that this global problem can be reduced to conjectural properties of
invariants of singularities. A typical such property is the Ascending Chain Condition (ACC, for short) which predicts that in a fixed dimension, and with suitable restrictions on
the coefficients of the divisors involved, there are no infinite strictly increasing sequences of such invariants. There are two types of invariants that are important in this setting:
the log canonical thresholds and the minimal log discrepancies. As a rule, log canonical thresholds are easier to study and they are related to many other points of view on singularities.
In particular, Shokurov's ACC conjecture for log canonical thresholds has been proved (see \cite{dFEM2} for the smooth case, \cite{dFEM1} for the case of varieties with bounded singularities,
and \cite{HMX} for the general case). However, while the ACC property in this setting implies the termination of certain families of flips in an inductive setting (see \cite{Birkar} for the precise statement),
it does not allow proving any termination result in arbitrary dimension. It turns out that in order to do this one has to work with minimal log discrepancies (mlds, for short). In fact, Shokurov showed in
\cite{Shokurov} that two conjectural properties of mlds (the Semicontinuity conjecture and the ACC conjecture) imply termination of flips. The Semicontinuity conjecture is believed to be the easier of the two problems.
In fact, this is known in some cases (see \cite{EMY} for the case of smooth varieties and \cite{Nakamura} for the case of varieties with quotient singularities). 
In this paper we propose an approach towards 
Shokurov's ACC conjecture when we only consider mlds on a fixed germ of variety $(X,x)$. In particular, this would cover the case of smooth ambient varieties.

Before stating our main results, let us introduce some notation. We always assume that we work over an algebraically closed field of characteristic $0$. 
Let $X$ be a variety and $x\in X$ a (closed) point. We work with $\RR$-ideals $\fra$, that is, formal products $\fra=\prod_{j=1}^r\fra_j^{\lambda_j}$, where
the $\lambda_j$ are nonnegative real numbers and the $\fra_j$ are nonzero coherent ideals in $\cO_X$. We say that $\fra$ \emph{has exponents in} a set $I\subseteq\RR_{\geq 0}$ if $\lambda_j\in I$ for all $j$. We assume that $X$ is $\QQ$-Gorenstein
and denote by $\mld_x(X,\fra)$ the minimal log discrepancy of $(X,\fra)$ at $x$ (see \S 2 for the definition). This is a nonnegative real number if and only if $(X,\fra)$ is log canonical in some neighborhood of $x$;
otherwise, if $\dim(X)\geq 2$, then $\mld_x(X,\fra)=-\infty$.

In this paper we consider the following boundedness conjecture for mlds on a fixed germ.

\begin{conjecture}\label{conj_main}
Let $X$ be a klt variety and let $x\in X$. Given a finite subset $I\subset \RR_{\geq 0}$, there is a positive integer $\ell$ ${\rm (}$depending on $(X,x)$ and $I$${\rm )}$ such that
for every $\RR$-ideal $\fra$ on $X$ with exponents in $I$, there is a divisor $E$
that computes $\mld_x(X,\fra)$ and such that $k_E\leq \ell$. 
\end{conjecture}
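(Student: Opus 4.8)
The plan is to argue by contradiction, using a generic limit in the spirit of the proofs of the ACC for log canonical thresholds \cite{dFEM1,HMX}. Discard $0$ from $I$ (it contributes only trivial factors) and write $I\cap\RR_{>0}=\{\lambda_1,\dots,\lambda_r\}$, $\lambda_{\min}=\min_j\lambda_j$. Collecting factors with a common exponent, every $\RR$-ideal with exponents in $I$ may be written $\fra=\prod_{j=1}^{r}\fra_j^{\lambda_j}$ with $\fra_j\subseteq\cO_X$ (possibly $\fra_j=\cO_X$). Localizing and then completing at $x$ changes neither $\mld_x(X,\fra)$ nor the divisors over $X$ with center $x$, and for a fixed $\fra$ one may replace each $\fra_j$ by $\fra_j+\frm_x^N$ for $N\gg0$ without changing $\mld_x(X,\fra)$, so we may assume every $\fra_j$ is $\frm_x$-primary. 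Were the conjecture false for $(X,x)$ and $I$, we would obtain tuples $\bigl(\fra_1^{(m)},\dots,\fra_r^{(m)}\bigr)$, $m\ge1$, with each $\fra^{(m)}=\prod_j\bigl(\fra_j^{(m)}\bigr)^{\lambda_j}$ log canonical at $x$ and such that every divisor over $X$ computing $\mld_x(X,\fra^{(m)})$ has discrepancy $>m$.

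For a single $\RR$-ideal the statement is elementary. By the description of minimal log discrepancies at a point of a simple normal crossings pair, applied on a log resolution of $(X,\fra_1\cdots\fra_r\cdot\frm_x)$, if $\fra$ is log canonical at $x$ then $\mld_x(X,\fra)$ is computed by a divisor whose discrepancy is bounded solely in terms of that resolution (this includes the case $\mld_x=0$). Moreover, if a divisor $E$ with $k_E\le\ell$ computes $\mld_x(X,\fra)$, then, writing $a_E(X,\fra)=k_E+1-\ord_E(\fra)$ for the log discrepancy,
\[
\sum_{j=1}^{r}\lambda_j\,\ord_E(\fra_j)=\ord_E(\fra)=k_E+1-\mld_x(X,\fra)\le\ell+1,
\]
so $\ord_E(\fra_j)\le(\ell+1)/\lambda_{\min}$ for all $j$; since $\ord_E(\frm_x)\ge1$, the order $\ord_E(\fra_j)$ is already detected by the truncation $\fra_j+\frm_x^{c}$ once $c>(\ell+1)/\lambda_{\min}$. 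Hence the content of the conjecture is purely the \emph{uniformity} of $\ell$: the existence of one truncation level $c_0=c_0(X,x,I)$ that, for every $\fra$ with exponents in $I$, detects $\mld_x(X,\fra)$ together with a computing divisor of bounded discrepancy. As the set of ideals modulo a fixed power of $\frm_x$ is not finite, this cannot be read off directly.

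This is where the generic limit enters. Fixing a non-principal ultrafilter, let $(\frb_1,\dots,\frb_r)$, defined over a large algebraically closed field $K\supseteq k$, be a generic limit of the tuples $\bigl(\fra_1^{(m)},\dots,\fra_r^{(m)}\bigr)$: then $\frb_j\subseteq\widehat{\cO}_{X_K,x}$ and, for every $c$, there is an isomorphism $\widehat{\cO}_{X_K,x}/\frm_x^{c}\cong\bigl(\widehat{\cO}_{X,x}/\frm_x^{c}\bigr)\otimes_k K$ carrying $\frb_j+\frm_x^{c}$ to $\fra_j^{(m)}+\frm_x^{c}$ for all $m$ in a set belonging to the ultrafilter. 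Since $(X_K,x)$ is again klt, the $\RR$-ideal $\frb=\prod_j\frb_j^{\lambda_j}$, log canonical at $x$, is computed by a divisor $F$ over $X_K$ of some discrepancy $k_F=\ell_0$, and by the previous paragraph both $\mld_x(X_K,\frb)$ and the assertion ``a divisor of discrepancy $\le\ell_0$ computes it'' are detected by the truncations $\frb_j+\frm_x^{c_0}$ for a single fixed $c_0$. One would then hope to transport this back, for $m$ in the ultrafilter, to a bounded divisor computing $\mld_x(X,\fra^{(m)})$, contradicting the hypothesis.

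The hard part is exactly this transport, and it is presumably why the statement is a conjecture: the level $c_0$ is produced by the \emph{limit} object, whereas the divisors forced to have discrepancy $>m$ see $\fra^{(m)}$ far below $\frm_x^{c_0}$; the comparison through the common truncation yields only $\mld_x(X,\fra^{(m)})\le\mld_x(X_K,\frb)$ for $m$ in the ultrafilter, and this inequality can be strict, so the generic limit formed naively discards exactly the data one needs. Getting around this seems to require one of: (i) an a priori ``boundedness of the resolution process'' --- a bound, in terms of $(X,x)$ and $I$ alone, on the number of (weighted) blow-ups needed to extract \emph{some} divisor computing $\mld_x$ --- which is essentially a restatement of the conjecture; (ii) a refined generic limit that remembers the deep part of each $\fra^{(m)}$, e.g.\ by passing to the limit simultaneously along towers of weighted blow-ups or inside the arc space; or (iii) combining the Semicontinuity viewpoint with the Noetherianity of the (finite-type) moduli of ideals modulo $\frm_x^{c}$ to obtain uniformity on constructible strata --- which still requires a uniform bound on the truncation level to get started. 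Short of such an input I would expect to obtain only partial results: the case where $\mld_x(X,\fra)$ is bounded away from $0$ (so $\ord_E(\frm_x)$, hence the needed truncation, is controlled), the case where a bound on the colength of the $\fra_j$ is imposed, the toric and monomial cases, and formal reductions such as monotonicity of $\ell$ in $I$, reduction to a smooth $X$ (modulo the discrepancies of a fixed log resolution), and continuity in the exponents --- presumably the substance of the rest of the paper.
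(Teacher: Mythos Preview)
The statement you address is labeled a \emph{Conjecture} in the paper and is not proved there in full generality; the paper establishes it only for surfaces (Theorem~\ref{dim2}) and for monomial $\RR$-ideals on $\AAA^n$ (Theorem~\ref{monomial_case}), proves the weaker bound on $\ord_E(\frm_x)$ rather than $k_E$ (Theorem~\ref{thm_bound_ord_point}), and shows the conjecture is equivalent to two other open conjectures (Theorem~\ref{thm_equivalence}). Your proposal is accordingly not a proof, and you recognize this yourself.

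Your diagnosis of the obstruction is accurate and coincides with the paper's. The generic limit argument you sketch is exactly how Theorem~\ref{thm_bound_ord_point} is proved: form the limit $\widetilde{\fra}$, take a divisor $F$ over $\widetilde{X}$ computing $\mld_{\widetilde{x}}(\widetilde{X},\widetilde{\fra})$, and transport it via Proposition~\ref{properties_generic_limit}~iv) to divisors $F_i$ over $X$ with $k_{F_i}=k_F$ and $\ord_{F_i}(\frm_x)=\ord_F(\frm_{\widetilde{x}})$ computing the mld of the \emph{truncated} ideals. The gap you name --- that one only obtains $\mld_x(X,\fra^{(i)})\le \mld_{\widetilde{x}}(\widetilde{X},\widetilde{\fra})$, possibly strictly, so $F_i$ need not compute $\mld_x(X,\fra^{(i)})$ --- is precisely the Generic Limit conjecture (Conjecture~\ref{conjecture:generic_limit}), and your option~(iii) is essentially the Ideal-adic Semicontinuity conjecture (Conjecture~\ref{ideal_adic}); Theorem~\ref{thm_equivalence} shows both are equivalent to Conjecture~\ref{conj_main}. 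For the weaker invariant $\ord_E(\frm_x)$ the paper sidesteps the gap in the case $\mld_{\widetilde{x}}(\widetilde{X},\widetilde{\fra})>0$ by a different device: choose $\delta>0$ with $\lct_{\widetilde{x}}(\widetilde{X},\widetilde{\fra}\cdot\frm_{\widetilde{x}}^\delta)=1$, use ACC for lct to force $\lct_x(X,\fra^{(i)}\cdot\frm_x^\delta)\ge1$ for infinitely many $i$, and deduce $\ord_{E_i}(\frm_x)\le \mld_x(X)/\delta$ for \emph{every} computing divisor $E_i$. This argument gives no control on $k_{E_i}$, which is why only the $\ord_E(\frm_x)$ bound survives.

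Two remarks on your list of expected partial results. The case ``$\mld_x(X,\fra)$ bounded away from $0$'' does not obviously yield a bound on $k_E$; the argument above bounds only $\ord_E(\frm_x)$ in that regime. And you do not anticipate the surface case, which the paper treats by an entirely different method --- convexity of log discrepancies along chains of exceptional curves on the minimal resolution, combined with Kawakita's discreteness result --- rather than by generic limits.
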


We use the theory of generic limits of ideals
developed in \cite{dFM}, \cite{Kollar1}, and \cite{dFEM1} to show the weaker statement in which we bound the order along $E$ of the ideal defining
the point $x\in X$ (we expect this result to be useful for attacking the 
above conjecture). More precisely, we show the following:

\begin{theorem}\label{thm_bound_ord_point}
Let $X$ be a klt variety and $x\in X$ a point defined by the ideal $\frm_x$. For every finite subset $I\subset \RR_{\geq 0}$, there is a positive integer $\ell$ ${\rm (}$depending on $(X,x)$ and $I$${\rm )}$ such that the following conditions hold:
\begin{enumerate}
\item[i)] For every $\RR$-ideal $\fra$ with exponents in $I$ such that $\mld_x(\fra)>0$ and {\bf every} divisor $E$ over $X$ that computes $\mld_x(X,\fra)$, we have
$\ord_E(\frm_x)\leq\ell$.
\item[ii)] For every $\RR$-ideal $\fra$ with exponents in $I$ such that $\mld_x(\fra)\leq 0$, there is {\bf some} divisor $E$ over $X$ that computes $\mld_x(X,\fra)$ and such that
$\ord_E(\frm_x)\leq\ell$. 
\end{enumerate}
\end{theorem}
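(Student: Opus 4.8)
The plan is to argue by contradiction. If no such $\ell$ exists then, after separating cases i) and ii), one gets an infinite sequence of $\RR$-ideals $\fra^{(m)}=\prod_j(\fra^{(m)}_j)^{c_j}$ with exponents in $I$ (grouping factors with equal exponent, the $c_1,\dots,c_r$ are fixed and positive) together with divisors $E_m$ over $X$ with $x\in c_X(E_m)$, bad in the sense of either (i-bad) $\mld_x(X,\fra^{(m)})>0$, $E_m$ computes it, and $\ord_{E_m}(\frm_x)\to\infty$, or (ii-bad) $\mld_x(X,\fra^{(m)})\le 0$ while every divisor computing $\mld_x(X,\fra^{(m)})$ has order $>m$ along $\frm_x$; the two cases are treated separately and the final $\ell$ is the larger of the two bounds. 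I would then apply the theory of generic limits of ideals (\cite{dFM}, \cite{Kollar1}, \cite{dFEM1}): after passing to a subsequence, completing at $x$, and using that the truncations $\fra^{(m)}_j+\frm_x^N$ lie in Hilbert schemes of finite type, this yields a field extension $K\supseteq k$ and an $\RR$-ideal $\widetilde\fra=\prod_j\widetilde\fra_j^{\,c_j}$ on the (still klt, $\QQ$-Gorenstein, excellent) germ $(X_K,x)$ with $\widetilde\fra_j\equiv\fra^{(m)}_j\ (\mathrm{mod}\ \frm_x^N)$ for all $j$ and all $m$ in an infinite set $S_N$, for each $N$.

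\textbf{Preliminaries.} Two observations are used throughout. First, if $(X,\fra)$ is log canonical at $x$ then $\ord_{\frm_x}(\fra_j)\le M$ for a constant $M=M(X,x,I)$: comparing with a fixed divisor $F_0$ over $X$ with center $x$ (say one extracted by blowing up $\frm_x$), $\fra_j\subseteq\frm_x^s$ forces $a_{F_0}(X,\fra)\le A_X(F_0)-c_j s\,\ord_{F_0}(\frm_x)$, which is negative for $s$ large. Together with Izumi's inequality on $(X,x)$ this gives, for \emph{every} divisor $E$ with center $x$, $\ord_E(\fra_j)\le C_1(X,x)\,M\,\ord_E(\frm_x)$ whenever $(X,\fra)$ is lc at $x$; hence for a fixed $N>C_1 M$, replacing each $\fra^{(m)}$ by its truncation modulo $\frm_x^N$ changes neither $\mld_x$ nor the set of divisors computing it, in case (i). Second, by the (now proved) ACC conjecture for log canonical thresholds (\cite{dFEM2}, \cite{dFEM1}, \cite{HMX}) together with the fact that the generic limit realizes infinitely many of the log canonical thresholds of the sequence (\cite{dFM}), after a further subsequence we may assume $(X_K,\widetilde\fra)$ is klt at $x$ exactly when each $(X,\fra^{(m)})$ is.

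\textbf{Case $\mld_x\le 0$.} Here $(X_K,\widetilde\fra)$ is not klt at $x$. A log resolution of $(X_K,\widetilde\fra\cdot\frm_x)$ — which takes place over the \emph{fixed} germ $(X_K,x)$, hence is bounded — carries a divisor $\widetilde E$ with $x\in c_{X_K}(\widetilde E)$, $a_{\widetilde E}(X_K,\widetilde\fra)\le\mld_x(X_K,\widetilde\fra)$, and $k_{\widetilde E}$, $\ord_{\widetilde E}(\frm_x)$, $\ord_{\widetilde E}(\widetilde\fra_j)$ all fixed finite numbers. Spreading $\widetilde E$ out over a finitely generated $k$-algebra and specializing at closed points (whose residue field is $k$, as $k=\overline k$), as in \cite{dFEM1}, produces for infinitely many $m$ a divisor $E'_m$ over $X$ with $x\in c_X(E'_m)$, $\ord_{E'_m}(\frm_x)=\ord_{\widetilde E}(\frm_x)$ and $a_{E'_m}(X,\fra^{(m)})=a_{\widetilde E}(X_K,\widetilde\fra)$ — the congruence $\widetilde\fra_j\equiv\fra^{(m)}_j\ (\mathrm{mod}\ \frm_x^N)$, for $N$ exceeding the $\ord_{\widetilde E}(\widetilde\fra_j)$, forces the orders to agree. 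Thus $E'_m$ computes $\mld_x(X,\fra^{(m)})$ with $\ord_{E'_m}(\frm_x)$ bounded by a fixed number, contradicting (ii-bad) for $m$ large.

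\textbf{Case $\mld_x>0$.} Now $(X_K,\widetilde\fra)$ is klt at $x$, so $\mld_x(X_K,\widetilde\fra)>0$, and I must rule out the $E_m$; the difficulty — which I expect to be the main obstacle — is that one has to bound $\ord(\frm_x)$ along \emph{every} computing divisor, although a priori the discrepancies $k_{E_m}$ are unbounded. Set $N_m=\ord_{E_m}(\frm_x)\to\infty$ and $\widehat v_m=\ord_{E_m}/N_m$, so $\widehat v_m(\frm_x)=1$; by the first preliminary $\widehat v_m(\fra^{(m)}_j)\le C_1 M$, and from $A_X(\widehat v_m)=\mld_x(X,\fra^{(m)})/N_m+\sum_j c_j\widehat v_m(\fra^{(m)}_j)$ the $A_X(\widehat v_m)$ are also bounded. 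The idea is to take the generic limit of the \emph{pairs} $(\fra^{(m)},E_m)$ over one common parameter space: besides the truncated ideals, record the valuation ideals $\{f:\ord_{E_m}(f)\ge\ell\}$ for $\ell\le KN_m$ (with $K>C_1M$ fixed), which all contain $\frm_x^K$ and so vary in a bounded family; reparametrizing by $\ell/N_m$ and passing to the generic fibre yields a valuation $\widetilde v$ over $(X_K,x)$ with $\widetilde v(\frm_x)=1$. Since the ideals and the valuation are now limits over the same base, for $m$ in a dense open set one gets $\widehat v_m(\fra^{(m)}_j)=\widetilde v(\widetilde\fra_j)$ exactly, and by lower semicontinuity of the log discrepancy $A_{X_K}(\widetilde v)\le\liminf_m A_X(\widehat v_m)$; hence
\[
a_{\widetilde v}(X_K,\widetilde\fra)=A_{X_K}(\widetilde v)-\sum_j c_j\,\widetilde v(\widetilde\fra_j)\le\liminf_m\bigl(\mld_x(X,\fra^{(m)})/N_m\bigr)=0,
\]
contradicting the klt-ness of $(X_K,\widetilde\fra)$ (for a klt pair every valuation centered at $x$ has positive log discrepancy). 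The crux here is the verification that $A_X$ and the orders $\ord_\bullet(\fra_j)$ pass to this \emph{joint} generic limit with the stated (in)equalities: naively base-changing each $\widehat v_m$ to $X_K$ fails, because a base-changed valuation does not see $\widetilde\fra$ with the correct multiplicity, so the limiting process for the ideals and that for the normalized computing valuations must be carried out simultaneously.
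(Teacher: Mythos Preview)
Your treatment of the case $\mld_x\le 0$ is essentially the paper's: take the generic limit, observe it is not log canonical (using ACC for lct to exclude $\lct=1$ as a limit from below), pick a divisor $\widetilde E$ with $a_{\widetilde E}\le 0$, and spread it out. One small imprecision: you only get $a_{E'_m}(X,\fra^{(m)})\le a_{\widetilde E}(X_K,\widetilde\fra)$, not equality, since truncation can only increase the log discrepancy; but $\le 0$ is all that is needed.

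The real divergence, and the real problem, is in the case $\mld_x>0$. Your plan is to take a \emph{joint} generic limit of the ideals and of the normalized valuations $\widehat v_m=\ord_{E_m}/\ord_{E_m}(\frm_x)$, obtain a limit valuation $\widetilde v$, and derive $a_{\widetilde v}(X_K,\widetilde\fra)\le 0$. Several steps here are not justified and would each require substantial work. First, the construction itself: you record the valuation ideals $\{f:\ord_{E_m}(f)\ge\ell\}$ for $\ell\le KN_m$, but the number of such ideals grows with $m$, so this is not a generic limit of a fixed finite tuple of ideal sequences; it is unclear what parameter space you are working over or what $\widetilde v$ is (divisorial? quasi-monomial? merely a semivaluation?). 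Second, the claimed exact equality $\widehat v_m(\fra^{(m)}_j)=\widetilde v(\widetilde\fra_j)$ for generic $m$ is asserted without argument; even in well-behaved limits of valuations one typically only gets inequalities. Third, lower semicontinuity of the log discrepancy $A$ under your limiting process is highly nontrivial: results of this type exist for weak limits on a fixed Berkovich space, but your limit involves a field extension and a nonstandard construction. Finally, your ``second preliminary'' asserts that the generic limit is klt exactly when each term is; but $\mld_x(X,\fra^{(m)})>0$ for all $m$ does not preclude $\mld_{\widetilde x}(\widetilde X,\widetilde\fra)=0$ (klt is an open condition, not a closed one, in this direction), and you give no argument for this sub-case.

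The paper avoids all of this with a short trick that you are missing. One never needs to take a limit of the divisors $E_m$. If the generic limit pair $(\widetilde X,\widetilde\fra)$ has $\mld_{\widetilde x}>0$, choose $\delta>0$ with $\lct_{\widetilde x}(\widetilde X,\widetilde\fra\cdot\frm_{\widetilde x}^{\delta})=1$ (Proposition~\ref{prop1}). Since $\lct_{\widetilde x}$ is a limit point of the $\lct_x(X,\fra^{(m)}\cdot\frm_x^{\delta})$ and this set satisfies ACC, infinitely many $m$ have $\lct_x(X,\fra^{(m)}\cdot\frm_x^{\delta})\ge 1$. For each such $m$ and for \emph{every} divisor $E$ computing $\mld_x(X,\fra^{(m)})$, log canonicity of $(X,\fra^{(m)}\cdot\frm_x^{\delta})$ gives
\[
\mld_x(X,\fra^{(m)})=a_E(X,\fra^{(m)})\ge \delta\cdot\ord_E(\frm_x),
\]
so $\ord_E(\frm_x)\le \mld_x(X)/\delta$. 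This directly bounds the order for \emph{all} computing divisors, which is exactly what part i) demands, with no valuation-limit machinery. The sub-case $\mld_{\widetilde x}=0$ is handled, as in your $\mld\le 0$ argument, by spreading out a divisor $F$ with $a_F=0$ to get $a_{F_i}(X,\fra^{(i)})\le 0$, contradicting $\mld_x(X,\fra^{(i)})>0$.
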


In a related direction, we also show that if $I$ is a finite set and $(X,x)$ is fixed, then there is a positive integer $\ell$ such that for every $\RR$-ideal $\fra$ on $X$ with exponents in $I$,
in order to check that $(X,\fra)$ is log canonical at $x$ it is enough to check that $a_E(X,\fra)\geq 0$ for all divisors $E$ with center $x$ and with $k_E\leq\ell$ (see 
Proposition~\ref{prop_LC}). This result admits a nice consequence concerning the characterization of log canonical pairs in terms of jet schemes
(see Proposition~\ref{consequence_jet_schemes}).

As farther evidence for the conjecture, we handle the two-dimensional case and the case of monomial ideals.

\begin{theorem}\label{dim2}
Conjecture~\ref{conj_main} holds if $\dim(X)=2$.
\end{theorem}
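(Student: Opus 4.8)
The plan is to pass to the minimal resolution of the germ $(X,x)$, transfer the problem to a smooth surface, and then analyze how the log discrepancy of a divisor evolves along its blow-up sequence, using Theorem~\ref{thm_bound_ord_point} to control the ``vertical'' part of that sequence and a generic-limit argument for the ``horizontal'' part.

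\emph{Reduction to a smooth surface.} Let $\pi\colon Y\to X$ be the minimal resolution, with exceptional prime divisors $E_1,\dots,E_n$ forming a reduced simple normal crossings curve with tree-shaped dual graph; the discrepancies $a_\ell:=a_{E_\ell}(X)$ lie in $(-1,0]$ and depend only on $(X,x)$. After replacing $Y$ by a fixed further blow-up, depending only on $(X,x)$, we may assume $\frm_x\cO_Y=\cO_Y(-\sum_\ell m_\ell E_\ell)$ with integers $m_\ell\ge1$. Given $\fra=\prod_j\fra_j^{\lambda_j}$ with exponents in $I$ — merging factors with equal exponent, so $j$ runs over at most $|I|$ indices — write $\pi^*\fra_j\cdot\cO_Y=\frb_j\cdot\cO_Y(-\sum_\ell c_{\ell j}E_\ell)$ with $\frb_j$ having no component of $\bigcup_\ell E_\ell$ in its cosupport, put $\frb=\prod_j\frb_j^{\lambda_j}$, and set $\delta_\ell=\sum_j\lambda_jc_{\ell j}-a_\ell\ge0$ and $\Delta=\sum_\ell\delta_\ell E_\ell+\frb$. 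A standard discrepancy computation gives, for every divisor $E$ over $X$ with $c_X(E)=\{x\}$,
\[
a_E(X,\fra)=a_E(Y,\Delta),\qquad a_E(X)=a_E(Y)+\textstyle\sum_\ell a_\ell\,\ord_E(E_\ell),
\]
so $\mld_x(X,\fra)$ is the infimum of $a_E(Y,\Delta)$ over divisors $E$ over $Y$ whose center is a closed point of $\bigcup_\ell E_\ell$. Since $a_\ell\in(-1,0]$ and $\sum_\ell\ord_E(E_\ell)\le\sum_\ell m_\ell\ord_E(E_\ell)=\ord_E(\frm_x)$, we get $a_E(Y)-\ord_E(\frm_x)\le a_E(X)\le a_E(Y)$. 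By Theorem~\ref{thm_bound_ord_point} there is a constant $\ell_0=\ell_0(X,x,I)$ and a divisor $E$ computing $\mld_x(X,\fra)$ (a witness, if $\mld_x(X,\fra)\le0$) with $\ord_E(\frm_x)\le\ell_0$. It therefore suffices to bound, for such an $E$ of minimal possible length, the length of its blow-up sequence from $Y$.

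\emph{The blow-up sequence.} Let $Y=S_0\leftarrow S_1\leftarrow\dots\leftarrow S_M$ be the blow-up sequence of such a minimal $E$, with $E\subset S_M$ the last exceptional divisor. The centers $p_i=c_{S_i}(E)$ form a chain: $p_i$ lies on the exceptional curve $F_i\subset S_i$ of the $i$-th blow-up, and $p_0$ lies on some $E_\ell$. Call step $i$ \emph{vertical} if $p_{i-1}$ lies on the strict transform of some $E_\ell$, and \emph{horizontal} otherwise. Each vertical step raises $\sum_\ell\ord_\bullet(E_\ell)$ by at least $1$, and this quantity is $\le\ord_E(\frm_x)\le\ell_0$, so there are at most $\ell_0$ vertical steps. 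Every $F_i$ has $c_X(F_i)=\{x\}$, hence $a_{F_i}(Y,\Delta)=a_{F_i}(X,\fra)\ge\mld_x(X,\fra)$; moreover, since $\ord_\bullet(\frm_x)$ is nondecreasing along the chain, minimality of $M$ forces $a_{F_i}(Y,\Delta)\ge0$ for $i<M$. Using this together with the fact that strict-transform multiplicities do not increase along the chain, a short computation with the blow-up formula for log discrepancies gives, at a horizontal step $i$,
\[
a_{F_i}(Y,\Delta)-a_{F_{i-1}}(Y,\Delta)\ \ge\ 1-\textstyle\sum_j\lambda_j\,\mult_{p_{i-1}}(\tld\frb_j)\ \ge\ 1-\textstyle\sum_j\lambda_j\,\mult_{p_0}(\frb_j)\ \ge\ 0,
\]
the last inequality because $0\le a_{F_1}(Y,\Delta)\le1-\sum_j\lambda_j\,\mult_{p_0}(\frb_j)$. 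Thus $a_\bullet(Y,\Delta)$ is nondecreasing along any maximal run of consecutive horizontal steps; in particular the last step of $E$ is vertical, for otherwise $F_{M-1}$ would be a shorter divisor computing (or witnessing) the mld. Hence $M$ equals the number of vertical steps, at most $\ell_0$, plus the total length of the horizontal runs, and it remains only to bound the latter.

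\emph{The main obstacle.} A long horizontal run can genuinely occur: it happens exactly where $\sum_j\lambda_j\,\mult_p(\tld\frb_j)=1$ at every center, for instance when $\frb$ contains an arbitrarily high cusp $\{u^p=v^q\}$ carrying the weight $1/p$. What must be shown is that a \emph{bounded} divisor already realizes the value attained at the end of such a run. The argument I would use is a generic-limit argument in the spirit of the proof of Theorem~\ref{thm_bound_ord_point}: if the minimal achievable length were unbounded over $\RR$-ideals with exponents in $I$, pass to a subsequence and take generic limits of the ideals $\frb_j$, together with limits of the finitely many coefficients $\delta_\ell$, to obtain a limit pair on $Y$; here two-dimensionality enters through the classical fact that the minimal log discrepancy of a surface pair is always \emph{computed by a divisor} (of finite length), and the theory of generic limits of \cite{dFM}, \cite{Kollar1}, \cite{dFEM1} then propagates that divisor back to divisors computing the mlds of the pairs in the subsequence, of uniformly bounded length — contradicting the choice of the subsequence. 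Alternatively one can argue directly: a long run as above is self-similar, and careful bookkeeping of the multiplicity sequences of the $\frb_j$ along the chain, using crucially that $I$ is finite, shows that the relevant value is already realized on a bounded initial segment. Making either argument precise — in particular controlling the interaction of several ideals $\frb_j$ whose singular branches overlap — is the crux; the remainder is the discrepancy bookkeeping on $Y$ carried out above.
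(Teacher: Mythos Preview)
Your proof is incomplete, and the key idea is missing. First, the monotonicity claim along horizontal runs fails at satellite points: if $p_{i-1}\in F_{i-1}\cap\widetilde F_k$ for some earlier $k$ (a horizontal but non-free point), the blow-up formula gives $a_{F_i}-a_{F_{i-1}}=a_{F_k}-\mult_{p_{i-1}}(\widetilde\frb)$, and you only know $a_{F_k}\ge 0$, not $\ge 1$. More fundamentally, you yourself acknowledge that bounding the horizontal runs is the crux and do not carry it out. Your proposed generic-limit approach is essentially circular: propagation from the limit pair back to the sequence only controls the mld of the \emph{truncated} pairs $\prod_j(\fra_j^{(i)}+\frm_x^d)^{\lambda_j}$, and closing the gap between that and the mld of the original pair is exactly the Generic Limit Conjecture~\ref{conjecture:generic_limit}, which the paper shows (Theorem~\ref{thm_equivalence}) is equivalent to what you are trying to prove. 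The ``classical fact'' that a surface mld is computed by some divisor holds in every dimension and supplies no two-dimensional leverage.

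The paper's argument is quite different and direct. After handling the non-lc case via Proposition~\ref{prop_LC} and reducing to $\mld_x(X,\fra)\in[0,1]$, one shows that every $(X_n\to X)$-exceptional divisor has log discrepancy in $[0,1]$. The key step is a \emph{convexity} inequality (Proposition~\ref{proposition:conv}): along any chain of adjacent exceptional curves starting from the mld-computing divisor $E_{n-1}$, with each intermediate curve of self-intersection $\le -2$, the log discrepancies are strictly increasing. Combined with Kawakita's discreteness theorem---the set of possible values $a_F(X,\fra)\in[0,1]$ over all $\fra$ with exponents in $I$ is a fixed finite set $U(I)$---this bounds the length of any such chain by $\#U(I)$; a graph-theoretic lemma then bounds $n$, and an elementary estimate gives $k_{E_{n-1}}\le 2^{n-1}$.
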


\begin{theorem}\label{monomial_case}
Conjecture~\ref{conj_main} holds if $(X,x)=(\AAA^n,0)$ and $\fra$ is a monomial $\RR$-ideal.
\end{theorem}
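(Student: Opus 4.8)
The plan is to translate everything into toric combinatorics and then run an arithmetic argument exploiting the finiteness of $I$. Since $(\AAA^n,0)$ and every monomial ideal are invariant under the torus action, $\mld_0(\AAA^n,\fra)$ is computed by a monomial valuation $v_w$ for some $w\in\ZZ^n_{>0}$, the strict positivity of the coordinates being forced by the center being $0$; for such a valuation $a_{v_w}(\AAA^n,\fra)=\langle w,\mathbf 1\rangle-\sum_j\lambda_j v_w(\fra_j)=:f(w)$ and $k_{v_w}=\langle w,\mathbf 1\rangle-1$, where $\mathbf 1=(1,\dots,1)$ and $v_w(\fra_j)=\min\{\langle w,u\rangle: u\in P(\fra_j)\}$ is the support function of the Newton polyhedron $P(\fra_j)$. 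After grouping the factors with equal exponent and replacing each $\fra_j$ by its integral closure (which changes neither $v_w(\fra_j)$ nor $P(\fra_j)$), I may assume the number of factors is at most $\#I$ and that each $P(\fra_j)$ is a Newton polyhedron. Thus the theorem reduces to: there is $\ell=\ell(n,I)$ such that, whenever $(\AAA^n,\fra)$ is log canonical at $0$, the convex piecewise–linear function $f$ attains its minimum over $\ZZ^n_{>0}$ at some $w$ with $\langle w,\mathbf 1\rangle\le\ell+1$, and otherwise $f(w)<0$ for some such $w$. The non–log–canonical case is immediate from Proposition~\ref{prop_LC}, so from now on assume $(\AAA^n,\fra)$ is log canonical at $0$; equivalently $\mathbf 1\in\sum_j\lambda_j P(\fra_j)=:P(\fra)$, equivalently $f\ge 0$ on $\RR^n_{\ge 0}$.

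Let $w^{*}$ be a minimizer of $f$ on $\ZZ^n_{>0}$ with $\langle w^{*},\mathbf 1\rangle$ minimal, put $A:=\max_k w^{*}_k$ and fix $i_0$ with $w^{*}_{i_0}=A$; it suffices to bound $A$ in terms of $n$ and $I$, since then $k_{v_{w^{*}}}=\langle w^{*},\mathbf 1\rangle-1\le nA-1$. Assume $A\ge 2$, and set $J=\{k:w^{*}_k\ge 2\}$. Comparing $f(w^{*})$ with $f$ at the smaller lattice point $\hat w:=w^{*}-\sum_{k\in J}e_k\in\ZZ^n_{>0}$ and using minimality of the coordinate sum gives $f(\hat w)>f(w^{*})$, hence $\sum_j\lambda_j\bigl(v_{w^{*}}(\fra_j)-v_{\hat w}(\fra_j)\bigr)>\#J$; estimating $v_{w^{*}}(\fra_j)\le\langle w^{*},u_j\rangle=v_{\hat w}(\fra_j)+\sum_{k\in J}(u_j)_k$ for a vertex $u_j\in P(\fra_j)$ computing $v_{\hat w}(\fra_j)$, this forces $\sum_{k\in J}c_k>\#J$, where $c_k:=\sum_j\lambda_j(u_j)_k$. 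On the other hand, log canonicity at $\hat w$ gives $\langle\hat w,c-\mathbf 1\rangle\le 0$ with $c=(c_1,\dots,c_n)$, and since $\hat w_k=1$ for $k\notin J$ while $c_k\ge 0$, this yields $\sum_{k\in J}\hat w_k(c_k-1)\le n$. The decisive input is arithmetic: the numbers $c_k$ lie in the finitely generated additive subsemigroup $S\subseteq\RR_{\ge 0}$ generated by $I$, and because $I$ is finite, $S\cap(1,1+\varepsilon)$ is finite for every $\varepsilon>0$; hence there is $\delta=\delta(I)>0$ with $S\cap(1,\infty)\subseteq[1+\delta,\infty)$. Granting that $c_{i_0}>1$ and that all the terms $\hat w_k(c_k-1)$ with $k\in J$ are nonnegative, we get $(A-1)\delta\le\hat w_{i_0}(c_{i_0}-1)\le\sum_{k\in J}\hat w_k(c_k-1)\le n$, so $A\le 1+n/\delta$. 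This also covers the case $\mld_0(\AAA^n,\fra)=0$, since only minimality of the coordinate sum was used.

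The main obstacle is exactly the parenthetical "granting that": the vertices $u_j$ realizing the support functions depend on the point at which they are evaluated, so for a large coordinate $k\in J$ the relevant optimal vertices of the $P(\fra_j)$ may all have small $k$-th coordinate, giving $c_k<1$ and destroying the nonnegativity of the terms $\hat w_k(c_k-1)$; and there is likewise no a priori reason for $c_{i_0}>1$ at $\hat w$. Overcoming this is the technical heart of the proof. The approach I would take is to run the comparison iteratively rather than against a single perturbation: successively discard those large coordinates $k$ along which $f$ is locally nonincreasing in the $-e_k$ direction (equivalently, for which the optimal vertices "do not see $w_k$"), and restrict attention to the coordinate subspace spanned by the remaining coordinates. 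On that subspace the Newton–polyhedron and log–canonicity conditions descend — essentially an adjunction-type reduction along the vanishing coordinate hyperplanes — and in the resulting face one does obtain $c_{i_0}>1$ together with the needed nonnegativity, so the estimate of the second paragraph applies there. Checking that this process stabilizes and that the descended data still controls $\mld_0(\AAA^n,\fra)$ is where the real work lies; everything else is formal.
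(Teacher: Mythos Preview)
Your argument is incomplete, and you say so yourself: the ``granting that'' step is the whole proof. You correctly derive $\sum_{k\in J}c_k>\#J$ and $\sum_k \hat w_k(c_k-1)\le 0$, but from these alone you cannot isolate the term $\hat w_{i_0}(c_{i_0}-1)$: nothing forces $c_{i_0}>1$, and the negative contributions from coordinates $k\in J$ with $c_k<1$ are weighted by $\hat w_k$, which can be as large as $A-1$, so they can swamp any single positive term. Your proposed fix --- iteratively discarding coordinates along which $f$ is locally non-increasing in the $-e_k$ direction --- does not get off the ground at $w^*$: by minimality of $w^*$ we have $f(w^*-e_k)>f(w^*)$ for \emph{every} $k\in J$, so no such coordinate exists there, and once you move to a different base point the minimality hypothesis is lost. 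The adjunction-type reduction you allude to would have to be made precise (which face, which restricted pair, why the bound on that face controls the original $A$), and as written it is a hope rather than an argument.

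The paper's proof avoids all of this combinatorics. It argues by contradiction: take a sequence $\fra_m=\prod_j\fra_{m,j}^{\lambda_j}$ with no bounded computing divisor; by Maclagan's theorem that antichains of monomial ideals are finite, pass to a subsequence with $\fra_{m,j}\supseteq\fra_{m+1,j}$ for all $m,j$; then $\mld_0(\AAA^n,\fra_m)$ is non-increasing, and by Kawakita's finiteness of mlds on a fixed germ with exponents in a finite set, pass to a further subsequence on which the mlds are constant; finally any divisor $E$ computing $\mld_0(\AAA^n,\fra_1)$ satisfies $a_E(\AAA^n,\fra_m)\le a_E(\AAA^n,\fra_1)=\mld_0(\AAA^n,\fra_1)=\mld_0(\AAA^n,\fra_m)$, so $E$ computes every $\mld_0(\AAA^n,\fra_m)$, a contradiction. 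This is a few lines and uses no toric geometry beyond the statement of Maclagan's theorem. Your approach, if it could be completed, would have the advantage of giving an explicit $\ell(n,I)$, which the paper's compactness argument does not; but as it stands it is not a proof.
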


Our interest in the above conjecture is motivated by the following connection with 
Shokurov's ACC conjecture for minimal log discrepancies. Recall that a subset $I\subseteq\RR$ \emph{satisfies ACC} (\emph{DCC}) if 
it contains no infinite strictly increasing (resp., decreasing) sequences.

\begin{theorem}\label{thm_acc}
Let $X$ be a klt variety and $x\in X$ be a point such that the assertion in Conjecture~\ref{conj_main} holds for $(X,x)$ and for every finite subset $I\subset \RR_{\geq 0}$.
For every fixed DCC set
$J\subset\RR_{\geq 0}$, the set
$$\{\mld_x(X,\fra)\mid \fra\,\,\text{is}\,\,\text{an}\,\,\RR\text{-ideal on}\,\,X\,\,\text{with exponents in}\,\,J, \,(X,\fra)\,\,\text{is log canonical around}\,\,x\}$$
satisfies ACC.
\end{theorem}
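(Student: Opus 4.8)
The plan is to argue by contradiction. Suppose the displayed set contains a strictly increasing sequence $m_i=\mld_x(X,\fra_i)$, where $\fra_i=\prod_j\fra_{i,j}^{\lambda_{i,j}}$ has exponents in $J$ and $(X,\fra_i)$ is log canonical around $x$; since minimal log discrepancies are bounded above (by $\dim X$), the limit $m:=\lim_i m_i=\sup_i m_i$ exists and $m_i<m$ for all $i$, and after discarding the at most one term with $m_i=0$ we may assume all $m_i>0$, hence $m>0$. The whole point will be to replace the $\fra_i$, after passing to a subsequence, by $\RR$-ideals whose exponents lie in a single \emph{finite} set $I$, so that the uniform constant $\ell$ produced by Conjecture~\ref{conj_main} and by Proposition~\ref{prop_LC} becomes available.

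I would first make two harmless reductions. Discarding each factor $\fra_{i,j}$ with $\fra_{i,j}\not\subseteq\frm_x$ or $\lambda_{i,j}=0$ changes neither the germ nor $\mld_x(X,\fra_i)$, so we may assume all $\fra_{i,j}\subseteq\frm_x$ and all $\lambda_{i,j}>0$. A DCC subset of $\RR_{\ge0}$ has its positive part bounded below by some $\delta_0>0$, so $\lambda_{i,j}\ge\delta_0$; fixing once and for all a divisor $F_0$ over $X$ with center $x$, log canonicity gives $a_{F_0}(X,\fra_i)\ge0$, i.e.\ $\sum_j\lambda_{i,j}\ord_{F_0}(\fra_{i,j})\le k_{F_0}+1$, and since $\ord_{F_0}(\fra_{i,j})\ge\ord_{F_0}(\frm_x)\ge1$ the number of factors is at most $(k_{F_0}+1)/\delta_0$. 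Passing to a subsequence we may assume this number is a constant $r$ and, for each $j$, that $(\lambda_{i,j})_i$ is monotone; as $J$ contains no infinite strictly decreasing sequence it is then non-decreasing, with limit $\lambda_j:=\sup_i\lambda_{i,j}\ge\delta_0$, and $\epsilon_i:=\sum_{j=1}^r(\lambda_j-\lambda_{i,j})\to0$. Set $\fra_i^\flat:=\prod_{j=1}^r\fra_{i,j}^{\lambda_j}$; it has exponents in the finite set $I:=\{\lambda_1,\dots,\lambda_r\}$.

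Now I would split into two cases according to whether $\fra_i^\flat$ is log canonical at $x$. If it is, for infinitely many $i$, restrict to that subsequence and apply Conjecture~\ref{conj_main} to $I$: there is a constant $\ell$ and, for each such $i$, a divisor $E_i$ computing $m_i^\flat:=\mld_x(X,\fra_i^\flat)$ with $k_{E_i}\le\ell$. From $\lambda_j\ord_{E_i}(\fra_{i,j})\le\sum_l\lambda_l\ord_{E_i}(\fra_{i,l})=k_{E_i}+1-m_i^\flat\le\ell+1$ (using $m_i^\flat\ge0$) we get $\ord_{E_i}(\fra_{i,j})\le(\ell+1)/\delta_0$, so $m_i^\flat=k_{E_i}+1-\sum_j\lambda_j\ord_{E_i}(\fra_{i,j})$ ranges over a fixed finite set as $i$ varies ($k_{E_i}$ lies in the finite set $\tfrac1d\ZZ\cap(-1,\ell]$ with $d$ the Gorenstein index, the $\ord_{E_i}(\fra_{i,j})$ are bounded, and $r$ is fixed). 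On the other hand $m_i^\flat\le m_i$ because $\fra_i^\flat$ has the larger exponents, while $m_i\le a_{E_i}(X,\fra_i)=m_i^\flat+\sum_j(\lambda_j-\lambda_{i,j})\ord_{E_i}(\fra_{i,j})\le m_i^\flat+(\ell+1)\epsilon_i/\delta_0$, so $m_i^\flat\to m$. A sequence lying in a finite set and converging to $m$ is eventually equal to $m$, hence $m_i^\flat=m$ for $i\gg0$, contradicting $m_i^\flat\le m_i<m$.

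Finally, if $\fra_i^\flat$ fails to be log canonical at $x$ for all large $i$, apply Proposition~\ref{prop_LC} to $I$: there is $\ell$ such that the non-log-canonicity of $\fra_i^\flat$ (whose exponents lie in $I$) is witnessed by a divisor $F_i$ with center $x$, with $k_{F_i}\le\ell$ and $a_{F_i}(X,\fra_i^\flat)<0$. Since $a_{F_i}(X,\fra_i)\ge m_i\ge0$ and $k_{F_i}\le\ell$, again $\sum_j\lambda_{i,j}\ord_{F_i}(\fra_{i,j})\le k_{F_i}+1\le\ell+1$, so $\ord_{F_i}(\fra_{i,j})\le(\ell+1)/\delta_0$, and therefore
\[
m_i\;\le\;a_{F_i}(X,\fra_i)\;=\;a_{F_i}(X,\fra_i^\flat)+\sum_{j=1}^r(\lambda_j-\lambda_{i,j})\,\ord_{F_i}(\fra_{i,j})\;<\;\frac{\ell+1}{\delta_0}\,\epsilon_i .
\]
As $\epsilon_i\to0$ this forces $m_i\to0$, contradicting $m_i\to m>0$. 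The two reductions and the first case are essentially bookkeeping once one hits on $\fra_i^\flat$ as the right comparison object; the step I expect to be the genuine obstacle is this last one, namely that $\fra_i^\flat$ need not be log canonical, so that Conjecture~\ref{conj_main} says nothing about it — this is exactly the situation Proposition~\ref{prop_LC} (finiteness of the divisors one must test for log canonicity) is designed to handle, promoting ``not log canonical with exponents in a finite set'' to ``not log canonical, witnessed by a divisor of bounded discrepancy'', hence of bounded order along the $\fra_{i,j}$.
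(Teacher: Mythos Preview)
Your argument is correct, and the overall architecture---replace the exponents by their limits $\lambda_j$, then exploit that the resulting $\RR$-ideals have exponents in a finite set---is the same as the paper's. The execution, however, differs in two places worth noting.

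First, the paper never splits into your two cases. Instead it proves directly that $(X,\fra_i^\flat)$ is log canonical at $x$ for all $i\gg0$, by invoking ACC for log canonical thresholds on a fixed germ (the theorem from \cite{dFEM1}): one checks that $\lct_x(X,\fra_i)-\lct_x(X,\fra_i^\flat)\to0$, and since $\lct_x(X,\fra_i)\ge1$ for all $i$ and the set of thresholds satisfies ACC, one cannot have $\lct_x(X,\fra_i^\flat)<1$ infinitely often. This eliminates your Case~2 altogether. Your route via Proposition~\ref{prop_LC} is a legitimate alternative and stays closer to the tools developed inside the paper, though of course the proof of Proposition~\ref{prop_LC} itself uses ACC for thresholds.

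Second, to conclude that the $m_i^\flat$ lie in a finite set, the paper quotes Kawakita's discreteness theorem \cite[Theorem~1.2]{Kawakita2}, whereas you argue directly that $k_{E_i}$ lies in $\tfrac{1}{d}\ZZ\cap(-1,\ell]$ and the $\ord_{E_i}(\fra_{i,j})$ are bounded integers. Your direct argument is perfectly fine here (and is essentially how one would prove the relevant special case of Kawakita's result anyway, given the bound on $k_{E_i}$).

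A tiny remark: the upper bound on the $m_i$ is most cleanly stated as $m_i\le\mld_x(X)$ rather than $\dim X$; the latter is true but is itself a theorem, while the former is immediate.
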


We show that Conjecture~\ref{conj_main} is equivalent to two other conjectures on minimal log discrepancies. One of these is (a uniform version of) the Ideal-adic Semicontinuity conjecture for mlds
(see Conjecture~\ref{ideal_adic} for the precise formulation). This has been studied by Kawakita and various partial answers have been obtained in \cite{Kawakita4}, \cite{Kawakita3}, and \cite{Kawakita1}. The other conjecture is the Generic Limit conjecture on minimal log discrepancies, also studied by Kawakita in \cite{Kawakita2} (see Conjecture~\ref{conjecture:generic_limit}). 

\begin{theorem}\label{thm_equivalence}
Conjectures~\ref{conj_main}, \ref{conjecture:generic_limit}, and \ref{ideal_adic} are equivalent.
\end{theorem}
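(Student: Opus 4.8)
The plan is to establish the two equivalences $\ref{conj_main}\Leftrightarrow\ref{ideal_adic}$ and $\ref{ideal_adic}\Leftrightarrow\ref{conjecture:generic_limit}$, using throughout the obvious monotonicity: if $\fra_j\subseteq\fra'_j$ for all $j$ then $a_E(X,\fra)\le a_E(X,\fra')$ for every divisor $E$, and hence $\mld_x(X,\fra)\le\mld_x(X,\fra')$. For $\ref{conj_main}\Rightarrow\ref{ideal_adic}$, fix a finite $I$; we may assume $\mu:=\min(I\setminus\{0\})>0$, the case $I\subseteq\{0\}$ being trivial, and we fix $\ell$ large enough that both Conjecture~\ref{conj_main} and Proposition~\ref{prop_LC} hold with this value for $(X,x)$ and $I$. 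Suppose first that $(X,\fra)$ is log canonical at $x$, and pick a divisor $E$ computing $\mld_x(X,\fra)\ge 0$ with $k_E\le\ell$; from $a_E(X,\fra)\ge 0$ we get $\sum_j\lambda_j\ord_E(\fra_j)\le k_E+1\le\ell+1$, hence $\ord_E(\fra_j)\le(\ell+1)/\mu$ for every $j$ with $\lambda_j>0$. Therefore, with $N>(\ell+1)/\mu$ and $\fra':=\prod_j(\fra_j+\frm_x^N)^{\lambda_j}$, we have $\ord_E(\fra_j+\frm_x^N)=\ord_E(\fra_j)$ for all $j$, so $a_E(X,\fra')=a_E(X,\fra)=\mld_x(X,\fra)$, which together with monotonicity forces $\mld_x(X,\fra')=\mld_x(X,\fra)$. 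If instead $(X,\fra)$ is not log canonical at $x$, Proposition~\ref{prop_LC} furnishes a divisor $E$ with $k_E\le\ell$ and $a_E(X,\fra)<0$, and a short case analysis (according to whether some $\ord_E(\fra_j)$ exceeds $N\ord_E(\frm_x)$) shows $a_E(X,\fra')<0$ as well for $N>(\ell+1)/\mu$, so log canonicity at $x$ is likewise preserved. Taking $N$ just past this single threshold gives the uniform constant of Conjecture~\ref{ideal_adic}.

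The converse $\ref{ideal_adic}\Rightarrow\ref{conj_main}$ is where Theorem~\ref{thm_bound_ord_point} enters decisively. Let $N$ be the uniform constant of Conjecture~\ref{ideal_adic} for $(X,x)$ and $I$, let $\ell$ be the constant of Theorem~\ref{thm_bound_ord_point}, and set $\fra'=\prod_j(\fra_j+\frm_x^N)^{\lambda_j}$, so that $\mld_x(X,\fra')=\mld_x(X,\fra)$. Since $\fra_j+\frm_x^N\supseteq\frm_x^N$, every divisor $E$ satisfies $\ord_E(\fra_j+\frm_x^N)\le N\ord_E(\frm_x)$. By whichever case of Theorem~\ref{thm_bound_ord_point} applies to $\fra'$ there is a divisor $E$ computing $\mld_x(X,\fra')$ with $\ord_E(\frm_x)\le\ell$, and then, since $a_E(X,\fra')\le\dim X$ in all cases,
\[
k_E=a_E(X,\fra')-1+\sum_j\lambda_j\ord_E(\fra_j+\frm_x^N)\le (\dim X-1)+N\ell\sum_j\lambda_j,
\]
a bound depending only on $(X,x)$ and $I$. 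Finally, in the log canonical case $a_E(X,\fra)\le a_E(X,\fra')=\mld_x(X,\fra')=\mld_x(X,\fra)$ while also $a_E(X,\fra)\ge\mld_x(X,\fra)$ since $E$ has center $x$, so $E$ computes $\mld_x(X,\fra)$; and in the non-log-canonical case $a_E(X,\fra)\le a_E(X,\fra')<0$, so again $E$ computes $\mld_x(X,\fra)=-\infty$. In either case $k_E$ is bounded as above.

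For the second equivalence we use the generic limit construction of \cite{dFM}, \cite{Kollar1}, \cite{dFEM1}, together with the invariance of $\mld_x$ under extension of the ground field. Given a sequence $\{\fra^{(m)}\}$ of $\RR$-ideals with exponents in $I$, let $\fra_\infty$ over $X_K$ be a generic limit; by construction, for each fixed $N$ one has $\fra_\infty\equiv\fra^{(m)}\pmod{\frm_x^N}$ for all $m$ in a suitable infinite set of indices. For $\ref{ideal_adic}\Rightarrow\ref{conjecture:generic_limit}$ we take $N$ to be the uniform constant of Conjecture~\ref{ideal_adic}, so that $\mld_x(X_K,\fra_\infty)=\mld_x(X,\fra^{(m)})$ for those $m$, which is Conjecture~\ref{conjecture:generic_limit}. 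For the reverse, suppose the uniform statement \ref{ideal_adic} failed: there would be sequences $\{\fra^{(m)}\}$, $\{\frb^{(m)}\}$ with exponents in $I$ and integers $N_m\to\infty$ with $\fra^{(m)}_j\equiv\frb^{(m)}_j\pmod{\frm_x^{N_m}}$ for all $j$ but $\mld_x(X,\fra^{(m)})\ne\mld_x(X,\frb^{(m)})$ for every $m$. Passing to a subsequence along which both generic limits exist, they coincide---the construction depends only on arbitrarily high truncations, which eventually agree---so Conjecture~\ref{conjecture:generic_limit} gives $\mld_x(X,\fra^{(m)})=\mld_x(X_K,\fra_\infty)=\mld_x(X,\frb^{(m)})$ for at least one $m$, a contradiction. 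Composing $\ref{conjecture:generic_limit}\Rightarrow\ref{ideal_adic}\Rightarrow\ref{conj_main}$ then closes the chain.

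I expect the generic limit half to be the main obstacle: one must pin down the precise sense in which $\fra_\infty$ approximates $\fra^{(m)}$ modulo powers of $\frm_x$, and---more delicately---check that the uniform constant of Conjecture~\ref{ideal_adic} is legitimately available over the (possibly very large) field $K$ over which $\fra_\infty$ is defined, i.e.\ that the bounds involved do not depend on the ground field. The other subtle point is the bookkeeping in the non-log-canonical case: ``computing $\mld_x$'' must there be interpreted as exhibiting a divisor of negative log discrepancy, and it is Proposition~\ref{prop_LC}, not the definition of the mld, that carries the uniform bound in that situation.
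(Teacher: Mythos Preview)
Your implication $\ref{conj_main}\Rightarrow\ref{ideal_adic}$ matches the paper's argument essentially verbatim. Your direct argument for $\ref{ideal_adic}\Rightarrow\ref{conj_main}$ is correct and is a genuine simplification relative to the paper: the paper does not prove this implication directly but instead closes a $3$-cycle $\ref{conj_main}\Rightarrow\ref{ideal_adic}\Rightarrow\ref{conjecture:generic_limit}\Rightarrow\ref{conj_main}$. Your argument isolates the key computation of the paper's last step (bounding $k_E$ via Theorem~\ref{thm_bound_ord_point} once the truncation equality $\mld_x(X,\fra')=\mld_x(X,\fra)$ is known) and observes that Conjecture~\ref{ideal_adic} already provides that truncation equality, so the detour through generic limits is unnecessary for this direction. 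Two small points: you should normalize so that the $\lambda_j$ are the distinct nonzero elements of $I$ (so $r\le |I|$ and $\sum_j\lambda_j$ is genuinely bounded in terms of $I$), and you may replace $\dim X$ by $\mld_x(X)$ in your bound to avoid invoking an extra inequality.

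The generic limit half is where your sketch has a real gap, and it is precisely the one you flag. The assertion ``$\fra_\infty\equiv\fra^{(m)}\pmod{\frm_x^N}$'' is not literally meaningful: the generic limit lives over a field extension $K$ of $k$, so there is no common ambient ring in which to compare the two. What is actually available (Remark~\ref{remark_properties_generic_limit}) is equality of the \emph{minimal log discrepancies of the truncations}, not a congruence of ideals. To pass from truncated to untruncated mld on the $\widetilde{X}$ side you would therefore need Conjecture~\ref{ideal_adic} on $(\widetilde{X},\widetilde{x})$, with a constant you have no a priori control over. The paper sidesteps this entirely via Remark~\ref{remark:oneineq}: one simply picks a divisor $E$ over $\widetilde{X}$ computing $\mld_{\widetilde{x}}(\widetilde{X},\widetilde{\fra})$ and takes the truncation level $\ell$ large enough that $\ell\cdot\ord_E(\frm_{\widetilde{x}})>\ord_E(\widetilde{\fra}_j)$ for all $j$; this forces equality of truncated and untruncated mld on the $\widetilde{X}$ side without any appeal to Conjecture~\ref{ideal_adic} there. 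Conjecture~\ref{ideal_adic} is then used only on $X$ (over $k$) to undo the truncation on the sequence side. With this fix your $\ref{ideal_adic}\Rightarrow\ref{conjecture:generic_limit}$ becomes exactly the paper's.

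Your extra direction $\ref{conjecture:generic_limit}\Rightarrow\ref{ideal_adic}$ is not in the paper (the paper's cycle makes it redundant once you have $\ref{ideal_adic}\Rightarrow\ref{conj_main}$), and as written it needs more care than you indicate. You must run the generic limit construction simultaneously for the $2r$ sequences $(\fra_j^{(m)}),(\frb_j^{(m)})$ to conclude $\widetilde{\fra}_j=\widetilde{\frb}_j$, and then applying Conjecture~\ref{conjecture:generic_limit} to the $\fra$-part gives an infinite $S$ on which only the $\widetilde{\fra}_j$ are guaranteed to remain generic limits; you must argue separately (or strengthen the statement) that the $\widetilde{\frb}_j$ are also generic limits of the restricted $\frb$-sequences before applying the conjecture a second time. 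This can be arranged from the construction in \cite{dFEM1}, but it is not automatic from the conjecture as stated. Given that your direct $\ref{ideal_adic}\Rightarrow\ref{conj_main}$ already works, the cleaner route is to drop this direction and instead close the cycle $\ref{conj_main}\Rightarrow\ref{ideal_adic}\Rightarrow\ref{conjecture:generic_limit}\Rightarrow\ref{conj_main}$ as the paper does, importing the paper's argument (via Remark~\ref{remark:oneineq} and Theorem~\ref{thm_bound_ord_point}) for the last arrow.
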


The paper is organized as follows. In \S 2 we recall the definition and some basic facts related to minimal log discrepancies. The following section 
is devoted to a review of generic limits and to the proof of Theorem~\ref{thm_bound_ord_point}. In \S 4 and \S 5 we prove Theorems~\ref{dim2} and \ref{monomial_case},
respectively. In \S 6  we prove
Theorems~\ref{thm_acc} and in \S 7 we prove Theorem~\ref{thm_equivalence}.

\subsection*{Acknowledgments}
We would like to thank Dale Cutkosky, Atsushi Ito, Mattias Jonsson, Masayuki Kawakita, Pierre Milman, and Michael Temkin for some useful discussions in connection with this work. 
We are especially indebted to Masayuki Kawakita for pointing out an error in an earlier version of this paper. 

It is a pleasure to dedicate this paper to Lawrence Ein, on the occasion of his sixtieth birthday. Lawrence's work has had a profound influence on the understanding of singularities of algebraic varieties and their
role in geometry. The first author, in particular, was introduced to this area through their conversations and collaboration. He would like to express his thanks and admiration.

\section{Minimal log discrepancies: definition and basic facts}

In this section we review the definition of minimal log discrepancies and set up the notation that we will use later in the paper.
For more details and for the proofs of some of the facts that we state, we refer to \cite{Ambro}.

We work over an algebraically closed  ground field, of characteristic $0$. Let $X$ be a variety (always assumed to be reduced and irreducible).
A \emph{divisor over} $X$ is a prime divisor $E$ on some normal variety $Y$, proper and birational over $X$. Such a divisor defines a discrete
valuation $\ord_E$ of the function field of $X$ and we identify two divisors if they give the same valuation. The image of $E$ on $X$ is the \emph{center} of $E$ on $X$
and it is denoted by $c_X(E)$. For a nonzero coherent ideal sheaf $\fra$ on $X$, one defines $\ord_E(\fra)$ as follows. If $E$ is a prime divisor on $Y$ and $t$ is a uniformizer of the DVR $\cO_{Y,E}$,
then we can write $\fra\cdot\cO_{Y,E}=(t^e)$ for some nonnegative integer $e$ and $\ord_E(\fra):=e$. Note that $\ord_E(\fra)>0$ if and only if $c_X(E)\subseteq\cosupp(\fra)$, where 
$\cosupp(\fra)$ is the support of $\cO_X/\fra$. 

Let $X$ be a normal variety. One says that $X$ is $\QQ$-Gorenstein if the canonical divisor $K_X$ is $\QQ$-Cartier. In this case, for every
proper, birational morphism $f\colon Y\to X$, with $Y$ normal, we consider the discrepancy divisor $K_{Y/X}$. If $E$ is a divisor over $X$ that appears
as a prime divisor on $Y$, then we denote by $k_E$ the coefficient of $E$ in $K_{Y/X}$ (this is independent of the choice of model $Y$). 

Recall that an $\RR$-ideal on $X$ is a formal product $\fra=\prod_{j=1}^r\fra_j^{\lambda_j}$, where each $\fra_j$ is a nonzero coherent ideal sheaf on $X$ and each $\lambda_j$ is a nonnegative real number. 
Given such $\fra$ and a divisor $E$ over $X$, we put
$$\ord_E(\fra):=\sum_{j=1}^r\lambda_j\cdot\ord_E(\fra_j).$$
If $\fra=\prod_{j=1}^r\fra_j^{\lambda_j}$ and $\frb=\prod_{i=1}^s\frb_i^{\mu_i}$ are two $\RR$-ideals and $\delta$ is a positive real number,
then we define the ideals 
$$\fra\cdot\frb:=\prod_{j=1}^r\fra_j^{\lambda_j}\cdot\prod_{i=1}^s\frb_i^{\mu_i}$$
and
$$\fra^{\delta}:=\prod_{j=1}^r\fra_j^{\delta\lambda_j}.$$
It is clear that in this case, if $E$ is a divisor over $X$, then $\ord_E(\fra\cdot \frb)=\ord_E(\fra)+\ord_E(\frb)$
and $\ord_E(\fra^{\delta})=\delta\cdot\ord_E(\fra)$.

Suppose now that
$X$ is  normal and $\QQ$-Gorenstein and $\fra$ is an $\RR$-ideal on $X$. For every
divisor $E$ over $X$, the \emph{log discrepancy} of $E$ with respect to $(X,\fra)$ is
$$a_E(X,\fra):=k_E+1-\ord_E(\fra).$$
The pair $(X,\fra)$ is \emph{log canonical} (\emph{klt}) if and only if 
$a_E(X,\fra)\geq 0$ (respectively, $>0$) for every divisor $E$ over $X$. When $\fra=\cO_X$, one simply says that
$X$ is log canonical (respectively, klt).

Consider a pair $(X,\fra)$, with $X$ a normal, $\QQ$-Gorenstein variety and $\fra$ an $\RR$-ideal on $X$. 
For every (closed) point $x\in X$, the \emph{minimal log discrepancy} of $(X,\fra)$ is given by
$$\mld_x(X,\fra):=\inf\{a_E(X,\fra)\mid E\,\,\text{is a divisor over}\, X\,\text{with}\,c_X(E)=x\}.$$
It is a basic fact that $\mld_x(X,\fra)\geq 0$ if and only if $(X,\fra)$ is log canonical in a neighborhood of $x$.
Moreover, if $\mld_x(X,\fra)<0$ and $\dim(X)\geq 2$, then $\mld_x(X,\fra)=-\infty$. 
One can also show that if $\mld_x(X,\fra)\geq 0$, then the infimum in the definition is in fact a minimum. 
Under this assumption, we say that a divisor $E$ over $X$ \emph{computes} $\mld_x(X,\fra)$ if $c_X(E)=x$
and $a_E(X,\fra)=\mld_x(X,\fra)$. When $\mld_x(X,\fra)<0$, we will say that $E$ computes $\mld_x(X,\fra)$
if $c_X(E)=x$ and $a_E(X,\fra)<0$. 

Recall that if $\fra$ is a nonzero ideal on $X$, then a \textit{log resolution} of $(X,\fra)$ is a proper, birational
morphism $\pi\colon Y\to X$ such that $Y$ is a smooth variety, the exceptional locus ${\rm Exc}(\pi)$ is a divisor,
$\fra\cdot\cO_Y=\cO_Y(-F)$ for some effective divisor
$F$ on $Y$, and $F+{\rm Exc}(\pi)$ has simple normal crossings. 
Since we are in characteristic $0$, log resolutions exist by Hironaka's theorem.
It is a basic result that if $X$ is a normal, $\QQ$-Gorenstein
variety, $x\in X$, and $\fra=\prod_{j=1}^r\fra_j^{\lambda_j}$ is an $\RR$-ideal on $X$, then for every log resolution $\pi\colon Y\to X$
of $(X,\frm_x\cdot\prod_{j=1}^r\fra_j)$, there is a divisor $E$ on $Y$ which computes $\mld_x(X,\fra)$.

\begin{proposition}\label{prop1}
Let $X$ be a normal, $\QQ$-Gorenstein variety, $\fra$ an $\RR$-ideal on $X$, and $x\in X$ a point defined by $\frm_x$.
If $\mld_x(X,\fra)>0$, then there is $\delta>0$ such that we have $\mld_x(X,\fra\cdot\frm_x^{\delta})=0$.
\end{proposition}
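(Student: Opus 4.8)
The plan is to reduce the statement to a fixed log resolution and then run a one-variable intermediate value argument in the parameter $\delta$. First I would fix a log resolution $\pi\colon Y\to X$ of $(X,\frm_x\cdot\prod_{j=1}^r\fra_j)$; the same $Y$ is a log resolution of $(X,\frm_x\cdot\prod_{j=1}^r\fra_j\cdot\frm_x)$ as well, since $\frm_x$ and $\frm_x^2$ have the same cosupport, so the basic fact recalled before the statement applies to the pair $(X,\fra\cdot\frm_x^\delta)$ with this same $Y$, for every $\delta\geq 0$: some prime divisor of $Y$ with center $x$ computes $\mld_x(X,\fra\cdot\frm_x^\delta)$. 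Let $E_1,\dots,E_m$ be the finitely many prime divisors on $Y$ with $c_X(E_i)=x$ (a nonempty set, by that fact). I would extract two consequences. First, if $\mld_x(X,\fra\cdot\frm_x^\delta)<0$ then the divisor on $Y$ computing it is some $E_i$ with $a_{E_i}(X,\fra\cdot\frm_x^\delta)<0$; taking the contrapositive, $\mld_x(X,\fra\cdot\frm_x^\delta)\geq 0$ as soon as $a_{E_i}(X,\fra\cdot\frm_x^\delta)\geq 0$ for all $i$. Second, whenever $\mld_x(X,\fra\cdot\frm_x^\delta)\geq 0$ it equals $\min_{1\leq i\leq m}a_{E_i}(X,\fra\cdot\frm_x^\delta)$, since the computing $E_i$ realizes this minimum while each $a_{E_j}$ is bounded below by $\mld_x$.

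Next I would study the function $g\colon\RR_{\geq 0}\to\RR$ given by
$$g(\delta):=\min_{1\leq i\leq m}\bigl(a_{E_i}(X,\fra)-\delta\cdot\ord_{E_i}(\frm_x)\bigr),$$
which by the additivity $a_E(X,\fra\cdot\frm_x^\delta)=a_E(X,\fra)-\delta\cdot\ord_E(\frm_x)$ is just $\min_i a_{E_i}(X,\fra\cdot\frm_x^\delta)$. Each $E_i$ has center $x\subseteq\cosupp(\frm_x)$, hence $\ord_{E_i}(\frm_x)\geq 1$, so $g$ is continuous, piecewise affine with all slopes at most $-1$, strictly decreasing, and tends to $-\infty$. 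Moreover $g(0)=\min_i a_{E_i}(X,\fra)=\mld_x(X,\fra)>0$, by the hypothesis together with the second consequence above applied to $\fra$. Therefore $g$ has a unique zero $\delta_0$, and $\delta_0>0$.

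Finally, for every $\delta\in[0,\delta_0]$ we have $g(\delta)\geq 0$, that is $a_{E_i}(X,\fra\cdot\frm_x^\delta)\geq 0$ for all $i$; the first consequence then gives $\mld_x(X,\fra\cdot\frm_x^\delta)\geq 0$, and the second gives $\mld_x(X,\fra\cdot\frm_x^\delta)=g(\delta)$. Specializing to $\delta=\delta_0$ yields $\mld_x(X,\fra\cdot\frm_x^{\delta_0})=g(\delta_0)=0$, which is the assertion. I expect no real obstacle: once the problem is transported to the fixed resolution $Y$, everything is elementary. The only step that genuinely uses the quoted facts (and not merely the additivity of log discrepancies and the bound $\ord_{E_i}(\frm_x)\geq 1$) is the passage from knowing that all $a_{E_i}\geq 0$ on $Y$ to concluding $\mld_x\geq 0$, i.e. the guarantee that at the critical value $\delta_0$ the pair has not already fallen out of the log canonical range; this is exactly the first consequence, a contrapositive of the log-resolution fact.
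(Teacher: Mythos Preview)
Your proof is correct and follows essentially the same approach as the paper: both fix a log resolution $\pi\colon Y\to X$ of $(X,\frm_x\cdot\prod_j\fra_j)$ and determine $\delta$ from the finitely many exceptional divisors $E_i$ over $x$. The paper simply writes down the answer $\delta=\min_i\frac{a_{E_i}(X,\fra)}{\ord_{E_i}(\frm_x)}$, which is precisely the unique zero $\delta_0$ of your piecewise-affine function $g$; your intermediate-value argument is just a more detailed verification of why that value works.
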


\begin{proof}
Let $\pi\colon Y\to X$ be a log resolution of $(X,\frm_x\cdot\prod_{j=1}^r\fra_j)$, where $\fra=\prod_{j=1}^r\fra_j^{\lambda_j}$. 
We see that we may take 
$$\delta=\min\left\{\frac{a_E(X,\fra)}{\ord_E(\frm_x)}\ \bigg| \  E\,\,\text{divisor on}\,\,Y\,\,\text{with}\,\,c_X(E)=x\right\}.$$
\end{proof}

In what follows we will also make use of the notion of log canonical threshold. Suppose that $X$ is a log canonical variety and $x\in X$.
If $\fra$ is an $\RR$-ideal on $X$, then the \emph{log canonical threshold} of $(X,\fra)$ at $x$ is given by
$$\lct_x(X, \fra):=\inf\left\{\frac{k_E+1}{\ord_E(\fra)}\ \bigg| \  E\,\,\text{divisor over}\,\,X\,\,\text{with}\,\, x\in c_X(E)\right\}.$$
In fact, if $\fra=\prod_{j=1}^r\fra_j^{\lambda_j}$ and $\pi\colon Y\to X$ is a log resolution of $(X,\prod_{j=1}^r\fra_j)$, then
there is a divisor $E$ on $Y$ that computes $\lct_x(X,\fra)$, that is, $\lct_x(X,\fra)=(k_E+1)/\ord_E(\fra)$ and $x\in c_X(E)$. 
Note that we have $\mld_x(X,\fra)\geq 0$ if and only if $\lct_x(X,\fra)\geq 1$. 

We collect in the next proposition a few well-known properties of minimal log discrepancies and log canonical thresholds.
The proof is straightforward and we omit it.

\begin{proposition}\label{general_properties}
Let $X$ be a log canonical variety and let $x\in X$ be defined by $\frm_x$. If $\fra_1,\ldots,\fra_r,\frb_1,\ldots,\frb_r$ are nonzero ideals on $X$
and $\lambda_1,\ldots,\lambda_r,\mu_1,\ldots,\mu_r$ are nonnegative real numbers, then the following hold:
\begin{enumerate}
\item[i)] If $\fra_j\subseteq \frb_j$ for every $j$, then 
$$\mld_x(X,\fra_1^{\lambda_1}\dotsm\fra_r^{\lambda_r})\leq \mld_x(X,\frb_1^{\lambda_1}\dotsm\frb_r^{\lambda_r})\quad\text{and}\quad \lct_x(X,\fra_1^{\lambda_1}\dotsm\fra_r^{\lambda_r})\leq
\lct_x(X,\frb_1^{\lambda_1}\dotsm\frb_r^{\lambda_r}).$$
\item[ii)] If $\lambda_j\leq\mu_j$ for every $j$, then 
$$\mld_x(X,\fra_1^{\lambda_1}\dotsm\fra_r^{\lambda_r})\geq\mld_x(X,\fra_1^{\mu_1}\dotsm\fra_r^{\mu_r})
\quad\text{and}\quad \lct_x(X,\fra_1^{\lambda_1}\dotsm\fra_r^{\lambda_r})\geq \lct_x(X,\fra_1^{\mu_1}\dotsm\fra_r^{\mu_r}).$$
\item[iii)] For every $\delta>0$, we have
$$\lct_x(X,\fra_1^{\delta\lambda_1}\dotsm\fra_r^{\delta\lambda_r})=\delta^{-1}\cdot \lct_x(X,\fra_1^{\lambda_1}\dotsm\fra_r^{\lambda_r}).$$
\item[iv)] If $E$ is a divisor over $X$ with $c_X(E)=x$ and $E$ computes $\mld_x(X,\fra_1^{\lambda_1}\dotsm\fra_r^{\lambda_r})$
${\rm (}$resp., $\lct_x(X,\fra_1^{\lambda_1}\dotsm\fra_r^{\lambda_r})$${\rm )}$ and if $d$ is a positive integer such that
$d\cdot \ord_E(\frm_x)\geq\ord_E(\fra_j)$ for all $j$, then $E$  computes
$\mld_x\big(X,\prod_{j=1}^r(\fra_j+\frm_x^d)^{\lambda_j}\big)$ and this is equal to $\mld_x\big(X,\prod_{j=1}^r\fra_j^{\lambda_j}\big)$
${\rm (}$resp., $E$ computes $\lct_x\big(X,\prod_{j=1}^r(\fra_j+\frm_x^d)^{\lambda_j}\big)$ and this is equal to $\lct_x\big(X,\prod_{j=1}^r\fra_j^{\lambda_j}\big)$${\rm )}$.
\end{enumerate}
\end{proposition}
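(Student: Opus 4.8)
The plan is to reduce everything to two elementary properties of the valuation $\ord_E$ attached to a divisor $E$ over $X$, both immediate from the presentation $\fra\cdot\cO_{Y,E}=(t^{\ord_E(\fra)})$: it is inclusion-reversing (if $\fra\subseteq\frb$ then $\ord_E(\fra)\geq\ord_E(\frb)$) and it satisfies $\ord_E(\fra+\frb)=\min\{\ord_E(\fra),\ord_E(\frb)\}$. Together with the additivity and homogeneity of $\ord_E$ in the exponents of an $\RR$-ideal recalled in \S2, these give parts (i)--(iii) directly. For (i): if $\fra_j\subseteq\frb_j$ for all $j$, then for every $E$ one has $\ord_E(\fra_1^{\lambda_1}\dotsm\fra_r^{\lambda_r})=\sum_j\lambda_j\,\ord_E(\fra_j)\geq\sum_j\lambda_j\,\ord_E(\frb_j)=\ord_E(\frb_1^{\lambda_1}\dotsm\frb_r^{\lambda_r})$, hence $a_E(X,\fra_1^{\lambda_1}\dotsm\fra_r^{\lambda_r})\leq a_E(X,\frb_1^{\lambda_1}\dotsm\frb_r^{\lambda_r})$ and $(k_E+1)/\ord_E(\fra_1^{\lambda_1}\dotsm\fra_r^{\lambda_r})\leq(k_E+1)/\ord_E(\frb_1^{\lambda_1}\dotsm\frb_r^{\lambda_r})$ (here $k_E+1\geq0$ since $X$ is log canonical, and with the convention that division by $0$ yields $+\infty$); taking the infimum over the divisors $E$ with $c_X(E)=x$ (resp. with $x\in c_X(E)$) gives both inequalities. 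Part (ii) is the same computation using $\sum_j\lambda_j\,\ord_E(\fra_j)\leq\sum_j\mu_j\,\ord_E(\fra_j)$, valid because $\ord_E(\fra_j)\geq0$. Part (iii) follows from $\ord_E(\fra_1^{\delta\lambda_1}\dotsm\fra_r^{\delta\lambda_r})=\delta\cdot\ord_E(\fra_1^{\lambda_1}\dotsm\fra_r^{\lambda_r})$, which factors out of the defining infimum of $\lct_x$.

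The only part requiring an actual observation is (iv), and it is short. Since $c_X(E)=x$ we have $\ord_E(\frm_x)>0$, and the hypothesis $d\cdot\ord_E(\frm_x)\geq\ord_E(\fra_j)$ yields
$$\ord_E(\fra_j+\frm_x^d)=\min\{\ord_E(\fra_j),\,d\cdot\ord_E(\frm_x)\}=\ord_E(\fra_j)\quad\text{for every }j.$$
Consequently $a_E\big(X,\prod_{j}(\fra_j+\frm_x^d)^{\lambda_j}\big)=a_E\big(X,\prod_{j}\fra_j^{\lambda_j}\big)$ and, likewise, $(k_E+1)/\ord_E\big(\prod_{j}(\fra_j+\frm_x^d)^{\lambda_j}\big)=(k_E+1)/\ord_E\big(\prod_{j}\fra_j^{\lambda_j}\big)$: the single divisor $E$ takes the same value on the enlarged $\RR$-ideal as on the original. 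On the other hand $\fra_j\subseteq\fra_j+\frm_x^d$, so part (i) gives $\mld_x\big(X,\prod_{j}\fra_j^{\lambda_j}\big)\leq\mld_x\big(X,\prod_{j}(\fra_j+\frm_x^d)^{\lambda_j}\big)$ and $\lct_x\big(X,\prod_{j}\fra_j^{\lambda_j}\big)\leq\lct_x\big(X,\prod_{j}(\fra_j+\frm_x^d)^{\lambda_j}\big)$. Combining the two, the invariants coincide and $E$ computes the enlarged one. When $\mld_x\big(X,\prod_{j}\fra_j^{\lambda_j}\big)\geq0$ (resp. $\lct_x\geq$ the relevant value) this is exactly the stated equality; when $\mld_x\big(X,\prod_{j}\fra_j^{\lambda_j}\big)<0$ one reads off directly that $a_E\big(X,\prod_{j}(\fra_j+\frm_x^d)^{\lambda_j}\big)=a_E\big(X,\prod_{j}\fra_j^{\lambda_j}\big)<0$, so $E$ computes the enlarged mld in the sense of the convention of \S2.

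I do not expect a genuine obstacle here: the content is the single identity $\ord_E(\fra_j+\frm_x^d)=\ord_E(\fra_j)$ of (iv), and the rest is the monotonicity already established in (i). The only care needed is bookkeeping with the two conventions for the word ``computes''---the case $\mld_x<0$, and the possibility that $\ord_E(\cdot)=0$ in the $\lct_x$ quotients---together with the remark that $c_X(E)=x$ forces $\ord_E(\frm_x)>0$ so that the expressions in (iv) are meaningful. Spelling out the few lines above is all that is required, which is why the statement is presented without proof in the text.
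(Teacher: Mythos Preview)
Your argument is correct and is exactly the straightforward verification the paper has in mind; indeed the paper omits the proof entirely, stating only that it is straightforward. Your handling of part~(iv)---reducing to the identity $\ord_E(\fra_j+\frm_x^d)=\ord_E(\fra_j)$ and then invoking the monotonicity from part~(i)---is the natural and expected route.
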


In the next section we will need to work in a more general setting than the one described above, in which $X$ is allowed to be a normal, excellent,
$\QQ$-Gorenstein scheme of characteristic $0$ (that is, all the residue fields of $X$ have characteristic $0$). All the above definitions extend to this setting.
For details, in particular for the precise definitions of $K_X$ and $K_{Y/X}$ in this framework, we refer to \cite[Appendix A]{dFEM1}.

\section{Generic limits: bounding the order of the ideal of the point} \label{section:generic_limit}

Our goal in this section is to prove Theorem~\ref{thm_bound_ord_point}.
The proof uses generic limits of sequences of ideals. Such a construction based on nonstandard methods was given in \cite{dFM} and a different one, 
with the same properties but based on
sequences of generic points was later given in \cite{Kollar1}. In what follows we simply recall the basic properties of such a construction, following \cite{dFEM1}.

Let $X$ be a klt variety over $k$ and $x\in X$ a closed point. Given a
positive integer $r$ and $r$ 
 sequences of coherent sheaves of ideals $(\fra^{(i)}_j)_{i\geq 1}$ on $X$ for $1\leq j\leq r$,
we get an affine klt scheme $\widetilde{X}$, a closed point $\widetilde{x}\in\widetilde{X}$, and $r$ ideals $\widetilde{\fra}_1,\ldots,\widetilde{\fra}_r$ on $\widetilde{X}$
(note that it can happen for some $\widetilde{\fra}_j$ to be zero).
In \cite{dFEM1} one allows the variety $X$ to vary as well; since we assume that this is not the case, it is easy
to describe $\widetilde{X}$. If some affine neighborhood of $x$ in $X$ is defined  in some $\AAA_k^N$ by $h_1,\ldots,h_s$, then
$\widetilde{X}=\Spec (K\llbracket x_1,\ldots,x_N\rrbracket/(h_1,\ldots,h_s))$ for some algebraically closed field extension $K$ of $k$,
and $\widetilde{x}$ is the unique closed point of $\widetilde{X}$.
If for some $j$ we have $\fra^{(i)}_j=\frm_x$ for all $i\gg 0$, then $\widetilde{\fra}_j$ is the ideal $\frm_{\widetilde{x}}$ defining $\widetilde{x}$.
We collect in the next proposition some basic properties of this construction. 

\begin{proposition}\label{properties_generic_limit}
With the above notation, the following hold:
\begin{enumerate}
\item[i)] If $\widetilde{\fra}_j=0$, then for every $q$, there are infinitely many $i$ such that $\fra_j^{(i)}\subseteq\frm_x^q$.
\item[ii)] For every $d$, there is an infinite subset $\Lambda=\Lambda_d\subset\ZZ_{>0}$ such that for every $i\in \Lambda$ and for every $\lambda_1,\ldots,\lambda_r\in\RR_{\geq 0}$, we have
$$\lct_{\widetilde{x}}\big(\widetilde{X},\prod_{j=1}^r(\widetilde{\fra}_j+\frm_{\widetilde{x}}^d)^{\lambda_j}\big)=\lct_x\big(X,\prod_{j=1}^r(\fra_j^{(i)}+\frm_x^d)^{\lambda_j}\big).$$
\item[iii)] For every $\lambda_1,\ldots,\lambda_r\in\RR_{>0}$, if we consider the $\RR$-ideals  $\fra^{(i)}=\prod_{j=1}^r(\fra^{(i)}_j)^{\lambda_j}$ and $\widetilde{\fra}=\prod_{j=1}^r\widetilde{\fra}_j^{\lambda_j}$,
then $\lct_{\widetilde{x}}(\widetilde{X},\widetilde{\fra})$ is a limit point of the set $\{\lct_x(X,\fra^{(i)})\mid i\geq 1\}$ ${\rm (}$with the convention that if some $\widetilde{\fra}_j=0$, then $\lct_{\widetilde{x}}(\widetilde{X},\widetilde{\fra})=0$${\rm )}$.
\item[iv)] Suppose that $\widetilde{\fra}_j\neq 0$ for all $j$.
If $E$ is a divisor over $\widetilde{X}$ with $c_{\widetilde{X}}(E)=\widetilde{x}$ and such that $E$ computes $\lct_{\widetilde{x}}(\widetilde{X},\widetilde{\fra})$, then for every $d\gg 0$ there is an
infinite subset $\Lambda'=\Lambda'_d(E,\lambda_1,\ldots,\lambda_r)\subset\ZZ_{>0}$ with the following property:  for every $i\in \Lambda'$ there is a divisor $E_i$ over $X$ that computes
$\lct_x\big(X,\prod_{j=1}^r(\fra^{(i)}_j+\frm_x^d)^{\lambda_j}\big)$, which is equal to $\lct_{\widetilde{x}}\big(\widetilde{X},\prod_{j=1}^r(\widetilde{\fra}_j+\frm_{\widetilde{x}}^d)^{\lambda_j}\big)$
and we have $\ord_E(\frm_{\widetilde{x}})=\ord_{E_i}(\frm_x)$ ${\rm (}$in particular, we have $c_X(E_i)=x$${\rm )}$, $k_{E_i}=k_E$, and $\ord_E(\widetilde{\fra}_j+\frm_{\widetilde{x}}^d)=\ord_{E_i}(\fra^{(i)}_j+\frm_x^d)$ for $1\leq j\leq r$. 
\end{enumerate}
\end{proposition}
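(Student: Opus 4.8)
The plan is to read all four items off the theory of generic limits of sequences of ideals, developed in \cite{dFM} and \cite{Kollar1} and recorded in the form we use in \cite{dFEM1}; since in \cite{dFEM1} the ambient variety is allowed to vary while here it is fixed, the work is to quote \cite{dFEM1}, note the resulting simplifications, and carry out the (elementary) deduction of part~(iii). First I would recall the mechanics of the construction. Writing $R:=\widehat{\cO_{X,x}}=k\llbracket x_1,\dots,x_N\rrbracket/(h_1,\dots,h_s)$, for each $d$ the truncated ideal $\fra^{(i)}_j+\frm_x^d$ is recorded by a $k$-rational point of a projective $k$-scheme $\mathcal H_d$ parametrizing the ideals of the Artinian ring $R/\frm_x^d$ (a finite disjoint union of closed subschemes of Grassmannians, according to the Hilbert function), compatibly in $d$ through the truncation morphisms $\mathcal H_{d+1}\to\mathcal H_d$. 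A diagonal argument produces a nested chain of infinite subsets $\ZZ_{>0}=\Lambda_0\supseteq\Lambda_1\supseteq\cdots$ such that for every $d$ the points attached to $\{(\fra^{(i)}_1+\frm_x^d,\dots,\fra^{(i)}_r+\frm_x^d):i\in\Lambda_d\}$ are Zariski dense in an irreducible closed subvariety $Z_d\subseteq\mathcal H_d^{\,r}$, with the image of $Z_{d+1}$ dense in $Z_d$; passing to the generic points and base changing to a large algebraically closed field $K\supseteq k$ one obtains compatible ideals $(\widetilde\fra_j)_d\subseteq\widetilde R:=R\,\widehat\otimes_k K$ which patch together (in the sense of \cite{dFM}) to ideals $\widetilde\fra_j$ on $\widetilde X:=\Spec\widetilde R$ with $\widetilde\fra_j+\frm_{\widetilde x}^d=(\widetilde\fra_j)_d$. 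With a constant sequence of varieties this is exactly the construction of \cite{dFEM1}, and it produces $\widetilde X$ together with its closed point $\widetilde x$ as described before the statement.

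Granting this, parts~(i), (ii) and (iv) are special cases of results of \cite{dFEM1}, and I would only recall the underlying mechanisms. For~(i): the hypothesis $\widetilde\fra_j=0$ forces $(\widetilde\fra_j)_q=\frm_{\widetilde x}^q$ for every $q$, but ``the $j$-th truncation is the zero ideal of $R/\frm_x^q$'' is a closed condition on $\mathcal H_q^{\,r}$, so, holding at the generic point of $Z_q$, it holds on all of $Z_q$, in particular at the dense family of points indexed by $i\in\Lambda_q$; this gives $\fra^{(i)}_j+\frm_x^q=\frm_x^q$, i.e.\ $\fra^{(i)}_j\subseteq\frm_x^q$, for those (infinitely many) $i$. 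For~(ii): over a dense open subset of $Z_d$ one takes a relative log resolution of the universal truncated ideal together with the (constant) ideal of $\widetilde x$; along it the numbers $k_E$ and the orders of vanishing of the universal ideals are locally constant and involve only finitely many divisors $E$, so for every choice of exponents $\lambda_1,\dots,\lambda_r$ the log canonical threshold is computed by one and the same linear–fractional minimum at all $k$-points of that open set and at its generic point, whence it equals $\lct_{\widetilde x}\bigl(\widetilde X,\prod_j(\widetilde\fra_j+\frm_{\widetilde x}^d)^{\lambda_j}\bigr)$; one then replaces $\Lambda_d$ by the $i$ whose associated points lie in this open set. For~(iv): a log resolution of $\bigl(\widetilde X,\prod_j(\widetilde\fra_j+\frm_{\widetilde x}^d)\cdot\frm_{\widetilde x}\bigr)$ together with the divisor $E$ — which for $d\gg0$ also computes the truncated threshold, by Proposition~\ref{general_properties}(iv) — is defined over a finitely generated subextension of $K/k$ and hence spreads out to a family over a variety dominating an open subset of $Z_d$; after shrinking $\Lambda_d$ to an infinite $\Lambda'$ so that the corresponding $k$-points lie in this open set, the fibres of $E$ give divisors $E_i$ over $X$ with $k_{E_i}=k_E$, $\ord_{E_i}(\frm_x)=\ord_E(\frm_{\widetilde x})$ and $\ord_{E_i}(\fra^{(i)}_j+\frm_x^d)=\ord_E(\widetilde\fra_j+\frm_{\widetilde x}^d)$; since $\ord_{E_i}(\frm_x)=\ord_E(\frm_{\widetilde x})\ge1$ we get $c_X(E_i)=x$, and comparing these equalities with~(ii) shows that $E_i$ computes $\lct_x\bigl(X,\prod_j(\fra^{(i)}_j+\frm_x^d)^{\lambda_j}\bigr)=\lct_{\widetilde x}\bigl(\widetilde X,\prod_j(\widetilde\fra_j+\frm_{\widetilde x}^d)^{\lambda_j}\bigr)$. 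I expect this last step, the spreading-out of the distinguished divisor $E$ over a finitely generated field of definition with control of all the numerical invariants, to be the main obstacle; it is the technical core taken from \cite{dFEM1}, and a secondary point is to check that the constructions of \cite{dFEM1}, stated with a varying ambient variety, do specialize to the simplified form used here.

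Finally I would deduce~(iii) from~(i), (ii) and the elementary fact that, for nonzero ideals $\frb_1,\dots,\frb_r$ on a $\QQ$-Gorenstein klt germ $(V,y)$ and exponents $\lambda_j>0$, setting $t_0:=\lct_y\bigl(V,\prod_j\frb_j^{\lambda_j}\bigr)$,
$$\lim_{d\to\infty}\lct_y\Bigl(V,\prod_{j}(\frb_j+\frm_y^d)^{\lambda_j}\Bigr)=t_0 .$$
Here ``$\ge$'' is Proposition~\ref{general_properties}(i); for ``$\le$'', given $\varepsilon>0$ the pair $\bigl(V,\prod_j\frb_j^{(t_0+\varepsilon)\lambda_j}\bigr)$ is not log canonical at $y$, so (as recalled in \S2, because then $\mld_y=-\infty$ when $\dim V\ge2$, the one-dimensional case being immediate) there is a divisor $E$ with $c_V(E)=y$ and $a_E<0$; since then $\ord_E(\frm_y)\ge1$, for $d\ge\max_j\ord_E(\frb_j)$ one has $\ord_E(\frb_j+\frm_y^d)=\ord_E(\frb_j)$, so this same $E$ shows $\lct_y\bigl(V,\prod_j(\frb_j+\frm_y^d)^{\lambda_j}\bigr)<t_0+\varepsilon$. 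If some $\widetilde\fra_{j_0}=0$, then $\lct_{\widetilde x}(\widetilde X,\widetilde\fra)=0$ by convention, and~(i) together with Proposition~\ref{general_properties}(i),(iii) gives $\lct_x(X,\fra^{(i)})\le\lct_x(X,\frm_x)/(q\lambda_{j_0})$ for every $q$ and infinitely many $i$, so $0$ is a limit point. If instead all $\widetilde\fra_j\neq0$, write $f(d):=\lct_{\widetilde x}\bigl(\widetilde X,\prod_j(\widetilde\fra_j+\frm_{\widetilde x}^d)^{\lambda_j}\bigr)$, which by the displayed fact decreases to $\lct_{\widetilde x}(\widetilde X,\widetilde\fra)$; combining~(ii) with Proposition~\ref{general_properties}(i) gives $\lct_x(X,\fra^{(i)})\le f(d)$ for all $i\in\Lambda_d$, and, following the diagonal-and-subsequence argument of \cite{dFEM1} (extract $i_d\in\Lambda_d$, $i_d$ increasing, along which $\lct_x(X,\fra^{(i_d)})$ converges, and use that $f(d)=\lct_x\bigl(X,\prod_j(\fra^{(i_d)}_j+\frm_x^d)^{\lambda_j}\bigr)\ge\lct_x(X,\fra^{(i_d)})$), one identifies the limit with $\lct_{\widetilde x}(\widetilde X,\widetilde\fra)$, so that the latter is a limit point of $\{\lct_x(X,\fra^{(i)})\mid i\ge1\}$.
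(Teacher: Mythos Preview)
Your approach is essentially the paper's: both cite \cite{dFEM1} for parts (i)--(iv). The paper is much terser --- it simply points to \cite[Lemma~3.1, Proposition~3.3, Corollary~3.4]{dFEM1} --- and isolates exactly one assertion as not explicit in that reference: the equality $k_{E_i}=k_E$ in (iv). Rather than extracting this from the spreading-out of $E$ as you do (which requires going into the \emph{proof} in \cite{dFEM1}), the paper derives it formally from the other conclusions of (iv): for $d\ge\ord_E(\widetilde\fra_j)$ the divisor $E$ also computes the truncated threshold $\lct_{\widetilde x}(\widetilde X,\widetilde\frb)=\lct_{\widetilde x}(\widetilde X,\widetilde\fra)$, and since $E_i$ computes the equal truncated threshold on $X$ with $\ord_{E_i}(\frb^{(i)})=\ord_E(\widetilde\frb)$, the lct formula gives $k_E+1=\lct\cdot\ord_E(\widetilde\frb)=\lct\cdot\ord_{E_i}(\frb^{(i)})=k_{E_i}+1$. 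This has the advantage of using only the \emph{statements} in \cite{dFEM1}.

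One genuine gap in your sketch: your derivation of (iii) from (i) and (ii) only establishes the inequality $\limsup_d\lct_x(X,\fra^{(i_d)})\le\lct_{\widetilde x}(\widetilde X,\widetilde\fra)$. The sandwich you write down, $f(d)\ge\lct_x(X,\fra^{(i_d)})$, gives nothing for the reverse direction, and it is not clear how to bound $\lct_x(X,\fra^{(i_d)})$ from below using only (i) and (ii) --- knowing that the truncations at levels $e\le d$ all have lct equal to $f(e)$ says nothing about truncations at levels $e>d$, which is what controls the full lct. The paper does not attempt this reduction and simply cites \cite[Corollary~3.4]{dFEM1} for (iii).
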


\begin{proof}
For the assertion in i), see \cite[Lemma~3.1]{dFEM1}. The statements in ii), iii), and iv) follow from \cite[Proposition~3.3 and Corollary~3.4]{dFEM1}. The only assertion that is not explicitly
mentioned in \emph{loc. cit.} is the one in iv) saying that $k_E=k_{E_i}$. However, 
by taking $d$ such that $d\geq \ord_E(\widetilde{\fra}_j)$ for every $j$, we may assume that with $\widetilde{\frb}=\prod_{j=1}^r(\widetilde{\fra}_j+\frm_{\widetilde{x}}^d)^{\lambda_j}$, we have $\ord_E(\widetilde{\fra})=\ord_E(\widetilde{\frb})$ and 
 $\lct_{\widetilde{x}}(\widetilde{X},\widetilde{\frb})=\lct_{\widetilde{x}}(\widetilde{X},\widetilde{\fra})$ (see Proposition~\ref{general_properties}). 
 We now conclude that $k_E=k_{E_i}$ from the other assertions.
\end{proof}

\begin{remark}\label{remark_properties_generic_limit}
With the notation in the above proposition, we also have the following variant of the assertion in Proposition~\ref{properties_generic_limit}: 
for every $d$, there is an infinite subset $\Lambda=\Lambda_d\subset\ZZ_{>0}$ such that for every $i\in \Lambda$ and for every $\lambda_1,\ldots,\lambda_r\in\RR_{\geq 0}$, we have
$$\mld_{\widetilde{x}}\big(\widetilde{X},\prod_{j=1}^r(\widetilde{\fra}_j+\frm_{\widetilde{x}}^d)^{\lambda_j}\big)=\mld_x\big(X,\prod_{j=1}^r(\fra_j^{(i)}+\frm_x^d)^{\lambda_j}\big).$$
The proof is the same as in the case of log canonical thresholds (see \cite[Proposition~3.3]{dFEM1}), the key point being that minimal log discrepancies are constant generically in a family.
More precisely, suppose that $x\in X$ is fixed, $T$ is an arbitrary variety, and $\frb_1,\ldots,\frb_r$ are ideals on $X\times T$ such that each $\frb_{j,t}=\frb_j\cdot\cO_{X\times\{t\}}$, with $1\leq j\leq r$ and
$t\in T$, is nonzero. In this case, there is an open subset $U$ of $T$ such that for each $\lambda_1,\ldots,\lambda_r\in\RR_{\geq 0}$, the minimal log discrepancy
$$\mld_x\big(X,\prod_{j=1}^r\frb_{j,t}^{\lambda_j}\big)$$
is constant for $t\in U$.
Moreover, the set $\Lambda$ can be chosen such that the ideals 
$\widetilde{\fra}_1,\ldots,\widetilde{\fra}_r$ are again generic limits of the sequences
$({\fra}^{(i)}_1)_{i\in\Lambda},\ldots, ({\fra}^{(i)}_r)_{i\in\Lambda}$.
\end{remark}

We can now prove the main result of this section.

\begin{proof}[Proof of Theorem~\ref{thm_bound_ord_point}]
We argue by contradiction. If the conclusion of the theorem fails, then we can find a sequence of $\RR$-ideals $(\fra^{(i)})_{i\geq 1}$ with
exponents in $I$ such that one of the following things happens:

\noindent {\bf Case 1}. We have $\mld_x(X,\fra^{(i)})>0$ for all $i$ and for every $i$ there is a divisor $E_i$ over $X$ that
computes $\mld_x(X,\fra^{(i)})$ and such that $\lim_{i\to\infty}\ord_{E_i}(\frm_x)=\infty$.

\noindent {\bf Case 2}. We have $\mld_x(X,\fra^{(i)})=0$ for all $i$ and for every choice of divisors $E_i$ over $X$ such that 
$E_i$ computes $\mld_x(X,\fra^{(i)})$, we have $\lim_{i\to\infty}\ord_{E_i}(\frm_x)=\infty$.

\noindent {\bf Case 3}. We have $\mld_x(X,\fra^{(i)})<0$ for all $i$ and for every choice of divisors $E_i$ over $X$ such that
$E_i$ computes $\mld_x(X,\fra^{(i)})$, we have $\lim_{i\to\infty}\ord_{E_i}(\frm_x)=\infty$.

Suppose that $\lambda_1,\ldots,\lambda_r$ are the nonzero elements of $I$. We may assume that for every $i$ we can write
$\fra^{(i)}=\prod_{j=1}^r(\fra^{(i)}_j)^{\lambda_j}$. 
We use the generic limit construction to construct $\widetilde{x}\in \widetilde{X}$ and an ideal $\widetilde{\fra}_j$ on $\widetilde{X}$ corresponding to the
sequence $(\fra^{(i)}_j)_{i\geq 1}$ for $1\leq j\leq r$. Let $\widetilde{\fra}$ be the $\RR$-ideal on $\widetilde{X}$ given by
$\widetilde{\fra}=\prod_{j=1}^r\widetilde{\fra}_j^{\lambda_j}$. When some $\widetilde{\fra}_j$ is zero, we make the convention that $\widetilde{\fra}=0$ and
$\lct_{\widetilde{x}}(\widetilde{X}, \widetilde{\fra})=0$.

Suppose first that we are either in Case 1 or in Case 2. Note that since $\lct_x(X,\fra^{(i)})\geq 1$ for every $i$, it 
follows from Proposition~\ref{properties_generic_limit} that $\lct_{\widetilde{x}}(\widetilde{X},\widetilde{\fra})\geq 1$. In particular, each $\widetilde{\fra}_j$ is nonzero and we have $\mld_{\widetilde{x}}(\widetilde{X},\widetilde{\fra})\geq 0$.
Let us consider first the case when $\mld_{\widetilde{x}}(\widetilde{X},\widetilde{\fra})>0$. It follows from Proposition~\ref{prop1} that there is $\delta>0$ such that $\lct_{\widetilde{x}}(\widetilde{X},\widetilde{\fra}\cdot\frm_{\widetilde{x}}^{\delta})=1$.
In this case there are infinitely many $i$ such that
$\lct_x(X,\fra^{(i)}\cdot\frm_x^{\delta})\geq 1$. Indeed, if this is not the case, then $\lct_x(X,\fra^{(i)}\cdot\frm_x^{\delta})<1$ for all $i\gg 0$. On the other hand, it follows from Proposition~\ref{properties_generic_limit}
that $\lct_{\widetilde{x}}(\widetilde{X},\widetilde{\fra}\cdot\frm_{\widetilde{x}}^{\delta})=1$ is a limit point of the set $\{\lct_x(X,\fra^{(i)}\cdot\frm_x^{\delta})\mid i\geq 1\}$. This contradicts the fact 
that the set $\{\lct_x(X,\fra^{(i)}\cdot\frm_x^{\delta})\mid i\geq 1\}$ satisfies ACC (see \cite[Theorem~4.2]{dFEM1}).

For every $i$ such that $\lct_x(X,\fra^{(i)}\cdot\frm_x^{\delta})\geq 1$ and for every divisor $E_i$ that computes $\mld_x(X,\fra^{(i)})$, we obtain
$$\mld_x(X,\fra^{(i)})=k_{E_i}+1- \ord_{E_i}(\fra^{(i)})\geq \delta\cdot\ord_{E_i}(\frm_x).$$ 
Therefore
$$\ord_{E_i}(\frm_x)\leq \frac{\mld_x(X,\fra^{(i)})}{\delta}\leq \frac{\mld_x(X)}{\delta}$$
for infinitely many $i$, contradicting the fact that, by assumption, we can choose such divisors $E_i$ with $\lim_{i\to\infty}\ord_{E_i}(\frm_x)=\infty$.

We now consider the case when $\mld_{\widetilde{x}}(\widetilde{X},\widetilde{\fra})=0$ (still assuming that we are either in Case 1 or in Case 2). If 
 $F$ is a divisor over $\widetilde{X}$ that computes $\mld_{\widetilde{x}}(\widetilde{X},\widetilde{\fra})$, then it follows from
Proposition~\ref{properties_generic_limit} that for $d\gg 0$, we can find an infinite subset
$\Gamma'=\Gamma'_d(F,\lambda_1,\ldots,\lambda_r)\subset\ZZ_{>0}$ such that the following holds. For every
$i\in \Gamma'$ we have a divisor
$F_i$ over $X$ with $k_F=k_{F_i}$, $\ord_F(\frm_{\widetilde{x}})=\ord_{F_i}(\frm_x)$ (in particular,
$c_X(F_i)=x$), and such that if we put $\frb^{(i)}=\prod_{j=1}^r(\fra^{(i)}_j+\frm_x^d)^{\lambda_j}$ and $\widetilde{\frb}=\prod_{j=1}^r(\widetilde{\fra}_j+\frm_{\widetilde{x}}^d)^{\lambda_j}$, then
 $\ord_F(\widetilde{\frb})=\ord_{F_i}(\frb^{(i)})$.
By taking $d\geq \ord_F(\widetilde{\fra}_j)$ for every $j$, we may assume that $\ord_F(\widetilde{\fra})=\ord_F(\widetilde{\frb})$.
We conclude that
 $$0=a_F(\widetilde{X},\widetilde{\fra})=k_F+1-\ord_F(\widetilde{\fra})=k_{F_i}+1-\ord_{F_i}(\frb^{(i)})=a_{F_i}(X,\frb^{(i)})\geq a_{F_i}(X,\fra^{(i)})\geq 0$$
 for every $i\in\Gamma'$.
 In Case 1, this already gives a contradiction, since the last inequality is strict. If we are in Case 2, we conclude that the divisor $F_i$ computes $\mld_x(X,\fra^{(i)})$.
 By assumption, we must have $\ord_{F_i}(\frm_x)\to\infty$, contradicting the fact that $\ord_{F_i}(\frm_x)$ is constant for $i\in \Gamma'$.
 
 Finally, suppose that we are in Case 3.  Let us assume first that every $\widetilde{\fra}_j$ is nonzero. Since $\lct_x(X,\fra^{(i)})<1$ for every $i$ and the set 
 $\{\lct_x(X,\fra^{(i)})\mid i\geq 1\}$ has
 $\lct_{\widetilde{x}}(\widetilde{X},\widetilde{\fra})$ as a limit point  by Proposition~\ref{properties_generic_limit}, it follows that
 $\lct_{\widetilde{x}}(\widetilde{X},\widetilde{\fra})<1$ (recall that the set
 $\{\lct_x(X,\fra^{(i)})\mid i\geq 1\}$ satisfies ACC by  \cite[Theorem~4.2]{dFEM1}). Therefore $\mld_{\widetilde{x}}(\widetilde{X},\widetilde{\fra})<0$ and consider a divisor $G$
 over $\widetilde{X}$, with $c_{\widetilde{X}}(G)=\widetilde{x}$ and with $a_G(\widetilde{X},\widetilde{\fra})<0$. We now argue as above: we can find an infinite set $\Gamma''\subset\ZZ_{>0}$
 such that the following holds. For every $i\in \Gamma''$ we have a divisor $G_i$ over $X$ with $k_{G}=k_{G_i}$, $\ord_G(\frm_{\widetilde{x}})=\ord_{G_i}(\frm_x)$ (in particular,
$c_X(G_i)=x$), and such that 
 $\ord_G(\widetilde{\frb})=\ord_{G_i}(\frb^{(i)})$, where $\frb$ and $\frb^{(i)}$ are defined as above. Furthermore, we may assume 
that $\ord_G(\widetilde{\fra})=\ord_G(\widetilde{\frb})$ and we conclude that
 $$0>a_G(\widetilde{X},\widetilde{\fra})=k_G+1-\ord_G(\widetilde{\fra})=k_{G_i}+1-\ord_{G_i}(\frb^{(i)})=a_{G_i}(X,\frb^{(i)})\geq a_{G_i}(X,\fra^{(i)})$$
 for every $i\in \Gamma''$. Since $\ord_{G_i}(\frm_x)$ is constant for all $i\in \Gamma''$, this gives a contradiction. 
 
 Let us consider now the case when some $\widetilde{\fra}_j$ is zero. Let $T$ be a fixed divisor over $X$ with $c_X(T)=x$ and let $q$ be a positive integer
 with $q>\frac{k_T+1}{\lambda_j\cdot\ord_T(\frm_x)}$. Since $\widetilde{\fra}_j$ is zero, it follows from Proposition~\ref{properties_generic_limit} that
there are infinitely many $i$ with $\fra_j^{(i)}\subseteq\frm_x^q$. In this case we have
$$a_T(X,\fra^{(i)})\leq a_T(X,\frm_x^{\lambda_j q})=k_T+1-\lambda_jq\cdot\ord_T(\frm_x)<0.$$
Therefore $T$ computes $\mld_x(X,\fra^{(i)})$ for infinitely many $i$, a contradiction. This completes the proof of the theorem.
\end{proof}

While by using generic limits we cannot get a proof for the full statement in Conjecture~\ref{conj_main}, we also obtain the
following related statement.

\begin{proposition}\label{prop_LC}
Let $X$ be a klt variety and $x\in X$ a closed point. If $I\subset\RR_{\geq 0}$ is a finite set, then there is a positive integer $\ell$ such that
for every $\RR$-ideal with exponents in $I$, if $a_E(X,\fra)\geq 0$ for all divisors $E$ over $X$ with $c_X(E)=x$ and $k_E\leq\ell$, then
$(X,\fra)$ is log canonical at $x$.
\end{proposition}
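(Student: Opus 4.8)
The plan is to argue by contradiction, exactly in the style of the proof of Theorem~\ref{thm_bound_ord_point}, using generic limits. Suppose the statement fails. Then there is a sequence of $\RR$-ideals $(\fra^{(i)})_{i\geq 1}$ with exponents in $I$, say $\fra^{(i)}=\prod_{j=1}^r(\fra^{(i)}_j)^{\lambda_j}$ with $\lambda_1,\dots,\lambda_r$ the nonzero elements of $I$, such that $(X,\fra^{(i)})$ is \emph{not} log canonical at $x$, yet every divisor $E$ over $X$ with $c_X(E)=x$ witnessing this (i.e.\ with $a_E(X,\fra^{(i)})<0$) has $k_E>i$. In particular, for each $i$ and each divisor $E$ over $X$ with $c_X(E)=x$ and $k_E\leq i$, we have $a_E(X,\fra^{(i)})\geq 0$, while $\mld_x(X,\fra^{(i)})<0$ (hence $=-\infty$ if $\dim X\geq 2$; the case $\dim X=1$ is trivial since then $X$ is smooth and the blow-up of $x$ already computes everything, so we may assume $\dim X\geq 2$). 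Apply the generic limit construction to the sequences $(\fra^{(i)}_j)_{i\geq 1}$ to obtain $\widetilde{x}\in\widetilde{X}$ and ideals $\widetilde{\fra}_j$ on $\widetilde{X}$, and set $\widetilde{\fra}=\prod_{j=1}^r\widetilde{\fra}_j^{\lambda_j}$ (with the usual convention that $\widetilde{\fra}=0$ and $\lct_{\widetilde{x}}(\widetilde{X},\widetilde{\fra})=0$ if some $\widetilde{\fra}_j=0$).

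The first step is to show $\lct_{\widetilde{x}}(\widetilde{X},\widetilde{\fra})<1$, equivalently $\mld_{\widetilde{x}}(\widetilde{X},\widetilde{\fra})<0$. If some $\widetilde{\fra}_j=0$ this is immediate from the convention. Otherwise, since $\lct_x(X,\fra^{(i)})<1$ for all $i$ and, by Proposition~\ref{properties_generic_limit}(iii), $\lct_{\widetilde{x}}(\widetilde{X},\widetilde{\fra})$ is a limit point of $\{\lct_x(X,\fra^{(i)})\mid i\geq 1\}$, while this set satisfies ACC by \cite[Theorem~4.2]{dFEM1}, we get $\lct_{\widetilde{x}}(\widetilde{X},\widetilde{\fra})<1$. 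Now pick a divisor $G$ over $\widetilde{X}$ with $c_{\widetilde{X}}(G)=\widetilde{x}$ and $a_G(\widetilde{X},\widetilde{\fra})<0$. (In the case where some $\widetilde{\fra}_j=0$, instead argue directly as at the end of the proof of Theorem~\ref{thm_bound_ord_point}: choose a fixed divisor $T$ over $X$ with $c_X(T)=x$, note that $\fra_j^{(i)}\subseteq\frm_x^q$ for infinitely many $i$ with $q\gg 0$, so $a_T(X,\fra^{(i)})<0$; but $k_T$ is a fixed constant, so for $i>k_T$ this contradicts our assumption. So henceforth assume all $\widetilde{\fra}_j\neq 0$ and that we have $G$ as above.)

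The second step transports $G$ back to $X$. By Proposition~\ref{properties_generic_limit}(iv), for $d\gg 0$ there is an infinite set $\Gamma\subset\ZZ_{>0}$ so that for each $i\in\Gamma$ there is a divisor $G_i$ over $X$ with $c_X(G_i)=x$, $k_{G_i}=k_G$, $\ord_G(\frm_{\widetilde{x}})=\ord_{G_i}(\frm_x)$, and $\ord_G(\widetilde{\frb})=\ord_{G_i}(\frb^{(i)})$ where $\frb^{(i)}=\prod_{j=1}^r(\fra^{(i)}_j+\frm_x^d)^{\lambda_j}$ and $\widetilde{\frb}=\prod_{j=1}^r(\widetilde{\fra}_j+\frm_{\widetilde{x}}^d)^{\lambda_j}$; by enlarging $d$ so that $d\geq\ord_G(\widetilde{\fra}_j)$ for all $j$, we may assume $\ord_G(\widetilde{\fra})=\ord_G(\widetilde{\frb})$. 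Then
$$0>a_G(\widetilde{X},\widetilde{\fra})=k_G+1-\ord_G(\widetilde{\fra})=k_{G_i}+1-\ord_{G_i}(\frb^{(i)})=a_{G_i}(X,\frb^{(i)})\geq a_{G_i}(X,\fra^{(i)}).$$
So $G_i$ is a divisor over $X$ with $c_X(G_i)=x$, $a_{G_i}(X,\fra^{(i)})<0$, and $k_{G_i}=k_G$ a \emph{fixed} constant independent of $i$. Taking any $i\in\Gamma$ with $i>k_G$ contradicts the standing assumption that every such divisor has $k_{G_i}>i$. This completes the contradiction and hence the proof.

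I expect the only real subtlety to be the bookkeeping in the case where some $\widetilde{\fra}_j=0$, which is handled exactly as in Theorem~\ref{thm_bound_ord_point} (it is actually the easy case, since then one fixed divisor $T$ detects non-log-canonicity for infinitely many $i$). The substantive input — the ACC for log canonical thresholds of generic limits, and the matching of invariants $k_E$, $\ord_E(\frm_x)$, $\ord_E$ of the ideals between $G$ and $G_i$ — is already packaged in \cite[Theorem~4.2]{dFEM1} and Proposition~\ref{properties_generic_limit}, so the argument is genuinely a variant of the one just given.
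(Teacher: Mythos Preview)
Your proposal is correct and follows essentially the same argument as the paper's own proof: contradiction via generic limits, ACC for log canonical thresholds to force $\lct_{\widetilde{x}}(\widetilde{X},\widetilde{\fra})<1$, then Proposition~\ref{properties_generic_limit}(iv) to transport a divisor $G$ with $a_G<0$ back to divisors $G_i$ over $X$ with fixed $k_{G_i}=k_G$, contradicting the assumption for $i>k_G$. You are in fact slightly more careful than the paper in treating separately the case where some $\widetilde{\fra}_j=0$ (handled exactly as in Case~3 of Theorem~\ref{thm_bound_ord_point}); the paper's proof of Proposition~\ref{prop_LC} passes over this case silently.
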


\begin{proof}
Suppose that the conclusion of the proposition fails. In this case we can find a sequence of $\RR$-ideals $\fra^{(i)}$ on $X$, with exponents in $I$,
such that each $(X,\fra^{(i)})$ is not log canonical at $x$, but 
$a_E(X,\fra^{(i)})\geq 0$ for all divisors $E$ over $X$ with $c_X(E)=x$ and $k_E\leq i$. 
Let $\lambda_1,\ldots,\lambda_r$ be the nonzero elements in $I$ and let us write
$$\fra^{(i)}=\prod_{j=1}^r(\fra^{(i)}_j)^{\lambda_j}.$$
We use the generic limit construction to produce $\widetilde{x}\in \widetilde{X}$ and ideals $\widetilde{\fra}_j$ on $\widetilde{X}$ corresponding to the
sequences $(\fra^{(i)}_j)_{i\geq 1}$ for $1\leq j\leq r$. Let $\widetilde{\fra}$ be the $\RR$-ideal on $\widetilde{X}$ given by
$$\widetilde{\fra}=\prod_{j=1}^r\widetilde{\fra}_j^{\lambda_j}.$$ 
When some $\widetilde{\fra}_j$ is zero, we make the convention that $\widetilde{\fra}=0$. 

Our assumption implies $\lct_x(X,\fra^{(i)})<1$ for every $i$. Recall that $\lct_{\widetilde{x}}(\widetilde{X},\widetilde{\fra})$ is a limit point of the sequence
$\big(\lct_x(X,\fra^{(i)})\big)_{i\geq 1}$ by Proposition~\ref{properties_generic_limit} iii). On the other hand, this sequence contains no strictly increasing subsequences by
\cite[Theorem~4.2]{dFEM1}. Therefore $\lct_{\widetilde{x}}(\widetilde{X},\widetilde{\fra})<1$ and the pair $(\widetilde{X},\widetilde{\fra})$ is not log canonical at $\widetilde{x}$. Let $E$ be a divisor over $\widetilde{X}$ with
center $\widetilde{x}$ and such that $a_E(\widetilde{X},\widetilde{\fra})<0$. If $d\in\ZZ_{>0}$ is large enough, but fixed, then we clearly have
$$a_E\big(\widetilde{X}, \prod_j(\widetilde{\fra}_j+\frm_{\widetilde{x}}^{d})^{\lambda_j}\big)=a_E(\widetilde{X},\widetilde{\fra})<0.$$
On the other hand, it follows from Proposition~\ref{properties_generic_limit} iv) that there are infinitely many $i$ for which we can find divisors $E_i$ over $X$ with center $x$,
such that $k_{E_i}=k_E$ and
$$a_{E_i}\big(X,\prod_j(\fra^{(i)}_j+\frm_x^{d})^{\lambda_j}\big)=a_{E}\big(\widetilde{X}, \prod_j(\widetilde{\fra}_j+\frm_{\widetilde{x}}^{d})^{\lambda_j}\big)<0.$$
Since 
$$a_{E_i}\big(X,\prod_j(\fra^{(i)}_j)^{\lambda_j}\big)\leq a_{E_i}\big(X,\prod_j(\fra^{(i)}_j+\frm_x^d)^{\lambda_j}\big)<0$$
and $k_{E_i}=k_E$ for infinitely many $i$, we contradict our assumption. This completes the proof of the proposition.
\end{proof}

The assertion in Proposition~\ref{prop_LC} has an interesting consequence in connection with the description of log canonical pairs
in terms of jet schemes, when the ambient variety is smooth. This will not play any role in the following sections, so the reader not interested in jet schemes
could skip this part.

Recall that if $X$ is a smooth variety, $Y$ is a closed subscheme of $X$
defined by the nonzero ideal $\fra$, and $q\in\RR_{\geq 0}$, then the pair $(X,\fra^q)$ is log canonical if and only if
$$\dim(Y_m)\leq (m+1)(\dim(X)-q)\quad\text{for all}\quad m\geq 0,$$
where $Y_m$ is the $m^{\rm th}$ jet scheme of $Y$ (see \cite[Corollary~3.2]{ELM}).
For the definition and basic properties of jet schemes and contact loci, we refer to \cite{ELM}. 
Proposition~\ref{prop_LC} implies that if the dimension of $X$ and $q\in\RR_{\geq 0}$ are fixed, then
it is enough to check the dimensions of only a prescribed number of jet schemes.

\begin{proposition}\label{consequence_jet_schemes}
Given $n\geq 1$ and $q\in\RR_{\geq 0}$, there is a positive integer $N$ that satisfies the following property.
For every smooth $n$-dimensional variety $X$ and for every closed subscheme $Y$ of $X$ defined by 
a nonzero ideal $\fra$, the pair $(X,\fra^q)$ is log canonical if and only if 
$$\dim(Y_m)\leq (m+1)(n-q)\quad\text{for all}\quad m\leq N.$$
\end{proposition}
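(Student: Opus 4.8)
The plan is to deduce the statement from Proposition~\ref{prop_LC} by passing through the arc-space description of log canonical thresholds. One direction is immediate: if $(X,\fra^q)$ is log canonical, then $\dim(Y_m)\le(m+1)(n-q)$ for every $m\ge 0$ by \cite[Corollary~3.2]{ELM}, hence in particular for $m\le N$, whatever $N$ is. So the real task is to produce $N=N(n,q)$ with the property that whenever $(X,\fra^q)$ is \emph{not} log canonical, one of the finitely many inequalities $\dim(Y_m)\le(m+1)(n-q)$ with $m\le N$ already fails. We may assume $q>0$, since for $q=0$ both conditions in the equivalence hold for every $Y$ and then $N=0$ works.

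First I would apply Proposition~\ref{prop_LC} with $I=\{q\}$, but in the version in which $X$ is allowed to range over all smooth $n$-dimensional varieties at once. This is legitimate because the proof via generic limits goes through verbatim when the ambient variety is allowed to vary as in \cite{dFEM1} (the generic limit of a sequence of smooth $n$-dimensional germs is $\Spec K\llbracket x_1,\dots,x_n\rrbracket$), so the integer $\ell$ it produces depends only on $n$ and $q$; I then put $N:=\rdwn{(\ell+1)/q}$. Now assume $(X,\fra^q)$ is not log canonical. Then it fails to be log canonical at some closed point $x\in X$, and since $X$ is smooth of dimension $n$, Proposition~\ref{prop_LC} yields a divisor $E$ over $X$ with $c_X(E)=x$, $k_E\le\ell$, and $a_E(X,\fra^q)<0$, i.e.\ $q\cdot\ord_E(\fra)>k_E+1$. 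Set $e:=\ord_E(\fra)\in\ZZ_{>0}$ and let $p$ be the least positive integer with $p>(k_E+1)/q$; then $p\le e$ (because $e$ is an integer exceeding $(k_E+1)/q$), and $p-1\le(\ell+1)/q$, so $m:=p-1\le N$.

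Finally I would translate the existence of $E$ into a statement about $Y_{p-1}$ using \cite{ELM}. For $X$ smooth, the maximal divisorial set associated with $\ord_E$ is a cylinder in the arc space $J_\infty(X)$ of codimension $k_E+1$, whose generic arc has contact order $e$ with $\fra$; as $p\le e$, this cylinder is contained in $\mathrm{Cont}^{\ge p}(\fra)$, which in turn is the preimage of $Y_{p-1}$ under the truncation map $J_\infty(X)\to X_{p-1}$ onto the $(p-1)$-st jet scheme of $X$ (of dimension $pn$). Since codimension is preserved by this truncation for $X$ smooth, we obtain $pn-\dim(Y_{p-1})\le k_E+1$, and because $k_E+1<pq$ by the choice of $p$,
$$\dim(Y_{p-1})\ \ge\ pn-(k_E+1)\ >\ pn-pq\ =\ \bigl((p-1)+1\bigr)(n-q),$$
so the inequality fails for $m=p-1\le N$. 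This gives the proposition with $N=\rdwn{(\ell+1)/q}$. The only genuinely delicate step is the uniformity of $\ell$ in the previous paragraph: Proposition~\ref{prop_LC} is stated for a fixed germ, so one has to rerun its generic-limit proof with the ambient smooth $n$-fold varying in order to get $\ell$ depending on $n$ (and $q$) alone. Everything after that is a routine passage through the arc-space formalism, the required facts about maximal divisorial sets and contact loci (codimension $k_E+1$, invariance of codimension under truncation maps) being standard for smooth ambient varieties and contained in \cite{ELM}.
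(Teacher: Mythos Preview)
Your argument is correct and follows the same overall strategy as the paper: invoke Proposition~\ref{prop_LC} to produce a divisor $E$ with bounded $k_E$ witnessing non-log-canonicity, then translate this via the contact-locus formalism of \cite{ELM} into a violation of the jet-scheme inequality for some $m\le N=\lfloor(\ell+1)/q\rfloor$. The computation with $p$, $e$, and the codimension of the maximal divisorial set matches the paper's exactly (your $p-1$ is the paper's $m=\lfloor(k_E+1)/q\rfloor$).

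The one genuine difference is in how uniformity over all smooth $n$-folds is obtained. You rerun the generic-limit proof of Proposition~\ref{prop_LC} with the ambient germ $(X,x)$ varying, appealing to the full framework of \cite{dFEM1} (where the ambient variety is allowed to vary in a bounded family) and to the fact that the generic limit of smooth $n$-dimensional germs is $\Spec K\llbracket x_1,\dots,x_n\rrbracket$. The paper instead applies Proposition~\ref{prop_LC} only to the single fixed germ $(\AAA^n,0)$, and then reduces an arbitrary smooth $X$ to this case by an \'etale map $g\colon X\to\AAA^n$: one truncates $\fra$ by powers of $\frm_x$, pushes the resulting zero-dimensional scheme to $\AAA^n$, observes that its jet schemes embed in $Y_m$, applies the $\AAA^n$ case, and finally passes to the limit $\lct_x(X,\fra)=\lim_d\lct_x(X,\fra+\frm_x^d)$. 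Your route is shorter and avoids this reduction, at the cost of invoking the more general version of the generic-limit machinery; the paper's route stays within what has been explicitly proved but requires the additional \'etale-plus-limit argument.
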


\begin{proof}
The case $q=0$ is trivial (the pair is always log canonical in this case, hence any $N$ will work), hence we assume from now on $q>0$.
We first consider the case when $X=\AAA^n$ and choose $\ell$ given by Proposition~\ref{prop_LC},
such that for every nonzero ideal $\fra$ in $\AAA^n$, if $a_E(\AAA^n,\fra^q)\geq 0$ for all divisors $E$ over $\AAA^n$ with center at the origin and $k_E\leq\ell$, then
$(\AAA^n,\fra^q)$ is log canonical at $0$. Let $N=\lfloor \frac{\ell+1}{q}\rfloor$, where $\lfloor u\rfloor$ denotes the largest integer $\leq u$. 
We show that if $\fra$ is a nonzero ideal defining the subscheme $Y$ of $\AAA^n$ such that
$\dim(Y_m)\leq (m+1)(n-q)$ for all $m\leq N$, then $(\AAA^n,\fra^q)$ is log canonical at $0$. 

Indeed, if $(\AAA^n,\fra)$ is not log canonical at $0$, then it follows by assumption that
there is a divisor $E$ over $\AAA^n$ with center $0$ such that $k_E\leq\ell$ and
$k_E+1<q\cdot \alpha_E$, where $\alpha_E=\ord_E(\fra)$. Since $\alpha_E$ is an integer, it follows that 
$\alpha_E\geq m+1$, where $m=\lfloor \frac{k_E+1}{q}\rfloor$.
Let $f\colon Y\to \AAA^n$ be a log resolution of $(\AAA^n,\fra)$ such that
$E$ appears as a divisor on $Y$. It follows from \cite[Theorem~2.1]{ELM} that if $C=\overline{f_{\infty}({\rm Cont}^{\geq 1}(E))}$,
then 
$$C\subseteq {\rm Cont}^{\geq \alpha_E}(\fra)\subseteq {\rm Cont}^{\geq (m+1)}(\fra)\quad\text{and}\quad {\rm codim}(C)=k_E+1.$$
We thus conclude that
$$\dim(Y_m)=(m+1)n-{\rm codim}({\rm Cont}^{\geq (m+1)}(\fra))\geq (m+1)n-{\rm codim}(C)>(m+1)(n-q).$$
Since $m\leq N$, this proves our assertion.

Suppose now that $X$ is an arbitrary smooth $n$-dimensional variety and $\fra$ is a nonzero ideal, defining the closed subscheme $Y$
of $X$, such that 
$$\dim(Y_m)\leq (m+1)(n-q)\quad\text{for all}\quad m\leq N.$$
We show that for every $x\in X$, the pair $(X,\fra^q)$ is log canonical at $x$.
Since $X$ is smooth, after possibly replacing $X$ by an open neighborhood of $X$, we may assume that
we have an \'{e}tale morphism $g\colon X\to\AAA^n$, with $g(x)=0$. Let $\frm_x$ denote the ideal defining $x$
and for every $d\geq 1$, let $\fra_d=\fra+\frm_x^d$, defining the subscheme $V(\fra_d)$ of $X$. For every such $d$, there is an ideal $\frb_d$ on $\AAA^n$ defining
a subscheme $V(\frb_d)$ supported at $0$ and such that $\frb_d\cdot\cO_X=\fra_d$. 
Note that for every $d$ and $m$, we have 
$$V(\frb_d)_m\simeq V(\fra_d)_m\hookrightarrow Y_m,$$
hence by assumption 
$$\dim\big(V(\frb_d)_m\big)\leq (m+1)(n-q)\quad\text{for all}\quad m\leq N.$$
As we have seen, this implies that 
$(\AAA^n,\frb_d^q)$ is log canonical.
Since $g$ is \'{e}tale, we have $\lct_x(X,\fra_d)=\lct_0(\AAA^n,\frb_d)\geq q$ for every $d$, while

$$\lct_x(X,\fra)=\lim_{d\to\infty}\lct_x(X,\fra_d)$$
(see, for example, \cite[Proposition~2.15]{dFEM1}).
We conclude that $\lct_x(X,\fra)\geq q$, that is, the pair $(X,\fra^q)$ is log canonical at $x$.
This completes the proof of the proposition.
\end{proof}

\section{A proof of the conjecture in dimension 2}

We begin with the following convexity property of log discrepancies from \cite[Proposition 2.37]{Kollar3}. 

\begin{proposition}\label{proposition:conv}
Let $X$ be a surface and $\fra$ an $\RR$-ideal on $X$ such that $(X,\fra)$ is log canonical, and $f \colon Y \to X$ a birational morphism 
from a smooth surface $Y$. Assume that $a_E (X, \fra) \le 1$ for every $f$-exceptional divisor $E$. 
If $E_1$, $E_2$, and $E_3$ are $f$-exceptional prime divisors that satisfy the following conditions: 
\begin{enumerate}
\item $E_1$ meets both $E_2$ and $E_3$, and
\item $E_1$ has the self-intersection number $E_1 ^2 \le -2$,
\end{enumerate}
then $a_1 \le \frac{1}{2}(a_2 + a_3)$, where $a_i = a_{E_i} (X, \fra)$. 
\end{proposition}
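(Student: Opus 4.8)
The plan is to compare two estimates for the intersection number $W\cdot E_1$ on $Y$, where $W$ is the effective $\RR$-divisor recording the codimension-one behaviour of $\fra$. Precisely, write $\fra=\prod_{j=1}^r\fra_j^{\lambda_j}$ and let $W=\sum_j\lambda_j W_j$, where $\mult_P(W)=\ord_P(\fra)=\sum_j\lambda_j\ord_P(\fra_j)$ for every prime divisor $P$ on $Y$; thus $W$ is the divisorial part of ``$\fra\cdot\cO_Y$'', and $a_E(X,\fra)=k_E+1-\mult_E(W)$ for every prime divisor $E$ on $Y$. Two inputs are immediate. First, since $E_1$ is $f$-exceptional, $f^*K_X\cdot E_1=K_X\cdot f_*E_1=0$, so $K_Y\cdot E_1=K_{Y/X}\cdot E_1=\sum_E k_E\,(E\cdot E_1)$, the sum running over the $f$-exceptional prime divisors. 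Second, adjunction on the smooth surface $Y$ gives $(K_Y+E_1)\cdot E_1=2p_a(E_1)-2\ge -2$, i.e.\ $K_Y\cdot E_1\ge -2-E_1^2$.

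Next I would establish the key inequality $W\cdot E_1\le 0$, which I expect to be the main obstacle: because $\fra$ is an ideal rather than a Cartier divisor, $E_1$ itself may occur in $W$ (exactly when $\ord_{E_1}(\fra)>0$), so mere effectivity of $W$ does not suffice. If each $\fra_j\cdot\cO_Y$ is invertible this is clear: $Y$ then dominates each $\Bl_{\fra_j}(X)$, and $-W$ is a nonnegative combination of pull-backs of the relatively ample exceptional Cartier divisors, hence $f$-nef, so it is nonnegative on the $f$-contracted curve $E_1$. In general I would choose $g\colon Y'\to Y$, a composition of blow-ups at points with each $\fra_j\cdot\cO_{Y'}$ invertible, let $W'$ be the analogue of $W$ on $Y'$ and $\widetilde E_1$ the strict transform of $E_1$; then $W'\cdot\widetilde E_1\le 0$ by the previous case (note that $\widetilde E_1$ is $(f\circ g)$-exceptional), while $g^*W\le W'$ with difference supported on the $g$-exceptional locus, hence not containing $\widetilde E_1$. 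The projection formula then gives $W\cdot E_1=g^*W\cdot\widetilde E_1\le W'\cdot\widetilde E_1\le 0$.

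With these in hand the conclusion is a short computation. Writing $\mult_E(W)=k_E+1-a_E$ on the $f$-exceptional divisors, using that the non-exceptional part of $W$ is effective and does not contain $E_1$, and using the hypothesis $a_E\le 1$ to discard the nonnegative terms $(1-a_E)(E\cdot E_1)$ with $E$ $f$-exceptional and $E\notin\{E_1,E_2,E_3\}$, one gets
$$0\ \ge\ W\cdot E_1\ \ge\ (K_Y\cdot E_1)+(1-a_1)E_1^2+(1-a_2)(E_2\cdot E_1)+(1-a_3)(E_3\cdot E_1).$$
Since $E_1$ meets $E_2$ and $E_3$ we have $(E_2\cdot E_1),(E_3\cdot E_1)\ge 1$, and $1-a_2,1-a_3\ge 0$; substituting also $K_Y\cdot E_1\ge -2-E_1^2$, the $E_1^2$ terms cancel and we are left with $a_1(-E_1^2)\le a_2+a_3$. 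Finally $-E_1^2\ge 2$ by hypothesis (2), and $a_1,a_2,a_3\ge 0$ because $(X,\fra)$ is log canonical, so $2a_1\le a_1(-E_1^2)\le a_2+a_3$, which is the asserted inequality. The only subtle point in the whole argument is the non-positivity $W\cdot E_1\le 0$; everything else is the standard adjunction computation as in \cite[Proposition~2.37]{Kollar3}.
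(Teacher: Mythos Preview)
Your proof is correct, and the final adjunction computation is essentially the one in the paper. The only difference is in how the $\RR$-ideal is converted into divisorial data on $Y$. The paper does not work with your $W$ at all: instead it replaces $\fra$ by an $\RR$-divisor $\Delta$ on $X$ built from general elements of the $\fra_j$ (citing \cite[Lemma~4.2]{Nak2}), arranged so that $a_E(X,\Delta)=a_E(X,\fra)$ for every $f$-exceptional $E$. One then has the \emph{equality} $f^*(K_X+\Delta)\cdot E_1=0$, and expanding $f^*(K_X+\Delta)=K_Y+\widetilde{\Delta}+\sum_i(1-a_i)E_i$ yields exactly the terms you obtain, with $\widetilde{\Delta}\cdot E_1\ge 0$ playing the role of your ``non-exceptional part of $W$''. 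Your route keeps $\fra$ and instead proves the inequality $W\cdot E_1\le 0$ via relative nefness of $-W$ (passing to a further blow-up when $\fra\cdot\cO_Y$ is not invertible). Both reductions are short; the paper's is the standard general-elements trick, while yours is self-contained and avoids the auxiliary lemma at the cost of the extra blow-up step.
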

\begin{proof}
Since the statement is local, we may assume that $X$ is affine. 
We may write $\fra = \prod \fra _i ^{\lambda _i}$ 
for nonzero ideal sheaves $\fra _i$ and $\lambda _i \in \RR _{>0}$. 
We fix a positive integer $c$ which satisfies $c \ge \lambda _i$ for every $i$. 
Take general elements $f_{i1}, \ldots , f_{ic} \in \fra _i$, and 
let $D_{i1}, \ldots, D_{ic}$ be the corresponding effective Cartier divisors. 
If $\Delta = \frac{1}{c} \sum _{i,j} \lambda _i D_{ij}$, then
$(X, \Delta)$ is log canonical and $a_{E} (X, \Delta) = a_{E} (X, \fra)$ 
for every $f$-exceptional divisor $E$ (see  \cite[Lemma 4.2]{Nak2}). 

Let $\{ E_i \}$ be the set of all $f$-exceptional divisors. 
We write 
\[
f^* (K_X + \Delta) = K_Y + \widetilde{\Delta} +\sum _i (1- a_i) E_i, 
\]
where $\widetilde{\Delta}$ is the strict transform of $\Delta$ and $a_i = a_{E_i} (X, \Delta)$. 
Note that $1 - a_i \ge 0$ for every $i$, by assumption. 
We have 
\begin{align*}
0 = f^* (K_X + \Delta) \cdot E_1
= &(K_Y + E_1) \cdot E_1 + \widetilde{\Delta} \cdot E_1 - a_1 E_1 ^2 \\ &+ (1 - a_2) E_1 \cdot E_2 
+ (1 - a_3) E_1 \cdot E_3 + \sum _{i \not = 1, 2, 3} (1- a_i) E_1 \cdot E_i. 
\end{align*}
It is clear that we have
\[
(K_Y + E_1) \cdot E_1 \ge -2, \quad \widetilde{\Delta} \cdot E_1 \ge 0 , \quad
\sum _{i \not = 1, 2, 3} (1- a_i) E_1 \cdot E_i \ge 0,
\]
and the assumptions (1) and (2) give
\[
- a_1 E_1 ^2 \ge 2 a_1, \quad (1 - a_2) E_1 \cdot E_2 \ge 1 - a_2, \quad (1 - a_3) E_1 \cdot E_3 \ge 1 - a_3.
\]
By combining all these, we obtain the desired inequality $2 a_1 - a_2 - a_3 \le 0$. 
\end{proof}

\begin{proof}[Proof of Theorem \ref{dim2}]
Let $X$ be a klt surface, $x \in X$ a point and $I \subset \RR _{\ge 0}$ a finite set. 
The non-log-canonical case follows from Proposition \ref{prop_LC}, 
hence we only consider the log canonical case. 

Let $\fra$ be an $\RR$-ideal on $X$ with exponents in $I$ such that $(X,\fra)$ is log canonical around $x$.
Let $X_0\to X$ be the minimal resolution of $X$.
Suppose that $\mld _x (X, \fra)$ is not computed by any $(X_0 \to X)$-exceptional divisor. 
Then, there is a sequence of blow-ups
\[
X_n \to X_{n-1} \to \cdots \to X_1 \to X_0 \to X, 
\]
with the following properties:
\begin{enumerate}
\item For every $i$ with $0 \le i \le n-1$, the map $X_{i+1} \to X_i$ is the blow-up of $X_i$ at a point $p_i \in X_i$ 
with exceptional divisor $E_i \subset X_{i+1}$. 
\item $p_0$ maps to $x$ by the map $X_0 \to X$.   
\item $p_{i+1}$ maps to $p_i$ by the map $X_{i+1} \to X_i$ for every $i$ with $0 \le i \le n-2$ (equivalently, $p_{i+1} \in E_i$).
\item $a_{E_i}(X, \fra) > \mld _x (X, \fra)$ for $i$ with $0 \le i \le n-2$ and 
$a_{E_{n-1}} (X, \fra) = \mld _x (X, \fra)$. 
\end{enumerate}

The next lemma gives a bound for $k_{E_{n-1}}$ in terms of $n$. 
\begin{lemma}\label{lemma:n_to_k}
With the above notation, we have $k_{E_{n-1}} \le 2^{n-1}$. 
\end{lemma}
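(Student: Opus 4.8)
The plan is to prove by induction on $i$ that $k_{E_i}\le 2^i$ for $0\le i\le n-1$; applying this with $i=n-1$ gives the lemma. Throughout, write $g_i\colon X_i\to X$ for the composition and $\sigma_i\colon X_{i+1}\to X_i$ for the blow-up of $p_i$. Note that each $X_i$ is a smooth surface (blow-up of a smooth surface at a point), and that the union of the $g_i$-exceptional prime divisors on $X_i$ is a simple normal crossings divisor: this holds for $X_0\to X$ because $X_0\to X$ is the minimal resolution of a klt (hence rational) surface singularity, and it is preserved under blowing up a closed point. In particular, at most two $g_i$-exceptional divisors pass through $p_i$, and each of them is smooth there.

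First I would record the basic discrepancy computation. Since $\sigma_i$ is the blow-up of a smooth surface at a point, $K_{X_{i+1}}=\sigma_i^*K_{X_i}+E_i$; subtracting $g_{i+1}^*K_X=\sigma_i^*g_i^*K_X$ gives $K_{X_{i+1}/X}=\sigma_i^*K_{X_i/X}+E_i$. Writing $K_{X_i/X}=\sum_D k_D\, D$, the sum over the $g_i$-exceptional prime divisors $D$ on $X_i$, and using $\sigma_i^* D=\widetilde{D}+(\mult_{p_i}D)\,E_i$ with $\mult_{p_i}D\in\{0,1\}$ (each $D$ being smooth at $p_i$), one obtains the recursion
$$k_{E_i}=1+\sum_{D\,\ni\, p_i}k_D,\qquad \#\{D\ni p_i\}\le 2.$$
The divisors $D$ occurring here are of two kinds: strict transforms of the $X_0\to X$-exceptional divisors, for which $k_D\le 0$ (by the negativity lemma: $K_{X_0/X}$ is $g_0$-nef and $g_0$-exceptional, hence anti-effective, since $X_0\to X$ is minimal), and strict transforms of $E_0,\dots,E_{i-1}$, whose discrepancies over $X$ are unchanged by further blow-ups.

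Now I would run the induction. For $i=0$, only divisors of the first kind can contain $p_0$, so $k_{E_0}\le 1+0=1=2^0$. For $i\ge 1$, condition (3) forces $p_i\in E_{i-1}$, so $E_{i-1}$ is one of the (at most two) exceptional divisors through $p_i$; let $D'$ be the other one if present, and set $k_{D'}:=0$ otherwise. If $D'$ is of the first kind then $k_{D'}\le 0$; and if $D'$ is the strict transform of some $E_j$, then necessarily $j\le i-2$ (as $E_{i-1}$ was just created on $X_i$), so $k_{D'}=k_{E_j}\le 2^j\le 2^{i-2}$ by the induction hypothesis. For $i=1$ only the first possibility can occur, whence $k_{E_1}\le 1+k_{E_0}\le 2=2^1$; for $i\ge 2$ we get
$$k_{E_i}=1+k_{E_{i-1}}+k_{D'}\le 1+2^{i-1}+2^{i-2}\le 2^i,$$
the last step being $2^i-(1+2^{i-1}+2^{i-2})=2^{i-2}-1\ge 0$. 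This closes the induction and yields $k_{E_{n-1}}\le 2^{n-1}$.

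The arithmetic above is not the difficulty; the part that needs care is the structural input — that the exceptional locus stays simple normal crossings along the whole tower, so that besides $E_{i-1}$ at most one further exceptional divisor passes through $p_i$, and that this extra divisor is harmless, being either an earlier $E_j$ with $j\le i-2$ (controlled by the induction hypothesis) or an $X_0\to X$-exceptional divisor of non-positive discrepancy. It is precisely the combination of these two facts that makes the crude recursion $k_{E_i}\le 1+k_{E_{i-1}}+k_{E_{i-2}}$ close up to the clean bound $2^i$.
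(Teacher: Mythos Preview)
Your proof is correct but follows a genuinely different route from the paper's. You run an \emph{ascending} induction on $i$, bounding each $k_{E_i}$ via the recursion $k_{E_i}=1+\sum_{D\ni p_i}k_D$ together with the SNC structure of the exceptional locus (at most two components through $p_i$, one of which is $E_{i-1}$ and the other either has $k\le 0$ or is some earlier $E_j$ with $j\le i-2$). The paper instead runs a \emph{descending} induction, showing $\ord_{E_{n-1}}(F)\le 2^{n-1-i}$ for every prime divisor $F$ on $X_i$ exceptional over $X_0$: since each such $F$ is smooth, its pullback to $X_{i+1}$ is at most $F'+E_i$, and one concludes by induction. Then the paper writes $k_{E_{n-1}}=\ord_{E_{n-1}}(K_{X_0/X})+\sum_{i=1}^n\ord_{E_{n-1}}(E_{i-1})$, drops the first term using $K_{X_0/X}\le 0$, and sums the geometric series. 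Your argument yields the extra information $k_{E_i}\le 2^i$ for every $i$, at the cost of invoking that the exceptional locus of the minimal resolution of a klt surface singularity is SNC (which is true, e.g.\ via rationality and Artin's criterion, though your parenthetical ``klt hence rational'' is doing real work here). The paper's argument is slightly more self-contained in that it only uses smoothness of each $E_j$, which is automatic, and never needs to control how many $(X_0\to X)$-exceptional curves pass through $p_0$.
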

\begin{proof}
We first show that $\ord _{E_{n-1}} F \le 2^{n-1-i}$ for every prime divisor $F$ on $X_i$ 
which is exceptional over $X_0$ with $0 \le i \le n-1$. 
We argue by descending induction on $i$.
The case $i = n-1$ is trivial since each exceptional prime divisor over $X_0$ is smooth. 
If $i < n-1$, then the pull-back of $F$ to $X_{i+1}$ is either equal to the strict transform $F'$ of $F$ on $X_{i+1}$ or 
it is equal to $F' + E_{i}$. By induction, we conclude that 
$$\ord _{E_{n-1}} (F) \le \ord _{E_{n-1}} (F' + E_{i}) \le 2 \cdot 2^{n-2-i} = 2^{n-1-i}.$$ 

Note now that we have
\[
k_{E_{n-1}} = \ord _{E_{n-1}} (K_{X_n / X}) = \ord _{E_{n-1}} (K_{X_0 / X}) + \sum _{i = 1} ^n \ord _{E_{n-1}} (K_{X_i / X_{i-1}}). 
\]
On the other hand, since $X_0$ is the minimal resolution of $X$, we have $K_{X_0/X}\leq 0$, hence
$\ord _{E_{n-1}} (K_{X_0 / X}) \le 0$. Using the assertion at the beginning of the proof, we conclude
\begin{align*}
k_{E_{n-1}}\leq \sum _{i = 1} ^n \ord _{E_{n-1}} (K_{X_i / X_{i-1}}) = \sum _{i = 1} ^n \ord _{E_{n-1}} (E_{i-1}) \le 1 + \sum _{i = 1} ^{n-1} 2^{n-1-i} = 2^{n-1}, 
\end{align*}
which gives the desired inequality.
\end{proof}

Returning to the proof of Theorem~\ref{dim2}, it follows from
Lemma~\ref{lemma:n_to_k} that in order to conclude the proof of the theorem
it is enough to prove the following lemma, giving a bound on the number $n$ of blow-ups of $X_0$.
\end{proof}

\begin{lemma}\label{lemma:bound_n}
There exists a positive integer $\ell (I)$ depending on the finite set $I$ that satisfies the following condition: 
for every $\RR$-ideal $\fra$ on $X$ with exponents in $I$, 
if $(X, \fra)$ is log canonical and $\mld_x (X, \fra)$ is not computed by any $(X_0 \to X)$-exceptional divisor, 
then for every sequence of blow-ups satisfying the condition (1)-(4) above, we have
$n \le \ell(I)$. 
\end{lemma}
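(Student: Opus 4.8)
The plan is to use the convexity of Proposition~\ref{proposition:conv} to show that along the chain the log discrepancies decrease so slowly that a long chain is impossible.

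First I would record the combinatorial picture on $X_n$: the divisors $E_0,\dots,E_{n-1}$ form a \emph{path} in the dual graph of the exceptional locus over $X$, since each blow-up of a point of the current exceptional locus either appends a leaf or subdivides an edge; one endpoint of this path is $E_0$ (blowing up points of the $E_i$ never moves $E_0$ to the interior), and either $E_{n-1}$ is the other endpoint — the case when $p_{n-1}$ is a free point of $E_{n-2}$ — or $E_{n-1}$ is an interior vertex whose two neighbours are $E_{n-2}$ and one further divisor through $p_{n-1}$. Because $X_0\to X$ is the minimal resolution, every $(X_0\to X)$-exceptional divisor has self-intersection $\le -2$ on $X_n$, and so does each $E_i$ with $i\le n-2$; only $E_{n-1}$, just created, has self-intersection $-1$.

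Next I would apply convexity to the values $a_i:=a_{E_i}(X,\fra)$ along this path. After the reduction ensuring that the relevant log discrepancies are $\le 1$ — e.g. by replacing $\fra$ with $\fra\cdot\frm_x^{\delta}$ for a small $\delta$ and then restricting to the sub-chain where the values are $\le 1$, or by passing to a suitable intermediate model between $X_0$ and $X_n$ — Proposition~\ref{proposition:conv} applies at every interior vertex of the path other than $E_{n-1}$. Hence the sequence of values of $a$ along the path is convex with minimum $\mld_x(X,\fra)$ attained at $E_{n-1}$; a finite convex sequence with minimum at an endpoint is monotone with monotone increments, so along the sub-path from $E_0$ to $E_{n-1}$ — of length $n-1$, or, when $E_{n-1}$ is interior, along the longer of its two arms (which still ends at $E_0$ and has length $\ge (n-1)/2$) — the values strictly decrease to $\mld_x(X,\fra)$ with non-increasing decrements. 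Since $E_0$ is obtained by blowing up a point of the minimal resolution $X_0$ lying over $x$, we have $k_{E_0}\le 1$, hence $a_0\le k_{E_0}+1\le 2$; therefore the total drop along this sub-path is at most $2$, and the \emph{last} decrement — the one between $E_{n-1}$ and its neighbour on the sub-path — is $\le 2/(n-1)$ (resp.\ $\le 4/(n-1)$), hence tends to $0$ as $n$ grows.

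The remaining, and decisive, point is to bound this last decrement below by a constant $\delta_0=\delta_0(I,X,x)>0$ independent of $\fra$; then $n-1\le 2/\delta_0$ (resp.\ $4/\delta_0$) and we may take $\ell(I)$ accordingly. By the blow-up formula $a_{E_{n-1}}=2-\sum_{D\ni p_{n-1}}(1-a_D)-\ord_{E_{n-1}}(\fra)$ this decrement equals $\ord_{E_{n-1}}(\fra)-1$ in the free case, and $\ord_{E_{n-1}}(\fra)-a_{D^\ast}$ otherwise; in either case it is a positive real of the form $c+\sum_j\lambda_j\nu_j$, where the $\lambda_j$ are the finitely many exponents in $I$, the $\nu_j$ are non-negative integers, and $c$ lies in a fixed finitely generated subgroup of $\RR$ (generated by $1$ and the discrepancies of the minimal resolution of $(X,x)$, whose denominators are controlled by the $\QQ$-Gorenstein index of $X$). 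Using the bound on the total drop, together with $\mld_x(X,\fra)\ge 0$, one confines $\ord_{E_{n-1}}(\fra)$ and hence all the $\nu_j$ (and the auxiliary quantity $a_{D^\ast}$) to a bounded range, so the decrement ranges over a \emph{finite} set of positive reals and is thus bounded below by its minimum $\delta_0>0$. I expect the genuine difficulty to be exactly here: a priori $c+\sum_j\lambda_j\nu_j$ with irrational $\lambda_j$ can be an arbitrarily small positive number, so the whole point is that the boundedness forced by $a_0\le 2$ confines all the parameters and thereby reduces the decrement to a finite set — with the interior-versus-leaf dichotomy, where one must also bound $a_{D^\ast}$, being the more delicate instance; the dual-graph bookkeeping of the first step is by comparison routine.
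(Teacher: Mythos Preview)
The first step contains a genuine error: the $(X_n\to X_0)$-exceptional divisors $E_0,\dots,E_{n-1}$ need \emph{not} form a path. The operations ``append a leaf at $E_{i-1}$'' and ``subdivide an edge at $E_{i-1}$'' preserve the path property only when $E_{i-1}$ is already a leaf, but after a satellite blow-up $E_{i-1}$ has degree $2$. For instance: blow up a smooth point to get $E_0$; blow up a point of $E_0$ to get the chain $E_0-E_1$; blow up the node $E_0\cap E_1$ to get $E_0-E_2-E_1$; now blow up a \emph{free} point of $E_2$. The resulting dual graph is a tripod with $E_2$ of degree~$3$, and the unique path from $E_3=E_{n-1}$ to $E_0$ has length $2$ although $n=4$. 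In general the dual graph is only a tree with every vertex of degree $\le 3$, and the distance from $E_{n-1}$ to $E_0$ can be much smaller than $n$; so even if you bounded the length of the chain from $E_{n-1}$ to $E_0$ (or of every chain emanating from $E_{n-1}$), this would not by itself bound $n$. The paper deals with exactly this: it first proves directly that $a_E(X,\fra)\le 1$ for all $(X_n\to X)$-exceptional $E$ (your $\fra\cdot\frm_x^\delta$ trick is not a substitute, since it alters which divisor computes the mld); then, as you do, Proposition~\ref{proposition:conv} gives strict monotonicity of the log discrepancies along any chain from $E_{n-1}$; then Kawakita's discreteness theorem bounds the chain length by $\ell_1(I):=\#U(I)$, where $U(I)\subset[0,1]$ is the finite set of possible log discrepancies for exponents in $I$; and finally an elementary graph lemma (Lemma~\ref{lem_graph}) converts ``every chain from $E_{n-1}$ has length $<\ell_1(I)$'' into the vertex bound $n<\tfrac12(3^{\ell_1(I)}-1)$.

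Your attempt to replace Kawakita's input by a direct lower bound on the first increment $a_{F_1}-a_{E_{n-1}}$ is also incomplete. In the satellite case this increment equals $\sum_j\lambda_j m_j - a_{D^*}$; when $D^*$ is one of the earlier $E_j$ (rather than a divisor on the minimal resolution) the value $a_{D^*}$ is itself an unknown log discrepancy, and nothing you have said confines it to a finite set --- ``bounded range'' is not enough when the $\lambda_j$ may be irrational. So the claim that the increment ranges over a finite set of positive reals is circular unless you already invoke a discreteness statement of Kawakita's type.
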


\begin{proof}
If $\mld _x (X, \fra) > 1$, then it is known that $X$ is smooth at $x$ 
(hence $X_0 = X$) and $n=1$ (see \cite[Theorem 4.5]{KollarMori} and its proof). 
From now on we suppose $\mld _x (X, \fra) \le 1$. 

We begin by proving the following assertion, which we will need 
in order to apply Proposition \ref{proposition:conv}:
\begin{equation}\label{eq:eff}
a_{E} (X, \fra) \le 1\quad\text{for every}\quad (X_n \to X)\text{-exceptional divisor}\,\,E. 
\end{equation}

Since $X_0 \to X$ is the minimal resolution, we have $K_{X_0/X}\leq 0$, hence
$$a_{E} (X, \fra) \le a_{E} (X) \le 1$$ 
 for every $(X_0 \to X)$-exceptional divisor $E$. 
Suppose that $j$ is the smallest index with $a_{E_j} (X, \fra) > 1$. 
We define an $\RR$-ideal $\fra _{j}$  on $X_{j}$ as follows:
if $\frb$ is the $\RR$-ideal on $X_j$ such that
$$\fra\cdot\cO_{X_j}=\frb\cdot\prod_E\cO_X(-E)^{\ord_E(\fra)},$$
where the product is over the  $(X_{j} \to X)$-exceptional divisors, then
$$\fra_j=\frb\cdot\prod_E\cO_X(-E)^{\ord_E(\fra)-k_E}$$
(note that this is well-defined since $\ord_E(\fra)-k_E=1-a_E(X,\fra)\geq 0$
for every such $E$). It follows from definition that
$$a_E (X, \fra) = a_E (X_{j}, \fra _{j})\quad\text{for every divisor}\,\,E\,\,\text{over}\,\,X.$$ 

Since $a_{E_j} (X_j, \fra_{j}) = a_{E_j} (X, \fra) > 1$, we have 
$
\mult _{p_{j}} \fra _{j} < 1. 
$
By \cite[Theorem 4.5]{KollarMori}, it follows that 
$
\mld _{p_{j}} (X_{j}, \fra _{j}) > 1. 
$
However, this contradicts
\[
\mld _{p_{j}} (X_{j}, \fra _{j}) 
\le a_{E_{n-1}} (X_{j}, \fra _{j}) = a_{E_{n-1}} (X, \fra) = \mld _x (X, \fra) \le 1. 
\]
This completes the proof of (\ref{eq:eff}).

Suppose now that $F_{0}, F_{1}, \ldots, F_{c}$ are $(X_n \to X_0)$-exceptional divisors 
that satisfy the following conditions: 
\begin{enumerate}
\item[($\alpha$)] $F_0 = E_{n-1}$ and $F_{i} \not = E_{n-1}$ for $1 \le i \le c$, and
\item[($\beta$)] $F_{i}$ meets $F_{i+1}$ for $0 \le i \le c-1$. 
\end{enumerate}
In this case we have the following sequence of inequalities:
\begin{equation}\label{claim2}
a_{E_{n-1}} = a_{F_0} < a_{F_1} < \cdots < a_{F_c}, 
\end{equation}
where we set $a_{F_i} = a _{F_i} (X, \Delta)$. 
In order to see this, note first that by the assumption on the sequence of blow-ups, we have
$a_{E_{n-1}} < a_{F}$ for every $(X_n \to X_0)$-exceptional divisor $F$ except for $F = E_{n-1}$. 
This gives the first inequality $a_{F_0} < a_{F_1}$. 
We next use the fact that $F ^2 \le -2$ for every $(X_n \to X_0)$-exceptional divisor $F$, except for $F = E_{n-1}$;
in particular, we have $F_1 ^2 \le -2$. 
It follows from Proposition \ref{proposition:conv} that
$$a_{F_1}\leq\frac{1}{2}(a_{F_0}+a_{F_2})<\frac{1}{2}(a_{F_1}+a_{F_2}).$$
Therefore $a_{F_1}<a_{F_2}$. We deduce in this way (\ref{claim2}) by repeatedly applying 
Proposition \ref{proposition:conv}.

By the discreteness of log discrepancies proved by Kawakita \cite{Kawakita2}, 
there exists a finite subset $U(I) \subset [0,1]$ depending only on $I$ satisfying the following condition:
\begin{itemize}
\item For every $\RR$-ideal $\fra$ with exponents in $I$ such that $(X, \fra)$ is log canonical, 
if $a_F (X, \fra) \in [0,1]$, then $a_F (X, \fra) \in U(I)$. 
\end{itemize}
Set $\ell_1 (I) := \# U(I)$. 
By the choice of $\ell_1 (I)$ and the bound (\ref{eq:eff}), if we can find a sequence $F_0, \ldots, F_c$ of exceptional divisors 
that satisfies the conditions ($\alpha$) and $(\beta)$ above, with $c \ge \ell_1 (I)$, we contradict the sequence of inequalities (\ref{claim2}).

The graph-theoretic Lemma~\ref{lem_graph} below thus implies
$n < \frac{1}{2}(3^{\ell_1 (I)} -1)$.
Indeed, we apply the lemma for the dual graph $\Gamma$ of $(X_n \to X_0)$-exceptional divisors 
(the vertices of this graph are given by these exceptional divisors and two vertices are connected by an edge if and only if 
the divisors intersect on $X_n$); note that $\Gamma$ 
has $n$ vertices and each vertex has degree at most three. This completes the proof of Lemma~\ref{lemma:bound_n}.
\end{proof}

\begin{lemma}\label{lem_graph}
Let $\ell$ be a positive integer and
$G$ be a connected graph of order $n \ge \frac{1}{2}(3^{\ell} -1)$. 
If every vertex of $G$ has degree $\leq 3$, 
then for any vertex $v$ of $G$, the graph $G$ contains a chain of length $\ell$ containing $v$ with degree 1.
\end{lemma}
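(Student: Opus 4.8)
The plan is to stratify $G$ by graph-distance from $v$ and run an elementary counting argument, as in the analysis of a breadth-first search tree. Throughout I read ``a chain of length $\ell$ containing $v$ with degree $1$'' as a path $v=u_0,u_1,\dots,u_\ell$ with $\ell$ edges and pairwise distinct vertices in which $v$ is one of the two endpoints. For $k\ge 0$ set $V_k:=\{w\in V(G):\mathrm{dist}_G(v,w)=k\}$, so $V_0=\{v\}$, and note that $V(G)=\bigcup_{k\ge 0}V_k$ because $G$ is connected.

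The first step is the observation that if $G$ contains no chain of the required kind, then $V_k=\emptyset$ for every $k\ge\ell$. Indeed, a vertex $w$ at distance $k\ge\ell$ from $v$ lies on a geodesic $v=u_0,u_1,\dots,u_k=w$, and the initial segment $u_0,u_1,\dots,u_\ell$ is then a path with $\ell$ edges, distinct vertices, and $v=u_0$ an endpoint, i.e.\ a chain of the forbidden type. Hence, under the assumption that no such chain exists, $n=\sum_{k=0}^{\ell-1}|V_k|$.

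The second step is the layer estimate. Since $\deg v\le 3$ we have $|V_1|\le 3$, and for $k\ge 1$ each vertex of $V_k$ has at least one neighbour in $V_{k-1}$, hence at most two in $V_{k+1}$; counting edges between consecutive layers gives $|V_{k+1}|\le 2|V_k|$, so $|V_k|\le 3\cdot 2^{k-1}\le 3^k$ for $k\ge 1$. Summing, $n\le\sum_{k=0}^{\ell-1}3^k=\tfrac{1}{2}(3^\ell-1)$, and this is strict once $\ell\ge 3$: equality would force $|V_k|=3^k$ for all $k\le\ell-1$, which is impossible since $|V_2|\le 2|V_1|\le 6<9$ (more quantitatively $n\le 3\cdot 2^{\ell-1}-2<\tfrac{1}{2}(3^\ell-1)$). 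This contradicts the hypothesis $n\ge\tfrac{1}{2}(3^\ell-1)$, so a chain of the desired kind must exist. A variant avoiding the explicit count is induction on $\ell$: deleting $v$ leaves at most three connected components, one of which --- say $C$, containing a neighbour $w$ of $v$ --- has at least $\tfrac{1}{3}(n-1)\ge\tfrac{1}{2}(3^{\ell-1}-1)$ vertices and still has maximum degree $\le 3$; the inductive hypothesis produces a chain of length $\ell-1$ from $w$ inside $C$, and prepending the edge $vw$ (legitimate because that chain lies entirely in the component $C$ of $G-v$) gives the required chain of length $\ell$ ending at $v$.

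I do not expect a genuine obstacle here: the whole argument rests on the first step, the simple observation that the initial segment of a geodesic out of $v$ is already a chain of the forbidden type, which is what converts the size hypothesis into the conclusion. The only delicate point is the behaviour for very small $\ell$ (for $\ell=1$ the hypothesis reduces to $n\ge 1$, which does not by itself force a chain of length $1$); but in the intended application $\ell$ is $\ell_1(I)=\#U(I)$ and, as noted, the counting is already strict for $\ell\ge 3$, so this is harmless.
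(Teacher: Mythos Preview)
Your proof is correct, and in fact it contains two arguments. The ``variant'' you sketch at the end is exactly the paper's proof: remove $v$, pass to the largest of the at most three components of $G-v$ (which has order at least $\tfrac{1}{3}(n-1)\ge\tfrac{1}{2}(3^{\ell-1}-1)$), apply the induction hypothesis at a neighbour $w$ of $v$ in that component, and prepend the edge $vw$. Your primary approach, by contrast, is genuinely different: stratifying by distance from $v$ and using the Moore-type estimate $|V_k|\le 3\cdot 2^{k-1}$ gives $n\le 3\cdot 2^{\ell-1}-2$ under the assumption that no such chain exists, which is strictly below $\tfrac{1}{2}(3^\ell-1)$ once $\ell\ge 3$. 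This is sharper than the paper's bound and shows that the hypothesis on $n$ is far from optimal; the inductive argument, on the other hand, makes transparent where the threshold $\tfrac{1}{2}(3^\ell-1)$ comes from.

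Your caveat about small $\ell$ is well taken and in fact applies to the paper's formulation too. Reading ``length'' as number of edges, the statement fails not only at $\ell=1$ (the one-point graph) but also at $\ell=2$: take $G=K_{1,3}$ with $v$ the centre, so $n=4=\tfrac{1}{2}(3^2-1)$ and there is no path of two edges out of $v$. The paper declares the case $\ell=1$ ``trivial'', which is only literally correct if ``length'' counts vertices. Either way this is immaterial for the application in the paper, where one can freely enlarge $\ell_1(I)$ by one.
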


\begin{proof}
We argue by induction on $\ell$, the case $\ell = 1$ being trivial. 
Consider the graph $G'$ obtained by removing the vertex $v$ and the edges containing $v$. 
Since $G$ is connected and ${\rm deg}(v)\leq 3$, 
the number of the connected components of $G'$ is at most three. 
Let $G''$ be a connected component of $G'$ of order at least 
$\frac{1}{3}\big( \frac{1}{2}(3^{\ell} -1) -1 \big) = \frac{1}{2}(3^{\ell -1} -1)$. 
Let $v'$ be a vertex in $G''$ which is connected to $v$ by an edge in $G$. 
By induction, $G''$ contains a chain of length $\ell -1$  containing $v'$ with degree $1$.
By adding $v$ to this chain, we obtain a chain in $G$ which contains $v$ with degree $1$.
\end{proof}

\section{A proof of the conjecture in the monomial case}

In this section we give a proof of Theorem~\ref{monomial_case}. More precisely, we prove the following result. 
A \emph{monomial} $\RR$-ideal on $\AAA^n$ is an $\RR$-ideal of the form $\fra=\prod_{j=1}^r\fra_j^{\lambda_j}$,
where each ideal $\fra_j$ is generated by monomials.

\begin{theorem}\label{monomial_case_version2}
Given a positive integer $n$ and a finite subset $I\subset \RR_{\geq 0}$, there is a positive integer $\ell$ ${\rm (}$depending on  $n$ and $I$${\rm )}$ such that
for every monomial $\RR$-ideal $\fra$ on $\AAA^n$ with exponents in $I$, there is a divisor $E$
that computes $\mld_0(X,\fra)$ and such that $k_E\leq \ell$. 
\end{theorem}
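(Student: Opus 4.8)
The plan is to use the combinatorial description of minimal log discrepancies for monomial ideals via toric geometry. First I would recall that a monomial $\RR$-ideal $\fra=\prod_j\fra_j^{\lambda_j}$ on $\AAA^n$ has a well-defined Newton polyhedron: for each $\fra_j$ let $P_j\subset\RR^n_{\geq 0}$ be the convex hull of the exponents of the monomials it contains (plus the positive orthant), and set $P=\sum_j\lambda_j P_j$, so that for a toric divisor $E_v$ corresponding to a primitive vector $v\in\NN^n$ one has $\ord_{E_v}(\fra)=\min_{u\in P}\langle v,u\rangle=:\phi_P(v)$, while $k_{E_v}=\langle v,\mathbf 1\rangle-1$ where $\mathbf 1=(1,\dots,1)$. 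Hence $a_{E_v}(\AAA^n,\fra)=\langle v,\mathbf 1\rangle-\phi_P(v)$. Since $0$ is a torus-invariant point, $\mld_0(\AAA^n,\fra)$ is computed by a toric divisor with center $0$, i.e. by some $v$ in the interior of $\RR^n_{\geq 0}$; this reduces the problem to showing that the minimum of the piecewise-linear function $v\mapsto\langle v,\mathbf 1\rangle-\phi_P(v)$ over primitive interior lattice points is attained at a $v$ with $\langle v,\mathbf 1\rangle$ bounded in terms of $n$ and $I$ (note $k_{E_v}=\langle v,\mathbf 1\rangle-1$, so bounding $k_E$ is exactly bounding $|v|_1$).

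The key point is that the function $\phi_P$ is linear on each cone of the normal fan $\Sigma_P$ of $P$, and on a maximal cone $\sigma$ spanned by primitive rays $v_1,\dots,v_m$ it takes the form $\phi_P(v)=\langle w_\sigma,v\rangle$ for the vertex $w_\sigma$ of $P$ dual to $\sigma$. So on $\sigma$ we are minimizing the \emph{linear} function $v\mapsto\langle\mathbf 1-w_\sigma,v\rangle$; its minimum over the lattice points of $\sigma$ (intersected with the interior orthant) is attained either at $0$ — meaning the relevant infimum on $\sigma$ is governed by the boundary, i.e. by a lower-dimensional face — or, when $\mathbf 1-w_\sigma$ lies in the dual cone $\sigma^\vee$, at a vertex of a fundamental-domain-type region, which one can take among the Hilbert basis elements of $\sigma$. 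The crucial estimate is therefore: the Hilbert basis of $\sigma$ consists of lattice points with $\ell_1$-norm bounded in terms of $n$ and the (bounded) denominators of the vertices $w_\sigma$. Here is where finiteness of $I$ enters: the vertices of $P=\sum_j\lambda_jP_j$ are nonnegative-integer combinations of the $\lambda_j\in I$ with coefficients that are coordinates of vertices of the individual Newton polyhedra, and a classical bound (e.g. via the fact that a vertex of $P$ with $a_{E_v}<$ (current mld candidate) forces $\phi_P(v)$ small, hence $v$ pairs small against all of $P$, hence $P$ itself is "small" in the relevant direction) lets one bound the combinatorial complexity. I would argue: if $\mld_0(\AAA^n,\fra)\geq 0$, pick $v$ computing it; after a unimodular change of coordinates I may assume $v$ lies in a simplicial cone on which $\phi_P$ is linear of slope $w$, and then $\langle\mathbf 1-w,v\rangle=\mld_0\in[0,n)$, while each coordinate $w_i=\phi_P(e_i')$ is a value of the support function at a standard basis vector, hence an $I$-integer bounded by something depending only on $I$ (using that $\mld_0\geq0$ forces $|w|_\infty\leq$ const$(n,I)$). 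A short lattice-point lemma then produces a new $v'$ in the same cone with the same $a_{E_{v'}}$-value but $|v'|_1\leq\ell(n,I)$. For the non-log-canonical case $\mld_0=-\infty$, I instead exhibit a \emph{single} bounded toric divisor $E_v$ with $a_{E_v}<0$, which follows because non-log-canonicity means $\mathbf 1\notin P$, and separating $\mathbf 1$ from the polyhedron $P$ (whose defining inequalities have bounded complexity in terms of $n$ and $I$) yields a bounded separating $v$.

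The main obstacle I anticipate is making the bound on the Hilbert basis / the denominators of the vertices $w_\sigma$ genuinely uniform: a priori the ideals $\fra_j$ can be arbitrarily complicated monomial ideals, so their Newton polyhedra have unboundedly many facets with unboundedly large defining data, and it is not immediately clear that the \emph{particular} cone relevant to the mld has bounded complexity. The resolution should be that $\mld_0\geq 0$ (log canonicity near $0$) is a strong constraint — it forces $\mathbf 1$ to lie in (a neighborhood of) $P$, which in turn forces the vertex $w_\sigma$ nearest to $\mathbf 1$ in the relevant direction to have all coordinates $\leq n$, and then only finitely many $I$-rational points can serve as such a $w_\sigma$; combined with the fact that the computing $v$ can be chosen in the star of a bounded-index subcone, this caps $|v|_1$. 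I would isolate this as the technical heart of the argument and expect it to require the ACC-type input on the coefficients together with a careful normal-fan analysis, possibly reducing first to the case where $\fra$ is a single monomial ideal with bounded Newton polyhedron by truncating $\fra$ modulo $\frm_0^d$ via Proposition~\ref{general_properties}(iv) for a $d$ bounded in terms of the already-established Theorem~\ref{thm_bound_ord_point}.
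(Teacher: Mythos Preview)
Your toric/Newton-polyhedron approach is natural but the obstacle you flag in your last paragraph is real, and the fixes you propose do not close it. The paper's proof avoids all of this combinatorics by a short contradiction argument with two external inputs. Suppose a sequence $(\fra_m)_{m\geq 1}$ witnesses failure and write $\fra_m=\prod_{j=1}^r\fra_{m,j}^{\lambda_j}$. Maclagan's theorem (antichains of monomial ideals in $k[x_1,\dots,x_n]$ are finite) lets one pass to a subsequence with $\fra_{m,j}\supseteq\fra_{m+1,j}$ for all $m,j$; then $\mld_0(\AAA^n,\fra_m)$ is weakly decreasing, and Kawakita's finiteness of mld values on a fixed germ with exponents in a fixed finite set forces it to be eventually constant. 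But now any single divisor $E$ computing $\mld_0(\AAA^n,\fra_1)$ satisfies $a_E(\AAA^n,\fra_m)\leq a_E(\AAA^n,\fra_1)=\mld_0(\AAA^n,\fra_1)=\mld_0(\AAA^n,\fra_m)$, so the \emph{same} $E$ computes every $\mld_0(\AAA^n,\fra_m)$, contradicting $k_E\geq\ell_m\to\infty$. No normal fans, Hilbert bases, or case analysis are needed.

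On your approach, the difficulty is genuine. Already for $\fra=(x,y^N)$ on $\AAA^2$ the normal fan of $P$ contains the ray through $(N,1)$, and the cone dual to the vertex $(0,N)$ has a Hilbert basis element of $\ell_1$-norm $N+1$; so ``Hilbert basis of $\sigma$ is uniformly bounded'' fails. You then retreat to bounding only the vertex $w_\sigma$ dual to the \emph{minimizing} cone, but even granting that, the cone $\sigma$ itself is cut out by inequalities coming from the \emph{neighboring} vertices of $P$, which are not controlled; so your ``short lattice-point lemma'' would have to locate a bounded primitive $v'\in\sigma\cap\ZZ^n_{>0}$ with $\langle\mathbf 1-w_\sigma,v'\rangle$ equal to a prescribed value inside a cone of unbounded complexity, and you have not indicated how. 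Finally, the fallback via Proposition~\ref{general_properties}(iv) and Theorem~\ref{thm_bound_ord_point} is circular: Theorem~\ref{thm_bound_ord_point} bounds $\ord_E(\frm_0)=\min_i v_i$, not $k_E=\sum_i v_i-1$, and the truncation level $d$ required in Proposition~\ref{general_properties}(iv) is $d\geq\ord_E(\fra_j)/\ord_E(\frm_0)$, which for $\mld\geq 0$ is only bounded by $(k_E+1)/(\lambda_j\min_i v_i)$ --- exactly the quantity you are trying to control.
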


We will use the following result of Maclagan \cite[Theorem~1.1]{Maclagan}: given an infinite set ${\mathcal U}$ of monomial ideals in $k[x_1,\ldots,x_n]$,
then there are two ideals $I,J\in {\mathcal U}$ such that $I\subseteq J$. This implies that given any sequence $(I_m)_{m\geq 1}$ of monomial ideals in $k[x_1,\ldots,x_n]$,
there is a subsequence $(I_{j_m})_{m\geq 1}$ such that $I_{j_m}\supseteq I_{j_{m+1}}$ for all $m$. Indeed, note first that we may assume that each ideal $I$ is equal to $I_m$
for only finitely values of $m$, since otherwise our assertion is trivial. Since $k[x_1,\ldots,x_n]$ is Noetherian, we can find ideals in $\{I_m\mid m\geq 1\}$ that are maximal with respect to inclusion. 
By Maclagan's result, there are only finitely many such ideals  and by our assumption there are only finitely many $m$ with the property that $I_m$ is such a maximal ideal.
Therefore we can find $m_1\geq 1$ such that $I_{m_1}\supseteq I_m$ for infinitely many values of $m$. By repeating now the argument for the ideals $I_m$, with $m>m_1$ and
$I_m\subseteq I_{m_1}$, we obtain our assertion.

\begin{proof}[Proof of Theorem~\ref{monomial_case_version2}]
If the conclusion of the theorem fails, then there is a sequence $(\fra_m)_{m\geq 1}$ of monomial $\RR$-ideals on $\AAA^n$ and a sequence $(\ell_m)_{m\geq 1}$
with $\lim_{m\to\infty}\ell_m=\infty$ such that for every divisor $E$ over $\AAA^n$ that computes $\mld_0(\AAA^n,\fra_m)$, we have $k_E\geq\ell_m$. We will show that
this leads to a contradiction.

Let $\lambda_1,\ldots,\lambda_r$ be the elements of $I$. By assumption, we can write each $\fra_m$ as
$$\fra_m=\prod_{j=1}^r \fra_{m,j}^{\lambda_j},$$
where all $\fra_{m,j}$ are monomial ideals. As we have seen, it follows from Maclagan's result that after passing to a subsequence, we may assume
that $\fra_{m,1}\supseteq\fra_{m+1,1}$ for all $m\geq 1$. Repeating this for the $\fra_{m,2},\ldots,\fra_{m,r}$, it follows that after $r$ such steps, we may assume that
$\fra_{m,j}\supseteq\fra_{m+1,j}$ for all $m\geq 1$ and all $j$, with $1\leq j\leq r$.

In particular, it follows from Proposition~\ref{general_properties} that $({\rm mld}_0(\AAA^n,\fra_m))_{m\geq 1}$ is a weakly decreasing sequence.
On the other hand, a result of Kawakita  \cite[Theorem~1.2]{Kawakita2} says that the set of mld's on a fixed klt germ, for $\RR$-ideals with exponents in the finite set $I$,
is finite. We thus conclude that after passing one more time to a subsequence, we may assume that all $\mld_0(\AAA^n,\fra_m)$ take the same value
(possibly infinite).

 Let $E$ be a divisor over $\AAA^n$ that computes $\mld_0(\AAA^n,\fra_1)$. Given $m\geq 1$, since  $\fra_{1,j}\supseteq\fra_{m,j}$ for all $j$,
 it follows that
 $$\mld_0(\AAA^n,\fra_m)\leq k_E+1-\ord_E(\fra_m)\leq k_E+1-\ord_E(\fra_1)=\mld_0(\AAA^n,\fra_1).$$
 Therefore all the above inequalities are equalities. In particular, $E$ computes $\mld_0(\AAA^n,\fra_m)$ for all $m\geq 1$,
 a contradiction. This completes the proof of the theorem.
\end{proof}

\section{Connection with ACC}

Our goal in this section is to prove Theorem~\ref{thm_acc}, relating Conjecture~\ref{conj_main} to the
ACC conjecture.

\begin{proof}[Proof of Theorem~\ref{thm_acc}]
Suppose that we have a sequence $(\fra_i)_{i\geq 1}$ of $\RR$-ideals on $X$ with exponents in $J$ such that
each $(X,\fra_i)$ is log canonical around $x$ and with
$q_i=\mld_x(X,\fra_i)$,  the sequence $(q_i)_{i\geq 1}$ is strictly increasing. Since $q_i\leq\mld_x(X)$ for every $i$, it follows that 
$q:=\lim_{i\to\infty}q_i<\infty$. 

We may write $\fra_{i}=\prod_{j=1}^{r_i}\fra_{i,j}^{\lambda_{i,j}}$, where each $\fra_{i,j}$ is a nonzero ideal on $X$ with $x\in {\rm Cosupp}(\fra_{i,j})$ 
and each $\lambda_{i,j}$ is a nonzero element of $I$. Since $J$ is a DCC set, it follows that there is $\epsilon>0$ such that $\lambda_{i,j}\geq\epsilon$ for all $i$ and all $j$ with $1\leq j\leq r_i$. 
Let $F$ be a fixed divisor over $X$ with $c_X(F)=x$. For every $i\geq 1$, it follows from the fact that $(X,\fra_i)$ is log canonical around $x$ that
$$r_i\epsilon\leq \sum_{j=1}^{r_i}\lambda_{i,j}\leq \sum_{j=1}^{r_i}\lambda_{i,j}\cdot\ord_F(\fra_{i,j})\leq k_F+1.$$
First, this implies that the $r_i$ are bounded. Second, it implies that the $\lambda_{i,j}$ are bounded. 
After possibly passing to a subsequence, we may assume that $r_i=r$ for all $i\geq 1$. Furthermore, 
since $J$ is a DCC set, it follows that after possibly passing again to a subsequence, we may assume that
each sequence $(\lambda_{i,j})_{i\geq 1}$ is nondecreasing. Since we have seen that the sequence is bounded, it follows that
$\lambda_j:=\lim_{i\to\infty}\lambda_{i,j}<\infty$. 

We consider new $\RR$-ideals $\fra'_i=\prod_{j=1}^r\fra_{i,j}^{\lambda_j}$ for $i\geq 1$. We now show that $(X,\fra'_i)$ is log canonical around $x$ for $i\gg 0$. 
Note that also the set $J'=J\cup\{\lambda_1,\ldots,\lambda_r\}$ satisfies DCC, hence
$${\mathcal A}:=\{\lct_x(X,\frb)\mid  \frb\,\,\text{is}\,\,\text{an}\,\,\RR\text{-ideal on}\,\,X\,\,\text{with exponents in}\,\,J'\}$$
satisfies ACC (since we work on a fixed variety, this follows from  \cite[Theorem~4.2]{dFEM1}; for the general statement, see \cite[Theorem~1.1]{HMX}).
In particular, there is $M$ such that $\lct_x(X,\frb)\leq M$ for every $\RR$-ideal $\frb$ on $X$ with exponents in $J$. 
Note that we have
\begin{equation}\label{eq_limit}
\lim_{i\to\infty}(\lct_x(X,\fra'_i)-\lct_x(X,\fra_i))=0.
\end{equation}
Indeed, it follows from Proposition~\ref{general_properties} that for every $\delta>0$ and for every $i$ such that $\lambda_{i,j}\geq (1+\delta)^{-1}\lambda_j$ for all $j$, we have
$$\frac{1}{\delta+1}\cdot \lct_x(X,\fra_i) \leq \lct_x(X,\fra'_i)\leq \lct_x(X,\fra_i),$$
hence 
$$0\leq \lct_x(X,\fra_i)-\lct_x(X,\fra'_i)\leq \frac{\delta}{\delta+1}\cdot\lct_x(X,\fra_i)\leq\frac{M\delta}{\delta+1}.$$
This gives (\ref{eq_limit}). On the other hand, we have by assumption $\lct_x(X,\fra_i)\geq 1$ for all $i\geq 1$. Since the set ${\mathcal A}$ satisfies ACC,
we conclude from (\ref{eq_limit}) that $\lct_x(X,\fra'_i)\geq 1$ (hence $(X,\fra'_i)$ is log canonical around $x$) for all $i\gg 0$. After possibly ignoring the first few terms,
we may assume that $(X,\fra'_i)$ is log canonical around $x$ for every $i\geq 1$.

We now choose for every $i$ a divisor $E_i$ over $X$ which computes $\mld_x(X,\fra'_i)$. Since we assume that $X$ satisfies the assertion in 
Conjecture~\ref{conj_main} for $I=\{\lambda_1,\ldots,\lambda_r\}$,
we may and will assume that the set $\{k_{E_i}\mid i\geq 1\}$ is bounded above. Since we have
$$0\leq\mld_x(X,\fra'_i)=k_{E_i}+1-\sum_{j=1}^r\lambda_j \cdot\ord_{E_i}(\fra_{i,j}),$$
it follows that there is $B>0$ such that $\ord_{E_i}(\fra_{i,j})\leq B$ for all $i$ and $j$.
On the other hand, since $\lambda_{i,j}\leq\lambda_j$ for all $i$ and $j$, we have by Proposition~\ref{general_properties}
\begin{equation}\label{eq_bound_mld}
a_{E_i}(X,\fra'_i)=\mld_x(X,\fra'_i)\leq \mld_x(X,\fra_i)\leq a_{E_i}(X,\fra_i)=a_{E_i}(X,\fra'_i)+\sum_{j=1}^r(\lambda_j-\lambda_{i,j})\cdot\ord_{E_i}(\fra_{i,j}).
\end{equation}

Since the $\RR$-ideals $\fra'_i$ have exponents in the finite set $\{\lambda_1,\ldots,\lambda_r\}$, it follows from a result of Kawakita  \cite[Theorem~1.2]{Kawakita2}  that the set
$\{\mld_x(X,\fra'_i)\mid i\geq 1\}$ is finite. After possibly passing to a subsequence, we may thus assume that $\mld_x(X,\fra'_i)=A$ for every $i\geq 1$. 
We then conclude from (\ref{eq_bound_mld}) that 
\begin{equation}\label{eq_bound_mld2}
A\leq q_i\leq A+B\cdot \sum_{j=1}^r(\lambda_j-\lambda_{i,j}).
\end{equation}
Since $\lim_{i\to\infty}\lambda_{i,j}=\lambda_j$ for all $j$, it follows from (\ref{eq_bound_mld2}) by passing to limit that $q=A$. Using 
one more time (\ref{eq_bound_mld2}), we obtain $A\leq q_i\leq q=A$ for every $i$, hence the sequence $(q_i)_{i\geq 1}$ is constant, a contradiction.
\end{proof}

\section{Three equivalent conjectures}

We begin by stating the Generic Limit conjecture and the Ideal-adic Semicontinuity conjecture for minimal log discrepancies. 

Let $X$ be a klt variety over $k$ and $x\in X$ a closed point. 
Given a positive integer $r$ and $r$ sequences of nonzero coherent sheaves of ideals 
$(\fra^{(i)}_j)_{i\geq 1}$ on $X$, for $1\leq j\leq r$, 
the generic limit construction (see \S\ref{section:generic_limit}) gives an affine klt scheme $\widetilde{X}$, 
a closed point $\widetilde{x}\in\widetilde{X}$, 
and $r$ ideals $\widetilde{\fra}_1,\ldots,\widetilde{\fra}_r$ on $\widetilde{X}$. 
\begin{conjecture}[{Generic Limit conjecture, \cite[Conjecture 4.5]{Kawakita2}}]\label{conjecture:generic_limit}
For positive real numbers $\lambda_1, \ldots , \lambda _r$, there exists an infinite subset $S \subseteq \mathbb{Z} _{>0}$ 
such that the following hold:
\begin{itemize}
\item The ideals $\widetilde{\fra}_1,\ldots,\widetilde{\fra}_r$ are again
generic limits of the sequences of ideals $(\fra^{(i)}_1)_{i \in S},\ldots, (\fra^{(i)}_r)_{i \in S}$, and 

\item For every $i\in S$, we have
$$
\mld _{\widetilde{x}} (\widetilde{X}, \prod _{j = 1} ^r \widetilde{\fra}_j ^{\lambda _j}) = 
\mld _x (X, \prod _{j=1} ^r (\fra ^{(i)} _j)^{\lambda _j}).$$
\end{itemize}
\end{conjecture}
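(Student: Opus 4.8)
The plan is to deduce the Generic Limit conjecture from the boundedness assertion in Conjecture~\ref{conj_main}, applied to $(X,x)$ and the finite set $I=\{\lambda_1,\dots,\lambda_r\}$; this is the only ingredient that is not already available. The point of departure is the truncated form recorded in Remark~\ref{remark_properties_generic_limit}: for every $d$ there is an infinite $\Lambda_d\subseteq\ZZ_{>0}$, which may be chosen so that $\widetilde{\fra}_1,\dots,\widetilde{\fra}_r$ are still the generic limits of $(\fra^{(i)}_1)_{i\in\Lambda_d},\dots,(\fra^{(i)}_r)_{i\in\Lambda_d}$, with
\[
\mld_{\widetilde{x}}\big(\widetilde{X},\textstyle\prod_{j=1}^r(\widetilde{\fra}_j+\frm_{\widetilde{x}}^d)^{\lambda_j}\big)=\mld_x\big(X,\textstyle\prod_{j=1}^r(\fra^{(i)}_j+\frm_x^d)^{\lambda_j}\big)\qquad(i\in\Lambda_d).
\]
So what must be shown is that, after passing to a suitable subsequence, one can let $d\to\infty$ on both sides at once.

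First I would dispose of the degenerate cases (here we assume $\dim X\ge2$; the one-dimensional case is elementary, since the only divisor over $X$ centered at $x$ is $x$ itself). If some $\widetilde{\fra}_j=0$, fix $q\gg0$ with $\mld_x(X,\frm_x^{q\lambda_j})<0$; by Proposition~\ref{properties_generic_limit} i) there are infinitely many $i$ with $\fra^{(i)}_j\subseteq\frm_x^q$, and for these Proposition~\ref{general_properties} i) gives $\mld_x(X,\prod_j(\fra^{(i)}_j)^{\lambda_j})<0$, hence $=-\infty$, matching the convention on the left, so one takes $S$ to be this set (refined so the other generic limits persist). If instead all $\widetilde{\fra}_j\ne0$ but $A:=\mld_{\widetilde{x}}(\widetilde{X},\widetilde{\fra})<0$, choose $E$ over $\widetilde{X}$ with $a_E(\widetilde{X},\widetilde{\fra})<0$ and $d\ge\max_j\ord_E(\widetilde{\fra}_j)$; then Proposition~\ref{general_properties} iv) makes the left side of the display negative, hence $-\infty$, and monotonicity forces the right side to be $-\infty$ for $i\in\Lambda_d$. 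Thus we may assume every $\widetilde{\fra}_j\ne0$ and $A\in\RR_{\ge0}$.

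For the main case, fix $E$ over $\widetilde{X}$ computing $A$ and set $b=\max_j\ord_E(\widetilde{\fra}_j)$. For every integer $d\ge b$, Proposition~\ref{general_properties} iv) gives $\mld_{\widetilde{x}}(\widetilde{X},\prod_j(\widetilde{\fra}_j+\frm_{\widetilde{x}}^d)^{\lambda_j})=A$, so the display yields $\mld_x(X,\prod_j(\fra^{(i)}_j+\frm_x^d)^{\lambda_j})=A$ for $i\in\Lambda_d$, and monotonicity then gives $\mld_x(X,\prod_j(\fra^{(i)}_j)^{\lambda_j})\le A$. It remains to get equality for infinitely many $i$. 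If not, then—the exponents lying in the finite set $\{\lambda_1,\dots,\lambda_r\}$—Kawakita's finiteness \cite[Theorem~1.2]{Kawakita2} lets us pass to a subsequence (still with $\widetilde{\fra}_j$ as generic limits, which we rename the original sequence) on which $\mld_x(X,\prod_j(\fra^{(i)}_j)^{\lambda_j})$ equals a constant $A'<A$. Here $A'\ne-\infty$: otherwise $\lct_x(X,\fra^{(i)})<1$ for all $i$, and since this set of log canonical thresholds satisfies ACC and has $\lct_{\widetilde{x}}(\widetilde{X},\widetilde{\fra})$ as a limit point (Proposition~\ref{properties_generic_limit} iii) and \cite[Theorem~4.2]{dFEM1}), we would get $\lct_{\widetilde{x}}(\widetilde{X},\widetilde{\fra})<1$, i.e.\ $A<0$. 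So $A'\in\RR_{\ge0}$, and by Conjecture~\ref{conj_main} there is an $\ell$ and, for each $i$, a divisor $E_i$ over $X$ computing $\mld_x(X,\fra^{(i)})=A'$ with $k_{E_i}\le\ell$. From $A'=k_{E_i}+1-\sum_j\lambda_j\ord_{E_i}(\fra^{(i)}_j)$, $A'\ge0$ and $\lambda_j>0$ we obtain the uniform bound $\ord_{E_i}(\fra^{(i)}_j)\le B:=(\ell+1)/\min_j\lambda_j$. Choosing one integer $d\ge\max(b,B)$, Proposition~\ref{general_properties} iv) applied to $E_i$ gives $\mld_x(X,\prod_j(\fra^{(i)}_j+\frm_x^d)^{\lambda_j})=A'$, whereas for $i\in\Lambda_d$ this same quantity equals $A$—contradicting $A'<A$. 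This produces the desired infinite set $S$.

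The crux, and the only place where an unproven input enters, is the last step, where Conjecture~\ref{conj_main} supplies a divisor $E_i$ computing $\mld_x(X,\fra^{(i)})$ of bounded discrepancy; without it $\ord_{E_i}(\fra^{(i)}_j)$ may grow with $i$, forcing the truncation level $d$ to grow and breaking the passage to the limit. In full generality this obstruction is intrinsic: by Theorem~\ref{thm_equivalence} the Generic Limit conjecture is in fact equivalent to Conjecture~\ref{conj_main}. On the other hand the reduction above is unconditional, so it proves the Generic Limit conjecture whenever $\dim X\le2$ (Theorem~\ref{dim2}) and whenever $(X,x)=(\AAA^n,0)$ and all the $\fra^{(i)}_j$ are monomial ideals (Theorem~\ref{monomial_case}), the latter also using that a generic limit of monomial ideals is again monomial.
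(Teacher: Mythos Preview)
Your reduction to Conjecture~\ref{conj_main} is the right idea and matches the paper's strategy: the paper does not prove this conjecture unconditionally but derives it from Conjecture~\ref{conj_main} via the intermediate Conjecture~\ref{ideal_adic} in the proof of Theorem~\ref{thm_equivalence}. The key computation---using the bound $k_E\le\ell$ from Conjecture~\ref{conj_main} together with log canonicity to bound each $\ord_E(\fra_j)$, and then invoking Proposition~\ref{general_properties}~iv) to equate the truncated and untruncated mlds---is exactly what the paper does.

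However, the organization of your main case has a gap. When, via Kawakita finiteness, you pass to an infinite subsequence on which the mld equals a constant $A'$, you assert parenthetically that the $\widetilde{\fra}_j$ remain generic limits of this subsequence. This is not automatic: the sets $\Lambda_d$ furnished by Remark~\ref{remark_properties_generic_limit} come from the generic limit machinery and are guaranteed to preserve the generic limit, but a subsequence obtained by pigeonhole on the values of the mld need not lie in this filter, and in the construction of \cite{dFEM1} the generic limit of such an arbitrary subsequence could a priori be different. Your contradiction at the end relies on applying Remark~\ref{remark_properties_generic_limit} to the renamed sequence with the \emph{same} $\widetilde{\fra}_j$ on the $\widetilde{X}$ side, so this point is essential. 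The same issue arises in your degenerate case $\widetilde{\fra}_j=0$, where ``refined so the other generic limits persist'' is asserted without justification.

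The fix is to drop the contradiction argument and use Conjecture~\ref{conj_main} uniformly in $i$ from the start. With $\ell$ as in Conjecture~\ref{conj_main} and $\epsilon=\min_j\lambda_j$, choose an integer $s>(\ell+1)/\epsilon$. For every $i$ with $\mld_x(X,\fra^{(i)})\ge0$ your own computation gives $\ord_{E_i}(\fra^{(i)}_j)<s$, so Proposition~\ref{general_properties}~iv) makes the untruncated and level-$s$ truncated mlds agree; a short argument (exactly the one in the paper's proof that Conjecture~\ref{conj_main} implies Conjecture~\ref{ideal_adic}) gives the same conclusion when the untruncated mld is negative. Thus the two mlds agree for \emph{all} $i$ simultaneously, and then $S=\Lambda_d$ for any single $d\ge\max(b,s)$ works directly. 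This is precisely the paper's route through Conjecture~\ref{ideal_adic}; the detour through Kawakita finiteness is unnecessary and is what introduces the unjustified step.
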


\begin{remark}\label{remark:oneineq}
%
Note that in the setting of the above conjecture, the inequality 
\[
\mld _{\widetilde{x}} (\widetilde{X}, \prod _{j = 1} ^r \widetilde{\fra}_j ^{\lambda _j}) \ge 
\mld _x (X, \prod _{j=1} ^r (\fra ^{(i)} _j)^{\lambda _j})
\]
can easily be guaranteed. Indeed, 
let $E$ be a divisor computing $\mld _{\widetilde{x}} (\widetilde{X}, \prod _{j = 1} ^r \widetilde{\fra}_j ^{\lambda _j})$. 
Take a positive integer $\ell$ such that $\ell\cdot \ord _E (\frm _{\widetilde{x}}) > \ord _E \widetilde{\fra}_j$ holds for each $j$. 
Then we have 
\[
\mld _{\widetilde{x}} (\widetilde{X}, \prod _{j = 1} ^r \widetilde{\fra}_j ^{\lambda _j}) = 
\mld _{\widetilde{x}} (\widetilde{X}, \prod _{j = 1} ^r (\widetilde{\fra}_j + \frm _{\widetilde{x}} ^{\ell} )^{\lambda _j}). 
\]
By Remark~\ref{remark_properties_generic_limit},
 there exists an infinite subset $S \subseteq \ZZ_{>0}$
such that the first condition in the conjecture holds and
\[
\mld _{\widetilde{x}} (\widetilde{X}, \prod _{j = 1} ^r (\widetilde{\fra}_j + \frm _{\widetilde{x}} ^{\ell} )^{\lambda _j})
=
\mld _x (X, \prod _{j=1} ^r (\fra ^{(i)} _j +  \frm _x ^{\ell} )^{\lambda _j})
\]
for every $i \in S$. 
Since 
\[
\mld _x (X, \prod _{j=1} ^r (\fra ^{(i)} _j +  \frm _x ^{\ell} )^{\lambda _j})
\ge
\mld _x (X, \prod _{j=1} ^r (\fra ^{(i)} _j)^{\lambda _j}), 
\]
we obtain the claimed inequality. 
\end{remark}

We now turn to the (uniform version of) Ideal-adic Semicontinuity conjecture for minimal log discrepancies.

\begin{conjecture}\label{ideal_adic}
Let $X$ be a klt variety and let $x\in X$ be a point defined by the ideal $\frm_x$. Given a finite set $I\subseteq \RR_{\geq 0}$, 
 there is a positive integer 
$s$ ${\rm (}$depending on $(X,x)$ and $I$${\rm )}$ such that the following holds: for every $\RR$-ideals
$\fra=\prod_{j=1}^r\fra_j^{\lambda_j}$ and $\frb=\prod_{j=1}^r\frb_j^{\lambda_j}$, with $\lambda_j\in I$ for all $j$, if $\fra_j+\frm_x^s=\frb_j+\frm_x^s$ for all $j$,
then $\mld_x(X,\fra)\geq 0$ if and only if $\mld_x(X,\frb)\geq 0$, and if this is the case\footnote{If this is not the case and $\dim(X)\geq 2$, then the two mlds are equal since they are both $-\infty$.}, then $\mld_x(X,\fra)=\mld_x(X,\frb)$.
\end{conjecture}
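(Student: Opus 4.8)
The plan is to derive Conjecture~\ref{ideal_adic} from Conjecture~\ref{conj_main}, using Proposition~\ref{prop_LC} to handle the non-log-canonical case unconditionally; since Conjecture~\ref{conj_main} is known when $\dim(X)=2$ and when $(X,x)=(\AAA^n,0)$ with the $\fra_j$ monomial (Theorems~\ref{dim2}, \ref{monomial_case}), this would in particular yield Conjecture~\ref{ideal_adic} unconditionally in those cases. Set $\lambda:=\min(I\cap\RR_{>0})$ (if $I\cap\RR_{>0}=\emptyset$ every $\RR$-ideal in question is trivial and there is nothing to prove), let $\ell'$ be the integer provided by Proposition~\ref{prop_LC} for $(X,x)$ and $I$, let $\ell$ be the integer whose existence is asserted by Conjecture~\ref{conj_main} for $(X,x)$ and $I$, and take any integer $s$ with $s>(\max(\ell,\ell')+1)/\lambda$. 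I claim this $s$ works. For an $\RR$-ideal $\fra=\prod_j\fra_j^{\lambda_j}$ with $\lambda_j\in I$, write $\widehat{\fra}:=\prod_j(\fra_j+\frm_x^s)^{\lambda_j}$, and discard the indices with $\lambda_j=0$, so that $\lambda_j\geq\lambda$ for all remaining $j$. The hypothesis of the conjecture says precisely $\widehat{\fra}=\widehat{\frb}$; from $\fra_j\subseteq\fra_j+\frm_x^s$ and Proposition~\ref{general_properties}(i) we get $\mld_x(X,\fra)\leq\mld_x(X,\widehat{\fra})$; and $\ord_E(\fra_j+\frm_x^s)=\min\{\ord_E(\fra_j),\,s\cdot\ord_E(\frm_x)\}$ for every divisor $E$, with $\ord_E(\frm_x)\geq 1$ whenever $c_X(E)=x$.

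The first step is the equivalence $\mld_x(X,\fra)\geq 0\Leftrightarrow\mld_x(X,\frb)\geq 0$, for which only Proposition~\ref{prop_LC} is needed; by the symmetry of the hypothesis it suffices to show that if $(X,\fra)$ is not log canonical at $x$ then neither is $(X,\frb)$. Proposition~\ref{prop_LC} furnishes a divisor $E$ over $X$ with $c_X(E)=x$, $k_E\leq\ell'$, and $\ord_E(\fra)>k_E+1$. I would check that $\ord_E(\widehat{\fra})>k_E+1$ as well: if $\ord_E(\fra_j)\leq s\cdot\ord_E(\frm_x)$ for all $j$ this is just $\ord_E(\widehat{\fra})=\ord_E(\fra)>k_E+1$, while if $\ord_E(\fra_{j_0})>s\cdot\ord_E(\frm_x)$ for some $j_0$ then already $\ord_E(\widehat{\fra})\geq\lambda_{j_0}\cdot s\cdot\ord_E(\frm_x)\geq\lambda s>\ell'+1\geq k_E+1$. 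Since $\widehat{\frb}=\widehat{\fra}$ and $\frb_j\subseteq\frb_j+\frm_x^s$, this gives $a_E(X,\frb)\leq a_E(X,\widehat{\frb})=a_E(X,\widehat{\fra})=k_E+1-\ord_E(\widehat{\fra})<0$, so $\mld_x(X,\frb)<0$. When both pairs fail to be log canonical at $x$ nothing further is claimed (and if $\dim(X)\geq 2$ both mlds equal $-\infty$).

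The second step treats the case $\mld_x(X,\fra)\geq 0$ and $\mld_x(X,\frb)\geq 0$, and here Conjecture~\ref{conj_main} enters. Pick a divisor $E$ computing $\mld_x(X,\fra)$ with $k_E\leq\ell$. Since $(X,\fra)$ is log canonical at $x$ we have $a_E(X,\fra)=\mld_x(X,\fra)\geq 0$, hence $\sum_j\lambda_j\ord_E(\fra_j)=\ord_E(\fra)\leq k_E+1\leq\ell+1$, so that $\ord_E(\fra_j)\leq(\ell+1)/\lambda<s\leq s\cdot\ord_E(\frm_x)$ for every $j$. Therefore $\ord_E(\fra_j+\frm_x^s)=\ord_E(\fra_j)$, so $\ord_E(\widehat{\fra})=\ord_E(\fra)$ and $a_E(X,\widehat{\fra})=a_E(X,\fra)=\mld_x(X,\fra)$; combined with $\mld_x(X,\widehat{\fra})\leq a_E(X,\widehat{\fra})$ and $\mld_x(X,\fra)\leq\mld_x(X,\widehat{\fra})$ this forces $\mld_x(X,\widehat{\fra})=\mld_x(X,\fra)$. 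Running the identical argument for $\frb$ gives $\mld_x(X,\widehat{\frb})=\mld_x(X,\frb)$, and then $\widehat{\fra}=\widehat{\frb}$ yields $\mld_x(X,\fra)=\mld_x(X,\widehat{\fra})=\mld_x(X,\widehat{\frb})=\mld_x(X,\frb)$, as wanted.

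The main obstacle is, unavoidably, the appeal to Conjecture~\ref{conj_main} in the second step: the computation above shows that the real content of ideal-adic semicontinuity is that $\mld_x$ is unchanged after replacing each $\fra_j$ by $\fra_j+\frm_x^s$, and this truncation is harmless exactly because a divisor computing $\mld_x(X,\fra)$ may be chosen with $k_E$ — equivalently $\ord_E(\fra_j)$ — bounded in terms of $(X,x)$ and $I$ only. Unconditionally one has only Theorem~\ref{thm_bound_ord_point}, which bounds $\ord_E(\frm_x)$ rather than $k_E$, and this does not suffice, since $\ord_E(\fra_j)$ can grow without bound over the family of $\RR$-ideals with exponents in $I$. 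Thus, beyond the unconditional equivalence of log canonicity established in the first step, the whole of Conjecture~\ref{ideal_adic} is concentrated in Conjecture~\ref{conj_main}, and proving it in a new case amounts to proving Conjecture~\ref{conj_main} there.
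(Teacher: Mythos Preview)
Your derivation of Conjecture~\ref{ideal_adic} from Conjecture~\ref{conj_main} is correct and matches the paper's own argument (the first implication in the proof of Theorem~\ref{thm_equivalence}): both choose $s>(\ell+1)/\epsilon$, use the bound $k_E\leq\ell$ on a computing divisor to force $\ord_E(\fra_j)<s\cdot\ord_E(\frm_x)$, and conclude via Proposition~\ref{general_properties} that $\mld_x(X,\fra)=\mld_x(X,\widehat{\fra})$. There is one small but genuine improvement in your organization: for the equivalence $\mld_x(X,\fra)\geq 0\Leftrightarrow\mld_x(X,\frb)\geq 0$ you invoke Proposition~\ref{prop_LC} rather than Conjecture~\ref{conj_main}, so this part of Conjecture~\ref{ideal_adic} is established unconditionally, whereas the paper uses Conjecture~\ref{conj_main} for both steps (it shows $\mld_x(X,\frb)\geq 0$ by taking a divisor $F$ with $k_F\leq\ell$ and $a_F(X,\frb)<0$ and deriving a contradiction). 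Your second step is also slightly more symmetric than the paper's, which first proves $\mld_x(X,\frb)\leq\mld_x(X,\fra)$ and then reverses roles, but this is purely cosmetic.
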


We now prove the result stated in the Introduction, saying that Conjectures~\ref{conj_main}, \ref{conjecture:generic_limit}, and \ref{ideal_adic}
are equivalent.

\begin{proof}[Proof of Theorem~\ref{thm_equivalence}]
We first show that Conjecture~\ref{conj_main} implies Conjecture~\ref{ideal_adic}. Suppose that Conjecture~\ref{conj_main} holds for $(X,x)$ and every finite set $I$.
Let $I$ be such a set. By assumption, there is a positive integer $\ell$ such that for every $\RR$-ideal
$\fra=\prod_{j=1}^r\fra_j^{\lambda_j}$ on $X$, with $\lambda_j\in I$ for all $j$,
there is a divisor
$E$ computing $\mld_x(X,\fra)$ with $k_E\leq\ell$. Let $\epsilon$ be the smallest nonzero element of $I$ and let $s$ be a positive integer that satisfies
$s>\frac{\ell+1}{\epsilon}$.
Suppose that $\fra$ and $\frb$ are as in Conjecture~\ref{ideal_adic}, with $\mld_x(X,\fra)\geq 0$.
We may and will assume that $\lambda_j>0$ for all $j$.

Let $\fra'_s=\prod_{j=1}^r(\fra_j+\frm_x^s)^{\lambda_j}$ and $\frb'_s=\prod_{j=1}^r(\frb_j+\frm_x^s)^{\lambda_j}$.
We assume that $\fra_j+\frm_x^s=\frb_j+\frm_x^s$ for all $j$, hence $\fra'_s=\frb'_s$.

Let $E$ be a divisor over $X$ which computes $\mld_x(X,\fra)$ such that $k_E\leq \ell$. 
In this case we have
$0\leq\mld_x(X,\fra)=k_E+1-\sum_{j=1}^r\lambda_j\cdot\ord_E(\fra_j)$, hence
$$\sum_{j=1}^r\lambda_j\cdot\ord_E(\fra_j)\leq\ell+1.$$
It follows from the choice of $\epsilon$ and $s$ that
$$s\cdot\ord_E(\frm_x)\geq s> \frac{\ell+1}{\lambda_j}\geq\ord_E(\fra_j)$$ for every $j$. Using Proposition~\ref{general_properties}, we obtain
$\mld_x(X,\fra)=\mld_x(X,\fra'_s)$. 
Since $\fra'_s=\frb'_s$, we have
$\mld_x(X,\frb'_s)=\mld_x(X,\fra'_s)=\mld_x(X,\fra)$ and since $\mld_x(X,\frb)\leq \mld_x(X,\frb'_s)$, we conclude that $\mld_x(X,\frb)\leq\mld_x(X,\fra)$. 

On the other hand, we have $\mld_x(X,\frb)\geq 0$. Indeed, if this is not the case, then by assumption we can find a divisor $F$ that computes
$\mld_x(X,\frb)$, with $k_F\leq \ell$. Therefore we have $k_F+1<\ord_F(\frb)$. We now use the fact that $\mld_x(X,\frb'_s)\geq 0$. First, this implies that
$\ord_F(\frb'_s)<\ord_F(\frb)$, and since we can write
$$\ord_F(\frb'_s)=\sum_{j=1}^r\lambda_j\cdot\min\{s\cdot\ord_F(\frm_x),\ord_F(\frb_j)\},$$
we conclude that there is $j$ such that $s\cdot\ord_F(\frm_x)<\ord_F(\frb_j)$. 
Second, it gives
$$\ell+1\geq k_F+1\geq \ord_F(\frb'_s)\geq \lambda_js\cdot \ord_F(\frm_x)\geq \epsilon s>\ell+1,$$
a contradiction. We thus conclude that $\mld_x(X,\frb)\geq 0$. 

 We can now run the same argument with the roles of $\fra$ and $\frb$ reversed, to conclude that $\mld_x(X,\frb)\geq\mld_x(X,\fra)$. Therefore $\mld_x(X,\frb)=\mld_x(X,\fra)$,
This completes the proof of the fact that Conjecture~\ref{conj_main} implies Conjecture~\ref{ideal_adic}. 

We now show that Conjecture~\ref{ideal_adic} implies Conjecture~\ref{conjecture:generic_limit}. 
Suppose that we are in the setting of Conjecture~\ref{conjecture:generic_limit} and let $s$ be the positive integer provided by
Conjecture~\ref{ideal_adic} for the set $I=\{\lambda_1,\ldots,\lambda_r\}$. 
By assumption, we have 
\[
\mld _x (X, \prod _{j=1} ^r (\fra ^{(i)} _j +  \frm _x ^{\ell} )^{\lambda _j})
=
\mld _x (X, \prod _{j=1} ^r (\fra ^{(i)} _j)^{\lambda _j})
\]
for every $\ell\geq s$ and every $i$. 
The argument in Remark~\ref{remark:oneineq} then implies that there is an infinite subset $S\subseteq\ZZ_{>0}$ that satisfies the first condition
in Conjecture~\ref{conjecture:generic_limit} and such that
\[
\mld _x (X, \prod _{j=1} ^r (\fra ^{(i)} _j +  \frm _x ^{\ell} )^{\lambda _j})
=
\mld _{\widetilde{x}} (\widetilde{X}, \prod _{j=1} ^r \widetilde{\fra}_j^{\lambda _j}), 
\]
for every $i\in S$. We thus have the conclusion in Conjecture~\ref{conjecture:generic_limit}.

Finally, we show that Conjecture~\ref{conjecture:generic_limit} implies Conjecture~\ref{conj_main}. 
Let $\lambda _1, \ldots , \lambda_r$ be the nonzero elements of the finite set $I$. 
If the assertion in Conjecture~\ref{conj_main} is not true, 
then for each positive integer $i$ there exist coherent ideals $\fra ^{(i)} _1, \ldots ,\fra ^{(i)} _r$
with the following property:
\begin{itemize}
\item $k_{E_i} \ge i$ holds for every divisor $E_i$ that computes $\mld _x (X, \prod _{j=1} ^r (\fra ^{(i)} _j)^{\lambda _j})$. 
\end{itemize}
We use the generic limit construction for $(\fra ^{(i)} _j) _{\ge 1}$ to obtain coherent ideal sheaves
$\widetilde{\fra}_1,\ldots,\widetilde{\fra}_r$ on $\widetilde{X}$. 
By applying successively\footnote{We need the first condition in Conjecture~\ref{conjecture:generic_limit}
in order to be able to apply Remark~\ref{remark:oneineq} to the resulting subsequences of ideals.}  Conjecture~\ref{conjecture:generic_limit} and Remark~\ref{remark:oneineq}, we get an infinite subset $S\subseteq\ZZ_{>0}$ such that

\begin{equation}\label{eq_thm_equivalence}
\mld _x (X, \prod _{j=1} ^r (\fra ^{(i)} _j +  \frm _x ^{\ell} )^{\lambda _j})
=
\mld _x (X, \prod _{j=1} ^r (\fra ^{(i)} _j)^{\lambda _j})
\end{equation}
for every $i\in S$. 
Let $\ell'$ be the $\ell$ provided by Theorem~\ref{thm_bound_ord_point}. 
It follows that for every $i\in S$, there is a divisor $E_i$ that computes
$\mld _x (X, \prod _{j=1} ^r (\fra ^{(i)} _j +  \frm _x ^{\ell} )^{\lambda _j})$
such that $\ord_{E_i} (\frm _x) \le \ell '$. 
The equality (\ref{eq_thm_equivalence}) implies that $E_i$  also computes  
$\mld _x (X, \prod _{j=1} ^r (\fra ^{(i)} _j)^{\lambda _j})$. 
Therefore we have 
\[
\ord _{E_i} (\fra ^{(i)} _j +  \frm _x ^{\ell}) = \ord _{E_i} (\fra ^{(i)} _j)
\]
for every $j$, hence
\[
\ord _{E_i} (\fra ^{(i)} _j) \le \ord _{E_i} (\frm _x^{\ell}) \le \ell \ell'. 
\]
If  $i\in S$ satisfies $i > \mld _x (X) - 1 + \ell \ell' \sum _{j=1} ^r \lambda _j$, 
then we have
\begin{align*}
a_{E_i} (X, \prod _{j=1} ^r (\fra ^{(i)} _j)^{\lambda _j}) 
&= k_{E_i} + 1 - \sum _{j=1} ^r \lambda _j \cdot \ord_{E_i} (\fra ^{(i)} _j) \\
&\ge i +1 - \ell \ell' \sum _{j=1} ^r \lambda _j > \mld _x (X). 
\end{align*}
This contradicts the fact that 
\[
a_{E_i} (X, \prod _{j=1} ^r (\fra ^{(i)} _j)^{\lambda _j}) 
= \mld _{x} (X, \prod _{j=1} ^r (\fra ^{(i)} _j)^{\lambda _j}) \le \mld _x (X). 
\]
We thus showed that Conjecture~\ref{conjecture:generic_limit} implies Conjecture~\ref{conj_main}, 
completing the proof of the theorem.
\end{proof}

\begin{remark}
By the equivalence of Conjectures~\ref{conj_main} and~\ref{conjecture:generic_limit}, 
Theorem~\ref{dim2} and Theorem~\ref{thm_acc} also follow from 
results of Kawakita, see  \cite[Proposition~4.8, Theorem~5.3]{Kawakita2}. 
\end{remark}

\providecommand{\bysame}{\leavevmode \hbox \o3em
{\hrulefill}\thinspace}

\begin{bibdiv}
 \begin{biblist}

\bib{Ambro}{article}{
   author={Ambro, Florin},
   title={On minimal log discrepancies},
   journal={Math. Res. Lett.},
   volume={6},
   date={1999},
   number={5-6},
   pages={573--580},
}


\bib{Birkar}{article}{
   author={Birkar, Caucher},
   title={Ascending chain condition for log canonical thresholds and
   termination of log flips},
   journal={Duke Math. J.},
   volume={136},
   date={2007},
   number={1},
   pages={173--180},
}

\bib{dFEM2}{article}{
   author={de Fernex, Tommaso},
   author={Ein, Lawrence},
   author={Musta{\c{t}}{\u{a}}, Mircea},
   title={Shokurov's ACC conjecture for log canonical thresholds on smooth
   varieties},
   journal={Duke Math. J.},
   volume={152},
   date={2010},
   number={1},
   pages={93--114},
}

\bib{dFEM1}{article}{
   author={de Fernex, Tommaso},
   author={Ein, Lawrence},
   author={Musta{\c{t}}{\u{a}}, Mircea},
   title={Log canonical thresholds on varieties with bounded singularities},
   conference={
      title={Classification of algebraic varieties},
   },
   book={
      series={EMS Ser. Congr. Rep.},
      publisher={Eur. Math. Soc., Z\"urich},
   },
   date={2011},
   pages={221--257},
}

\bib{dFM}{article}{
   author={de Fernex, Tommaso},
   author={Musta{\c{t}}{\u{a}}, Mircea},
   title={Limits of log canonical thresholds},
   journal={Ann. Sci. \'Ec. Norm. Sup\'er. (4)},
   volume={42},
   date={2009},
   number={3},
   pages={491--515},
}

\bib{ELM}{article}{
   author={Ein, Lawrence},
   author={Lazarsfeld, Robert},
   author={Musta{\c{t}}{\v{a}}, Mircea},
   title={Contact loci in arc spaces},
   journal={Compos. Math.},
   volume={140},
   date={2004},
   number={5},
   pages={1229--1244},
}

\bib{EMY}{article}{
   author={Ein, Lawrence},
   author={Musta{\c{t}}{\u{a}}, Mircea},
   author={Yasuda, Takehiko},
   title={Jet schemes, log discrepancies and inversion of adjunction},
   journal={Invent. Math.},
   volume={153},
   date={2003},
   number={3},
   pages={519--535},
}

\bib{HMX}{article}{
   author={Hacon, Christopher D.},
   author={McKernan, James},
   author={Xu, Chenyang},
   title={ACC for log canonical thresholds},
   journal={Ann. of Math. (2)},
   volume={180},
   date={2014},
   number={2},
   pages={523--571},
}

\bib{Kawakita4}{article}{
   author={Kawakita, Masayuki},
   title={Ideal-adic semi-continuity problem for minimal log discrepancies},
   journal={Math. Ann.},
   volume={356},
   date={2013},
   number={4},
   pages={1359--1377},
}

\bib{Kawakita3}{article}{
   author={Kawakita, Masayuki},
   title={Ideal-adic semi-continuity of minimal log discrepancies on
   surfaces},
   journal={Michigan Math. J.},
   volume={62},
   date={2013},
   number={2},
   pages={443--447},
}

\bib{Kawakita2}{article}{
   author={Kawakita, Masayuki},
   title={Discreteness of log discrepancies over log canonical triples on a
   fixed pair},
   journal={J. Algebraic Geom.},
   volume={23},
   date={2014},
   number={4},
   pages={765--774},
}


\bib{Kawakita1}{article}{
   author={Kawakita, Masayuki},
   title={A connectedness theorem over the spectrum of a formal power series ring},
   journal={Internat. J. Math.},
   volume={26},
   date={2015},
   number={11},
   pages={1550088, 27},
}


\bib{Kollar1}{article}{
   author={Koll{\'a}r, J{\'a}nos},
   title={Which powers of holomorphic functions are integrable?},
   eprint={arXiv:0805.0756v1}
}

\bib{Kollar3}{book}{
   author={Koll{\'a}r, J{\'a}nos},
   title={Singularities of the minimal model program},
   series={Cambridge Tracts in Mathematics},
   volume={200},
   note={With a collaboration of S\'andor Kov\'acs},
   publisher={Cambridge University Press, Cambridge},
   date={2013},
}

\bib{KollarMori}{book}{
   author={Koll{\'a}r, J{\'a}nos},
   author={Mori, Shigefumi},
   title={Birational geometry of algebraic varieties},
   series={Cambridge Tracts in Mathematics},
   volume={134},
   publisher={Cambridge University Press, Cambridge},
   date={1998},
   pages={viii+254},
}

\bib{Maclagan}{article}{
   author={Maclagan, Diane},
   title={Antichains of monomial ideals are finite},
   journal={Proc. Amer. Math. Soc.},
   volume={129},
   date={2001},
   number={6},
   pages={1609--1615}
}

\bib{Nakamura}{article}{
   author={Nakamura, Yusuke},
   title={On semi-continuity problems for minimal log discrepancies},
   journal={J. Reine Angew. Math.},
   volume={711},
   date={2016}, 
   pages={167--187}
}

\bib{Nak2}{article}{
   author={Nakamura, Yusuke},
   title={On minimal log discrepancies on varieties with fixed Gorenstein
   index},
   journal={Michigan Math. J.},
   volume={65},
   date={2016},
   number={1},
   pages={165--187},
}

\bib{Shokurov}{article}{
   author={Shokurov, V. V.},
   title={Letters of a bi-rationalist. V. Minimal log discrepancies and
   termination of log flips},
   language={Russian, with Russian summary},
   journal={Tr. Mat. Inst. Steklova},
   volume={246},
   date={2004},
   number={Algebr. Geom. Metody, Svyazi i Prilozh.},
   pages={328--351},
   translation={
      journal={Proc. Steklov Inst. Math.},
      date={2004},
      number={3 (246)},
      pages={315--336},
      issn={0081-5438},
   },
}

 \end{biblist}
\end{bibdiv}

\end{document}